\newif\ifexamples
\theoremstyle{plain}
\newtheorem{theorem}{Theorem}[section]
\newtheorem{proposition}[theorem]{Proposition}
\newtheorem{corollary}[theorem]{Corollary}
\newtheorem{lemma}[theorem]{Lemma}
\newtheorem{conjecture}[theorem]{Conjecture}
\theoremstyle{definition}
\theoremstyle{remark}
\newtheorem{remark}[theorem]{Remark}
\newtheorem{example}[theorem]{Example}
\numberwithin{equation}{section}
\definecolor{darkgreen}{RGB}{0,180,0}
\definecolor{darkred}{rgb}{0.7,0,0} 
\definecolor{darkblue}{rgb}{0,0,0.7} 
\newcommand{\defn}[1]{{\color{darkred}\emph{#1}}} 
\definecolor{UQgold}{RGB}{196, 158, 54} 
\definecolor{UQpurple}{RGB}{73, 7, 94} 
\newcommand{\ZZ}{\mathbb{Z}}
\newcommand{\CC}{\mathbb{C}}
\newcommand{\cc}{\mathbf{c}}
\newcommand{\xx}{\mathbf{x}}
\newcommand{\zz}{\mathbf{z}}
\newcommand{\z}{\mathbf{z}}
\newcommand{\mcT}{\mathcal{T}}
\newcommand{\ibar}{\overline{\imath}}
\newcommand{\bon}{\overline{1}}
\newcommand{\btw}{\overline{2}}
\newcommand{\bn}{\overline{n}}
\newcommand{\fgl}{\mathfrak{gl}}
\newcommand{\agl}{\widehat{\fgl}}
\newcommand{\states}{\mathfrak{S}}  
\newcommand{\Dop}{\pi}  
\newcommand{\Aop}{\overline{\Dop}}  
\newcommand{\univR}{\mathcal{R}}  
\newcommand{\wt}{\operatorname{wt}}  
\newcommand{\ch}{\operatorname{ch}}  
\newcommand{\GL}{\operatorname{GL}}
\newcommand{\Sp}{\operatorname{Sp}}
\newcommand{\SO}{\operatorname{SO}}
\newcommand{\key}{\operatorname{\mathbf{key}}}
\newcommand{\PP}{\mathcal{P}} 
\newcommand{\king}{\mathcal{K}} 
\newcommand{\sundaram}{\mathcal{S}} 
\newcommand{\uqsg}{U_q\bigl(\widehat{\mathfrak{gl}}(2n|1)\bigr)} 
\lstdefinelanguage{Sage}[]{Python}
{morekeywords={False,sage,True},sensitive=true}
\definecolor{dblackcolor}{rgb}{0.0,0.0,0.0}
\definecolor{dbluecolor}{rgb}{0.01,0.02,0.7}
\definecolor{dgreencolor}{rgb}{0.2,0.4,0.0}
\definecolor{dgraycolor}{rgb}{0.30,0.3,0.30}
\begin{document}
\title[Quasi-solvable models for Demazure atoms]{Quasi-solvable lattice models for $\Sp_{2n}$ and $\SO_{2n+1}$ Demazure atoms and characters}

\author{Valentin Buciumas}
\address[V.~Buciumas]{Department of Mathematical and Statistical Sciences, 
University of Alberta, 
Edmonton, AB T6G 2G1, 
Canada}
\email{valentin.buciumas@gmail.com}
\urladdr{https://sites.google.com/site/valentinbuciumas/}

\author{Travis Scrimshaw}
\address[T.~Scrimshaw]{School of Mathematics and Physics, 
The University of Queensland, 
St.\ Lucia, QLD, 4072, 
Australia}
\email{tcscrims@gmail.com}
\urladdr{https://people.smp.uq.edu.au/TravisScrimshaw/}

\keywords{Demazure polynomial, Demazure atom, colored lattice model, key tableau}
\subjclass[2010]{05E10, 16T25, 22E46, 82B23}

\begin{abstract}
We construct colored lattice models whose partition functions represent symplectic and odd orthogonal Demazure characters and atoms. 
We show that our lattice models are not solvable, but we are able to show the existence of sufficiently many solutions of the Yang--Baxter equation that allows us to compute functional equations for the corresponding partition functions.
From these functional equations, we determine that the partition function of our models are the Demazure atoms and characters for the symplectic and odd orthogonal Lie groups.
We coin our lattice models as quasi-solvable.
We use the natural bijection of admissible states in our models with Proctor patterns to give a right key algorithm for reverse King tableaux and Sundaram tableaux.
\end{abstract}

\maketitle

\section{Introduction}
\label{sec:intro}

The Yang--Baxter equation, also known as the star--triangle equation from its description in electrical networks~\cite{Kennelly99}, was first applied to two-dimensional statistical mechanical models by McGuire~\cite{McGuire64} to reduced the interaction of multiple particles down to pairwise scattering.
This was subsequently generalized to multiple species of particles by Yang~\cite{Yang67}.
Independently, Baxter also used the Yang--Baxter equation to show transfer matrices commute and solve the eight-vertex model~\cite{Baxter71,Baxter72} in what is now known as the train argument.
For additional history, we refer the reader to~\cite[Ch.~13]{McCoy10}.
The Yang--Baxter equation has since appeared in many diverse mathematical contexts beyond its origin in statistical mechanics.
For example, solutions to the Yang--Baxter equation (which correspond to Reidemeister III move) lead to knot invariants such as the Jones polynomial; see~\cite{Jones89,Turaev88} for a lattice model approach to the Jones polynomial.
A more recent application that has received significant attention is in the study of probabilistic models, where solutions to the Yang--Baxter equation control the dynamics such as in~\cite{AGS19,Borodin17,BorodinWheelerColored,BW20polymer,KMMO16,KMO15,KMO16,KMO16II,MS13,MS14,MS20}.

A lattice (or vertex) model is a finite grid where the edges are labeled and satisfy some local conditions around vertices, usually with some additional boundary conditions.
These local conditions are also assigned weights called Boltzmann weights, and the collection of these local conditions with their Boltzmann weights is called an $L$-matrix.
We extend the notation of a Boltzmann weight to any valid labeling of the grid, called a state of the model, by taking the product of all of the Boltzmann weights of the vertices.
We call a lattice model integrable or solvable if there exits an additional crossing called an $R$-matrix that can move past a pair of vertices, which is the $RLL$ form of the Yang--Baxter equation.
The Yang--Baxter equation then implies certain functional equations that are subsequently usually solved to determine the partition function of the lattice model, the generating function of the states of the model.

A now classical approach has been to construct a solvable lattice model based on the $R$-matrix isomorphism for $U_q(\agl_2)$-modules such that the partition function is a certain special function, such as a (symmetric) Grothendieck polynomial, a (spin) Hall--Littlewood polynomial, or a spherical Whittaker function (see, for example,~\cite{BBF11,BorodinWheelerColored,CGKM20,WZJ16,WZJ19}).
Indeed, the quantum groups structure ensures the $R$-matrix for the model satisfies the Yang--Baxter equation.
This approach has many fruitful consequences, yielding often simple proofs of certain combinatorial identities that are otherwise hard to prove directly; for example, Kuperberg's proof~\cite{Kuperberg96} counting the number of alternating sign matrices~\cite{Zeilberger96}.

A more recent approach has been instead use $U_q(\agl_n)$-representations to build solvable lattice models, which has the effect of introducing colors to the lattice model.
Moreover, the Yang--Baxter equation in terms of the solvable lattice model can be restated as a purely algebro-combinatorial statement without reference to a quantum group representation.
Both of these ideas have been quite fruitful by allowing authors to develop colored lattice models associated to certain special functions to break them into more atomic pieces and study their properties.
The first example is by Borodin and Wheeler~\cite{BorodinWheelerColored} with nonsymmetric spin Hall--Littlewood polynomials, which are the atoms of spin Hall--Littlewood polynomials~\cite{Borodin17,BP18}.
They also obtained the analogous result for Macdonald polynomials $P_{\lambda}$ in~\cite{BorodinWheelernsMac}, along with Garbali and Wheeler~\cite{GW18} for modified Macdonald polynomials $\widetilde{H}_{\lambda}$.
The first author and coauthors expanded on this by directly colorizing the classic five-vertex model for Schur functions in~\cite{BBBG19} to give $\GL_r$ Demazure atoms and keys.
The K-theory analog of this was done by the authors and Weber in~\cite{BSW20} using the model by Motegi and Sakai for Grothendieck polynomials~\cite{MS13,MS14} to give the first proof of a combinatorial interpretation of Lascoux atoms~\cite{BSW20}.

The goal of this paper is to do the analogous colorization of the model from~\cite{Gray17,Ivanov12} specialized for $\Sp_{2n}$ characters and a similar model for $\SO_{2n+1}$ characters.
This model is based off two different types of rows with Boltzmann weights called $\Gamma$-weights and $\Delta$-weights in~\cite{BBF11} with an additional U-turn ``vertex'' between pairs of successive rows called a $K$-matrix.
Such models would be solvable if there is an $R$-matrix for any pair of rows (so four $R$-matrices in total: $\Gamma\Gamma$, $\Gamma\Delta$, $\Delta\Gamma$, and $\Delta\Delta$) satisfying the Yang--Baxter equation.
However, our models are not solvable: for any natural colored version of the models in~\cite{Gray17,Ivanov12}, we can only construct at most three $R$-matrices that satisfy the Yang--Baxter equation.
Therefore, there does not exist a natural solvable colored analog of the models~\cite{Gray17,Ivanov12}.
Despite this setback, we are able to use the three solutions of the Yang--Baxter equation to compute the partition function of our models explicitly and show it is equal to a Demazure atom for $\Sp_{2n}$ and $\SO_{2n+1}$.
Because of this, we call our model \defn{quasi-solvable}.
Subsequently, our proofs required novel techniques to address this deficiency, which will likely be useful in generalizing this to other colored U-turn lattice models such as for Iwahori Whittaker functions and Hall--Littlewood polynomials for the symplectic group.

Before describing our methodology, let us discuss Demazure atoms and characters for the simple Lie groups $G = \Sp_{2n}, \SO_{2n+1}$.
The characters of irreducible finite-dimensional highest weight representations of $G$ with highest weight $\lambda$ are certain polynomials in $\ZZ[\zz^{\pm 1}]$, where $\zz = (z_1, \dotsc, z_n)$, that are invariant under the corresponding Weyl group $W$ of signed permutations (also known as the hyperoctohedral group).
By a classic formula of Demazure~\cite{Andersen85,Demazure74,Demazure74II,LMS79} (see also~\cite{L95-3,K93}), these can be described by applications of (isobaric) divided difference operators corresponding to any reduced word of the longest element $w_0 \in W$.
Since the divided difference operators satisfy the corresponding braid group relations, Matsumoto's theorem~\cite{Matsumoto64} implies we can define a partial character for any $w \in W$.
These partial characters are called Demazure characters $D_w(\lambda)$ and are characters for certain representations of the (standard) Borel subgroup $B \subseteq G$.
The divided difference operators $\Dop_i$ corresponding to the $i$-th simple reflection are also projections, and so we can define new operators $\Aop_i = \Dop_i - 1$ that also satisfy the braid relations.
These give rise to smaller polynomials called Demazure atoms $A_w(\lambda)$~\cite{LS82,LS83,LS90,Mason09} that encode the change as the length of the Weyl group element increases.
More precisely, a Demazure character is a sum of the atoms
\begin{equation}
\label{eq:Demazure_atom_sum_intro}
D_w(\lambda) = \sum_{v \leq w} A_v(\lambda),
\end{equation}
where $\leq$ is the (strong) Bruhat order.

Now we turn to our proofs.
A standard technique to produce the functional equations for the partition function of solvable (colored) lattice models is the train argument.
The train argument consists of adding an $R$-matrix to a pair of rows and then passing it through to the other side by repeated use of the Yang--Baxter equation.
This is what was used to show the functional equations satisfied the divided difference operator relations in~\cite{BBBG19,BFHTW20,BSW20,BS20} and the $z_i \leftrightarrow z_j$ symmetry in~\cite{EKLP92,Gray17,Ivanov12}.
In order to produce functional equations for U-turn lattice models like for the uncolored model, there are two additional type of relations needed.
The first is the reflection equation with a pair of $R$-matrices and $K$-matrices in a type BC braid relation $KRKR = RKRK$, which underlies the computations for the $z_i \leftrightarrow z_{i+1}$ (or type A) symmetry in~\cite{Gray17,Ivanov12}.
The other relation is the fish equation involving a single $R$-matrix and $K$-matrix, which is what was used in~\cite{Gray17,Ivanov12} to show the partition function satisfied the $z_i \leftrightarrow z_i^{-1}$ (or type BC) symmetry.

However, since we are unable to freely pass through one of our $R$-matrices, a novel approach is required.
To show the type A divided difference operator relations, our initial step is following~\cite{Gray17,Ivanov12} by applying a block $R$-matrix consisting of our four types of $R$-matrices on the left side of the mode.
We use the train argument for the three types of $R$-matrices that satisfy the Yang--Baxter equation to pass them to the right side of our model.
Then we use the reflection equation to bring two $R$-matrices together, which we can then remove by applying the unitary relation $R^2 = \beta \cdot 1$ for some constant $\beta$.
Finally, we pass the remaining $R$-matrix back to the left side and apply the corresponding unitary relation.
The result is the desired functional equation.

To obtain the type BC divided difference operator relation, we additionally have to get around the obstruction that the fish equation does not hold in the colored model.
We achieve this by first following~\cite{Ivanov12} and using a straightforward bijection of states to change the bottom row of our model from $\Delta$-weights to $\Gamma$-weights.
We then apply the train argument to bring the $\Gamma\Gamma$ $R$-matrix to the right, where we perform a direct analysis of the possible cases to obtain our functional equation.

Because our atom model is based off the atom colored models of~\cite{BBBG19,BSW20}, we apply the same small tweak to the colored model in~\cite{BSW20} to obtain a model for Demazure characters of $G = \Sp_{2n}, \SO_{2n+1}$.
For this tweaked model, the proof is entirely analogous to the case for the Demazure atoms.
Furthermore, the same combinatorial proof of~\cite[Thm.~3.6]{BSW20} of changing how the paths interact recovers Equation~\eqref{eq:Demazure_atom_sum_intro} (which can be turned around to compute the partition function combinatorially).
Consequently, we also obtain a new uncolored model for characters of irreducible $\SO_{2n+1}$-representations.

One application of our model is the computation of a (right) key for tableaux used in combinatorial descriptions of characters of irreducible $G$-representations.
More specifically, it is known~\cite{Gray17,Ivanov12} that the states of the uncolored model are in natural bijection with Proctor patterns~\cite{Proctor94}, which are naturally in bijection with certain tableaux.
However, the weight in our model has been twisted by $i \leftrightarrow n+1- i$ with the natural one from the Proctor patterns.
Thus our model naturally gives a key algorithm on \emph{reverse} King tableaux~\cite{King75,King76} for $G = \Sp_{2n}$ and Sundaram tableaux~\cite{Sundaram90} for $G = \SO_{2n+1}$.
This is in parallel to the models in~\cite{BBBG19,BSW20} with Gelfand--Tsetlin patterns and reverse semistandard tableaux because we can only use one particular ordering of the spectral parameters.
We conjecture that our key map agrees with the representation-theoretic description coming from the Kashiwara crystal structure~\cite{BS17,K93} described in~\cite{JL19}, the key map on Kashiwara--Nakashima (KN) tableaux~\cite{KN94} given in~\cite{Santos19}, and with the crystal structure defined recently on King tableaux by Lee~\cite{Lee19}.
A salient ingredient is a weight-preserving bijection from reverse King (resp.\ Sundaram) tableaux to regular King (resp.\ Sundaram) tableaux.
To relate the KN tableaux with the King/Sundaram tableaux, we expect a combination of the Sheats bijection~\cite{Sheats99} and repeated tableau switching algorithm iterations~\cite{BSS96} would yield the desired crystal isomorphism.

Our solutions to the Yang--Baxter equation have a partial interpretation in terms of $R$-matrices of the quantum supergroup $\uqsg$ (\cite{BazhanovShadrikov}, see also~\cite{Kojima13}) corresponding to the evaluation representation and its dual in the limit $q\to0$ (see Section~\ref{sec:quantum_groups}).
This interpretation gives evidence for the quantum generalization of our results to models representing Iwahori Whittaker functions and nonsymmetric Hall--Littlewood polynomials for the symplectic and odd orthogonal groups. 
In type A, a similar interpretation for lattice model $R$-matrices lead to a relation between quantum group $R$-matrices and $p$-adic intertwining integrals. 
We are able to show that two of our solutions to the Yang--Baxter equation come from $q \to 0$ limits of $\uqsg$ $R$-matrices.
We also explain why we are unable to give a complete quantum supergroup interpretation of our $R$-matrices. 
 
Generally speaking, $K$-matrices in U-turn lattice models should correspond to solutions of the reflection equation coming from quantum symmetric pairs~\cite{Kolbaffine,Letzter}.
Quantum symmetric pairs are certain coideal subalgebras of quantum groups that have recently been connected to many other areas of representations theory such as canonical basis, Schur--Weyl dualities, categorification, and geometry~\cite{BalagovicKolb,BaoWangInventiones,BaoWangAsterique,EhrigStroppel,FanLaiLiLuoWang}.
Unfortunately, there is not much known about $K$-matrices and quantum symmetric pairs corresponding to quantum supergroups like $\uqsg$. 
It would be interesting to relate our lattice models, especially their possible quantum generalizations, to such quantum symmetric pairs. 
This could lead to novel relations between the representation theories of symplectic and odd orthogonal $p$-adic groups and that of quantum symmetric pairs. 

This paper is organized as follows.
In Section~\ref{sec:background}, we give the necessary background on Demazure characters and atoms.
In Section~\ref{sec:colored_atoms}, we construct our lattice model for Demazure atoms and prove its quasi-solvability to give functional equations to prove our first main theorem.
We give a partial quantum group interpretation of our Boltzmann weights. 
In Section~\ref{sec:colored_Demazure}, we give a slightly modified quasi-solvable lattice model for Demazure characters and our second main theorem.
In Section~\ref{sec:proctor}, we relate the admissible states in our lattice models to Proctor patterns and use this to give an algorithm for a (right) key for for reverse King tableaux and Sundaram tableaux.

Shortly after this paper appeared on the arXiv, independent work by Zhong~\cite{Zhong21} on stochastic type C vertex models was also posted in which the colored model is a different quantization than the $R$-matrix for $\uqsg$ we utilize and is possibly a gauge transformation of our atom model when taking $q = 0$.

\subsection*{Acknowledgments}

The authors thank Ben Brubaker and Nathan Gray for many invaluable discussions. 
We thank Huafeng Zhang for discussions about $\uqsg$ $R$-matrices.
This work benefited from computations using \textsc{SageMath}~\cite{sage,combinat}.

V.~Buciumas was supported by the Australian Research Council DP180103150 and DP17010264, NSERC Discovery RGPIN-2019-06112 and the endowment of the M.V.~Subbarao Professorship in Number Theory.

\section{Background}
\label{sec:background}

We will start with a review of the theory of Demazure operators.
Let $\Phi$ be the root system and $\Lambda$ be the weight lattice of a complex reductive Lie group $G$ with maximal torus $T$.
Let $n$ be the rank of $\Phi$. 
We identify $\Lambda$ with the group $X^{\ast}(T)$ of rational characters of $T$.
For $\zz \in T$ and $\lambda \in \Lambda$, we denote by $\zz^{\lambda}$ the application of $\lambda$ to $\zz$.
Let $\mathcal{O}(T)$ be the set of polynomial functions on $T$, that is, finite linear combinations of the functions $\zz^{\lambda}$ for $\lambda \in \Lambda$.
Let $\Phi^{+}$ (resp.~$\Phi^{-}$) be the set of positive (resp.\ negative) roots, and let $\alpha_i$ ($i\in I = \{1, 2, \dotsc, n\}$) be the simple positive roots.
Let $\alpha_i^{\vee} \in X_{\ast} (T)$ denote the corresponding simple coroots and $s_i$ the corresponding simple reflections generating the Weyl group $W$.
The Weyl group acts on the weight lattice and therefore on the space $\mathcal{O}(T)$.
We shall denote this action by $w \cdot f(\z) := f(w\z)$.
For $w \in W$, let $\ell(w)$ denote the length of $w$, the smallest number of simple reflections such that $w = s_{i_1} \dotsm s_{i_{\ell}}$, which is called a reduced word for $w$.
Let $w_0$ be the long element of the Weyl group and $\leq$ denote the (strong) Bruhat order on $W$.
For more information about properties of the Weyl group, we the refer the reader to~\cite{Humphreys90}.

\subsection{Demazure characters and atoms}

Given $s_i$ a simple reflection, we can define the associated isobaric Demazure operator acting on $f \in \mathcal{O} (T)$ as
\begin{equation} \label{isobaricddefined}
\Dop_i f (\zz) =  \frac{f(\zz) - \zz^{-\alpha_i} f (s_i \zz)}{1 - \zz^{-\alpha_i}}.
\end{equation}
The numerator is divisible by the denominator, so the resulting function is again in $\mathcal{O}(T)$.

One can check that $\Dop^2_i = \Dop_i = s_i \Dop_i$.
Given any $\mu \in \Lambda$, set  $k = \langle \mu, \alpha_i^{\vee} \rangle$ so $s_i (\mu) = \mu - k\alpha_i$.
Then the action on the monomial $\zz^{\mu}$ is given explicitly by 
\begin{equation}
  \Dop_i \zz^{\mu} = \begin{cases}
    \zz^{\mu} +\zz^{\mu - \alpha_i} + \cdots +\zz^{s_i (\mu)}
        & \text{if } k \geqslant 0, \\
    0 & \text{if } k = - 1,\\
    - (\zz^{\mu + \alpha_i} +\zz^{\mu + 2 \alpha_i} + \cdots + \zz^{s_i (\mu + \alpha_i)})
        & \text{if } k < - 1.
  \end{cases}
\end{equation}
Define $\Aop_i := \Dop_i - 1$. Explicitly, we have
\begin{equation}\label{eq:partial}
\Aop_i f(\zz) := \frac{f(\zz) - f(s_i\zz)}{\zz^{\alpha_i} - 1}.
\end{equation}

Both $\Dop_i$ and $\Aop_i$ satisfy the braid relations.
Thus, for any $w \in W$, we can choose any reduced word $w = s_{i_1} \dotsm s_{i_k}$ to define $\Dop_w = \Dop_{i_1} \cdots \Dop_{i_k}$ and $\Aop_w = \Aop_{i_1} \cdots \Aop_{i_k}$ by Matsumoto's theorem~\cite{Matsumoto64}.
For $w = 1$, we set $\Dop_1 = \Aop_1 = 1$.

For $\lambda$ a dominant weight, let $\chi_{\lambda}$ denote the character of the irreducible representation $V_{\lambda}$ with highest weight $\lambda$.
The \defn{Demazure character formula} is the identity, for $\zz\in T$:
\[
\chi_{\lambda} (\zz) = \Dop_{w_0} \zz^{\lambda} .
\]
For a proof, see~\cite[Thm.~25.3]{Bump13}.
More generally for any Weyl group element $w$, we may consider $\Dop_w\zz^\lambda$ and $\Aop_w\zz^\lambda$.
These polynomials are called \defn{Demazure characters} and \defn{Demazure atoms}, respectively.
The following relation between the two is well-known.

\begin{theorem}
  \label{thm:lskeys}
  Let $f\in\mathcal{O}(T)$. Then
  \begin{equation}
    \label{phidemaz}
    \Dop_{w} f(\zz) = \sum_{y \leqslant w} \Aop_{y} f(\zz).
  \end{equation}
\end{theorem}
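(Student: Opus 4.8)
The plan is to induct on the length $\ell(w)$, peeling off a left descent and exploiting the elementary operator identities satisfied by the $\Aop_i$. The base case $w = 1$ is immediate: the interval $\{y : y \leq 1\}$ is the single element $1$, and $\Dop_1 f = f = \Aop_1 f$. For the inductive step I would choose a simple reflection $s_i$ that is a left descent of $w$, write $w = s_i v$ with $\ell(v) = \ell(w) - 1$ and $s_i v > v$, and use $\Dop_w = \Dop_i \Dop_v = (1 + \Aop_i)\Dop_v$ together with the inductive hypothesis $\Dop_v f = \sum_{y \leq v} \Aop_y f$.

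The first ingredient I need is the multiplication rule for the atom operators. Since $\Dop_i^2 = \Dop_i$, a one-line computation gives $\Aop_i^2 = (\Dop_i - 1)^2 = -\Aop_i$. Combined with the fact that $\Aop_y = \Aop_i \Aop_{y'}$ whenever a reduced word for $y$ begins with $s_i$ (so that $\Aop$ is computed along reduced words, as guaranteed by Matsumoto's theorem), this yields
\[
\Aop_i \Aop_y = \begin{cases} \Aop_{s_i y} & \text{if } s_i y > y, \\ -\Aop_y & \text{if } s_i y < y. \end{cases}
\]
Applying $\Aop_i$ to the inductive hypothesis and adding back $\Dop_v f$, the contributions indexed by $y \leq v$ with $s_i y < y$ cancel, and I am left with
\[
\Dop_w f = \sum_{\substack{y \leq v \\ s_i y > y}} \bigl(\Aop_y + \Aop_{s_i y}\bigr) f.
\]

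It therefore remains to identify this sum with $\sum_{z \leq w} \Aop_z f$, which reduces to the purely combinatorial claim that the Bruhat interval $[1, w]$ is the disjoint union $\bigsqcup_{y \leq v,\, s_i y > y} \{y, s_i y\}$. I would deduce this from the lifting property of the Bruhat order~\cite{Humphreys90}: because $s_i$ is a left descent of $w$, left multiplication by $s_i$ is a fixed-point-free involution of $[1, w]$ (if $z \leq w$ and $s_i z > z$ then $s_i z \leq w$, while if $s_i z < z$ then trivially $s_i z < z \leq w$), so $[1,w]$ breaks into two-element orbits $\{z, s_i z\}$. In each orbit the shorter element $y$ satisfies $s_i y > y$, and the same lifting property ($z \leq w$, $s_i z > z$, and $s_i w < w$ imply $z \leq s_i w = v$) shows that these representatives are exactly the $y \leq v$ with $s_i y > y$. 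This matches the index set above and completes the induction.

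I expect the main obstacle to be bookkeeping the Bruhat-order combinatorics cleanly --- in particular verifying that the involution $z \mapsto s_i z$ preserves $[1,w]$ and that its orbit representatives are precisely $\{y \leq v : s_i y > y\}$ --- since the operator manipulations themselves are routine once the multiplication rule for $\Aop_i$ is in hand.
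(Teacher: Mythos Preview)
Your argument is correct: the induction via $\Dop_i = 1 + \Aop_i$, the multiplication rule $\Aop_i \Aop_y = \Aop_{s_i y}$ or $-\Aop_y$ according to whether $s_i y > y$ or $s_i y < y$, and the decomposition of $[1,w]$ into $s_i$-orbits using the lifting property are all sound and assembled properly. This is the standard proof of the identity.

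The paper, however, does not actually prove this theorem in the background section; it simply records it as ``well-known'' and moves on. What the paper does later (Theorem~\ref{thm:sum_of_atoms}) is establish the identity in the special case $f = \zz^{\lambda}$ by an entirely different, combinatorial route: it exhibits a weight-preserving bijection between states of the Demazure-character lattice model $\states_{\lambda,w}$ and the disjoint union $\bigsqcup_{y \leq w} \overline{\states}_{\lambda,y}$ of atom models, by tracking how pairs of colored paths interact and reassigning the $\tt{a}_2$/$\tt{a}_2^{\dagger}$ and $K$-matrix crossings. So your approach is a direct operator-theoretic induction valid for arbitrary $f \in \mathcal{O}(T)$, whereas the paper's contribution on this point is a bijective lattice-model proof tailored to monomials $\zz^{\lambda}$, recovering the identity as a corollary of Theorems~\ref{thm:partition_atom_BC} and~\ref{thm:partition_demazure_BC}. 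Each has its virtue: yours is self-contained and general; the paper's gives a refined state-by-state matching that also identifies which atom each state of $\states_{\lambda,w}$ belongs to.
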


As a corollary we obtain the following decomposition of characters in terms of Demazure atoms. 

\begin{corollary}
  \label{cor:character_atom_decomp}
  We have
  \[
    \ch V_{\lambda} = \Dop_{w_0} \zz^{\lambda} = \sum_{y} \Aop_{y} \zz^{\lambda}.
  \]
\end{corollary}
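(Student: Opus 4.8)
The plan is to induct on the length $\ell(w)$, proving the operator identity $\Dop_w = \sum_{y \leqslant w} \Aop_y$ on $\mathcal{O}(T)$ (the statement for a fixed $f$ follows by evaluation). The base case $w = 1$ is immediate: here $\Dop_1 = 1 = \Aop_1$, and the only element $y \leqslant 1$ is $y = 1$, so both sides are the identity operator. For the inductive step I would fix a simple reflection $s_i$ with $\ell(s_i w) = \ell(w) + 1$ and write $\Dop_{s_i w} = \Dop_i \Dop_w$; this is legitimate because prepending $s_i$ to a reduced word for $w$ yields a reduced word for $s_i w$, and $\Dop$ is well defined on reduced words via Matsumoto's theorem.

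The crucial local computation is to evaluate $\Dop_i \Aop_y$. From $\Dop_i^2 = \Dop_i$ one obtains $\Aop_i^2 = -\Aop_i$, and writing $\Dop_i = \Aop_i + 1$ together with the braid relations gives
\[
\Dop_i \Aop_y = \begin{cases} \Aop_y + \Aop_{s_i y} & \text{if } s_i y > y, \\ 0 & \text{if } s_i y < y. \end{cases}
\]
Indeed, when $s_i y > y$ one uses $\Aop_i \Aop_y = \Aop_{s_i y}$, while when $s_i y < y$ one factors $\Aop_y = \Aop_i \Aop_{s_i y}$ and applies $\Aop_i^2 = -\Aop_i$. Applying $\Dop_i$ to the inductive hypothesis $\Dop_w = \sum_{y \leqslant w} \Aop_y$ and discarding the vanishing terms then yields
\[
\Dop_{s_i w} = \sum_{\substack{y \leqslant w \\ s_i y > y}} \left( \Aop_y + \Aop_{s_i y} \right).
\]

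It remains to identify the right-hand side with $\sum_{v \leqslant s_i w} \Aop_v$, and this is where the real content lies. The elements of $W$ partition into pairs $\{y, s_i y\}$ with $s_i y > y$, so I would prove the purely order-theoretic statement that, for $s_i w > w$, an element $v$ satisfies $v \leqslant s_i w$ if and only if the minimal element $y$ of the pair containing $v$ satisfies $y \leqslant w$. Concretely, I would verify this by the lifting property of the Bruhat order in its two forms: if $s_i v > v$ then $v \leqslant s_i w \iff v \leqslant w$ (using $w < s_i w$ for one direction and lifting, applied to $s_i(s_iw) < s_iw$, for the other); and if $s_i v < v$ then, since also $s_i(s_i w) = w < s_i w$, we have $v \leqslant s_i w \iff s_i v \leqslant w$. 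Re-indexing the sum $\sum_{v \leqslant s_i w} \Aop_v$ by these pairs then reproduces exactly the display above, closing the induction.

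The main obstacle is this Bruhat-combinatorial identity; everything else is a formal consequence of $\Aop_i^2 = -\Aop_i$ and the braid relations. The role of the lifting property is precisely to convert the ``doubling'' produced by $\Dop_i = \Aop_i + 1$ into the correct enlargement of the interval $\{y : y \leqslant w\}$ up to the interval $\{v : v \leqslant s_i w\}$.
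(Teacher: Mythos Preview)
Your argument is correct. The paper itself does not give a proof here: it states the Corollary as an immediate consequence of Theorem~\ref{thm:lskeys} (the identity $\Dop_w = \sum_{y \leqslant w} \Aop_y$), which it records as ``well-known'' without proof, together with the Demazure character formula $\chi_\lambda = \Dop_{w_0}\zz^{\lambda}$. What you have done is supply a full inductive proof of Theorem~\ref{thm:lskeys}, from which the Corollary then follows by taking $w = w_0$ and $f = \zz^{\lambda}$; the induction you outline---using $\Dop_i = \Aop_i + 1$, $\Aop_i^2 = -\Aop_i$, and the lifting property of Bruhat order to match the doubled sum with the interval below $s_i w$---is the standard argument for this identity. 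So your approach and the paper's are the same in spirit; you simply fill in the step the paper leaves to the literature.
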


\subsection{Signed permutations and the Weyl group action}

For the remainder of the paper, we will only consider Cartan types $BC$.
We identify the maximal torus $T$ with the space $(\CC^*)^n$, where $n$ is the rank of the group.
The Weyl group $W$ of type $B_n$ is isomorphic to the Weyl of type $C_n$, and it is known as the hyperoctohedral group.
It is generated by the simple reflections $s_i$ for $i \in \{1, \dotsc, n\}$ subject to the relations:
\begin{equation}
\begin{aligned}
s_{n}s_{n-1}s_ns_{n-1} &= s_{n-1} s_ns_{n-1}s_n, \\
s_i s_{i+1} s_i &= s_{i+1} s_i s_{i+1},  & &\mbox{ if } i < n-1\\
s_i s_j &= s_j s_i, && \mbox{ if } |i-j| \geq 2\\
s_i^2 &= 1.
\end{aligned}
\end{equation}
The Weyl group acts on elements $\zz \in (\CC^*)^n$ as follows:
\begin{subequations}
\begin{align}
s_i (\ldots, z_i, z_{i+1}, \ldots) & =  (\ldots, z_{i+1}, z_{i},\ldots),   && \text{if } i < n,\\
s_n (\ldots, z_{n-1}, z_{n}) & =  (\ldots, z_{n-1}, z_{n}^{-1}),  && \text{if } i = n.
\end{align}
\end{subequations}

The elements of $W$ can be explicitly described using \defn{signed permutations} of $n$, which are permutations of
\[
1 < 2 < \cdots < n < \bn < \cdots < \bon
\]
such that $w(i) = \overline{w(\ibar)}$.
Here we use the convention that $\overline{\ibar}=i$.
Thus we can determine a signed permutation by the image of $1 \leq i \leq n$.
The simple transposition for $i < n$  is given by $s_i = (i \; i+1)$, and $s_n$ sends $n \leftrightarrow \bn$.
An \defn{inversion} is a pair $1 \leq i < j \leq n$ such that $w(i) > w(j)$.
The longest element $w_0$ is given by the signed permutation $[\bon, \btw, \dotsc, \bn]$.

The subgroup of $W$ generated by $s_i$ for  $i<n$ is a subgroup isomorphic to the Weyl group of type $A$.
We shall denote the subgroup by $W^A$.

\subsection{Functional equations}

We now discuss the explicit functional equations for the Demazure characters and atoms that will be used to prove the main theorems in this paper.

We first consider Demazure characters.
Equation~\eqref{isobaricddefined} can be written explicitly as
\begin{subequations}
\label{eq:demazure_explicit}
\begin{align}
\Dop_i f(\zz) & :=
\dfrac{f(\zz) - z_i^{-1} z_{i+1} f(s_i\zz)}{1 - z_i^{-1} z_{i+1}}, && \text{if } i<n, \\
\Dop_n f(\zz) & :=
\dfrac{f(\zz) -  z_n^{-1}f(s_n\zz)}{1 - z_n^{-1}},  && \text{in type B,} \\
\Dop_n f(\zz) & :=
\dfrac{f(\zz) -  z_n^{-2}f(s_n\zz)}{1 - z_n^{-2}},  && \text{in type C.}
\end{align} 
\end{subequations}
Let us denote $D_w(\z,\lambda) := \Dop_w z^{\lambda}$.
Let $s_i$ be a simple reflection and $w \in W$ such that $\ell(s_i w)>\ell(w)$.
From Equations~\eqref{eq:demazure_explicit}, we deduce the following:
\begin{subequations}
\label{eq:demauze_functional}
\begin{align}
\label{eq:demauze_functional_A}
(z_i -z_{i+1})D_{s_iw}(\zz, \lambda) & = z_{i} D_{w}(\zz, \lambda) - z_{i+1} D_{w}(s_i\zz, \lambda),  && \text{if } i < n, \\
\label{eq:demauze_functional_B}
(z_n -1)D_{s_n w}(\zz, \lambda) & = z_n D_{w}(\zz, \lambda) - D_{w}(s_n\zz, \lambda),  && \text{in type B,} \\
\label{eq:demauze_functional_C}
(z_{n}^2 -1)D_{s_n w}(\zz, \lambda) & = z_n^2 D_{w}(\zz, \lambda) - D_{w}(s_n\zz, \lambda),  && \text{in type C.}
\end{align}
\end{subequations}

Next, we consider the Demazure atoms.
In this case, Equation~\eqref{eq:partial} can be rewritten as
\begin{subequations}
\label{eq:atom_explicit}
\begin{align}
\Aop_i f(\zz) & :=
\dfrac{f(\zz)-f(s_i\zz)}{z_i z_{i+1}^{-1}-1}, && \text{if } i<n, \\
\Aop_n f(\zz) & :=
\dfrac{f(\zz)-f(s_n\zz)}{z_n-1},  && \text{in type B,} \\
\Aop_n f(\zz) & :=
\dfrac{f(\zz)-f(s_n\zz)}{z_n^2-1},  && \text{in type C.}
\end{align} 
\end{subequations}
Let us denote $A_w(\z,\lambda) := \Aop_w z^{\lambda}$.
Let $s_i$ be a simple reflection and $w \in W$ such that $\ell(s_i w)>\ell(w)$.
We rewrite the equation above as
\begin{subequations}
\label{eq:atom_functional}
\begin{align}
\label{eq:atom_functional_A}
(z_i -z_{i+1})A_{s_iw}(\zz, \lambda) & = z_{i+1} \bigl( A_{w}(\zz, \lambda) - A_{w}(s_i\zz, \lambda) \bigr),  && \text{if } i < n, \\
\label{eq:atom_functional_B}
(z_n -1)A_{s_n w}(\zz, \lambda) & = A_{w}(\zz, \lambda) - A_{w}(s_n\zz, \lambda),  && \text{in type B,} \\
\label{eq:atom_functional_C}
(z_{n}^2 -1)A_{s_n w}(\zz, \lambda) & = A_{w}(\zz, \lambda) - A_{w}(s_n\zz, \lambda),  && \text{in type C.}
\end{align}
\end{subequations}

\section{Colored lattice models and Demazure atoms}
\label{sec:colored_atoms}

We will construct colored lattice models that represent Demazure atoms in Cartan type $B$ and $C$.
These models generalize the work in~\cite{BBBG19} where type $A$ Demazure atoms have been represented as partition functions of lattice models.
The current paper and~\cite{BBBG19} produce colored models that are a refinement of the $q=0$ uncolored models in~\cite{BBF11} (representing Schur polynomials) and~\cite{Ivanov12} (representing symplectic Schur polynomials), respectively.   
Our odd orthogonal model does not refine any pre-existing model.  

\begin{remark}
\label{rem:uncolored_semidual}
Our model is in fact a refinement of a semidual version of the model in~\cite{Ivanov12} obtained by interchanging $0 \leftrightarrow 1$ on each of the horizontal components.
This choice allows us to have a more natural description of our colored lattice model and helps with visualizing admissible states in the model by using colored paths.
\end{remark}

We work with fixed set $\cc = \{c_1 < c_2 < \cdots < c_n < \overline{c_n} < \cdots <\overline{c_1}\}$ of ordered colors.
We use the conventions $\overline{\overline{c}} := c$ and $c_{\ibar} := \overline{c}_i$.
For $w \in W$ we define $w \cc = (c_{w(1)}, c_{w(2)}, \dotsc, c_{w(n)},c_{w(\bn)},\dotsc,c_{w(\bon)})$ to be the set of colors permuted by $w$.
Explicitly, $s_i$ permutes the colors $c_i \leftrightarrow c_{i+1}$ and $\overline{c_i} \leftrightarrow \overline{c_{i+1}}$ and $s_n$ permutes the colors $c_n \leftrightarrow \overline{c_n}$.
The set $\{w\cc \mid w \in W\}$ will index the left boundary conditions of our model. 


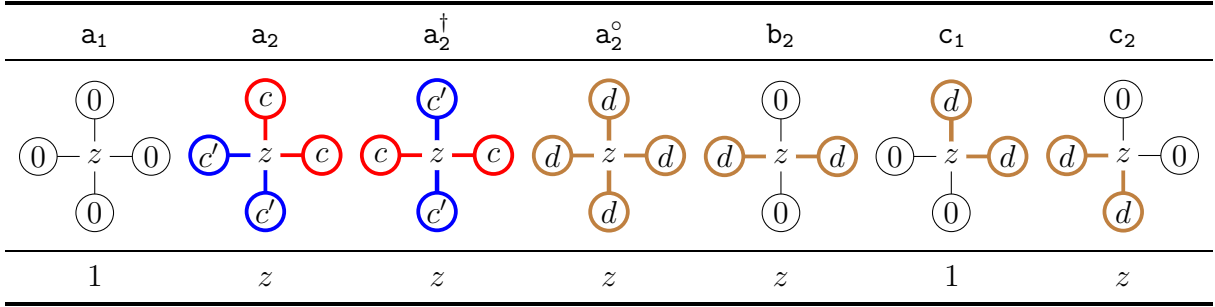
\begin{figure}
\[
\begin{array}{c@{\;\;}c@{\;\;}c@{\;\;}c@{\;\;}c@{\;\;}c@{\;\;}c}
\toprule
  \tt{a}_1&\tt{a}_2&\tt{a}^{\dagger}_2&\tt{a}^{\circ}_2&\tt{b}_2&\tt{c}_1&\tt{c}_2\\
\midrule
\begin{tikzpicture}
\coordinate (a) at (-.75, 0);
\coordinate (b) at (0, .75);
\coordinate (c) at (.75, 0);
\coordinate (d) at (0, -.75);
\coordinate (aa) at (-.75,.5);
\coordinate (cc) at (.75,.5);
\draw (a)--(0,0);
\draw (b)--(0,0);
\draw (c)--(0,0);
\draw (d)--(0,0);
\draw[fill=white] (a) circle (.25);
\draw[fill=white] (b) circle (.25);
\draw[fill=white] (c) circle (.25);
\draw[fill=white] (d) circle (.25);
\node at (0,1) { };
\node at (a) {$0$};
\node at (b) {$0$};
\node at (c) {$0$};
\node at (d) {$0$};
\path[fill=white] (0,0) circle (.2);
\node at (0,0) {$z$};
\end{tikzpicture}
& \begin{tikzpicture}
\coordinate (a) at (-.75, 0);
\coordinate (b) at (0, .75);
\coordinate (c) at (.75, 0);
\coordinate (d) at (0, -.75);
\coordinate (aa) at (-.75,.5);
\coordinate (cc) at (.75,.5);
\draw[line width=0.5mm, blue] (a)--(0,0);
\draw[line width=0.5mm, red] (b)--(0,0);
\draw[line width=0.5mm, red] (c)--(0,0);
\draw[line width=0.5mm, blue] (d)--(0,0);
\draw[line width=0.5mm, blue,fill=white] (a) circle (.25);
\draw[line width=0.5mm, red,fill=white] (b) circle (.25);
\draw[line width=0.5mm, red, fill=white] (c) circle (.25);
\draw[line width=0.5mm, blue, fill=white] (d) circle (.25);
\node at (0,1) { };
\node at (a) {$c'$};
\node at (b) {$c$};
\node at (c) {$c$};
\node at (d) {$c'$};
\path[fill=white] (0,0) circle (.2);
\node at (0,0) {$z$};
\end{tikzpicture}
& \begin{tikzpicture}
\coordinate (a) at (-.75, 0);
\coordinate (b) at (0, .75);
\coordinate (c) at (.75, 0);
\coordinate (d) at (0, -.75);
\coordinate (aa) at (-.75,.5);
\coordinate (cc) at (.75,.5);
\draw[line width=0.5mm, red] (a)--(0,0);
\draw[line width=0.6mm, blue] (b)--(0,0);
\draw[line width=0.5mm, red] (c)--(0,0);
\draw[line width=0.6mm, blue] (d)--(0,0);
\draw[line width=0.5mm, red,fill=white] (a) circle (.25);
\draw[line width=0.5mm, blue,fill=white] (b) circle (.25);
\draw[line width=0.5mm, red, fill=white] (c) circle (.25);
\draw[line width=0.5mm, blue, fill=white] (d) circle (.25);
\node at (0,1) { };
\node at (a) {$c$};
\node at (b) {$c'$};
\node at (c) {$c$};
\node at (d) {$c'$};
\path[fill=white] (0,0) circle (.2);
\node at (0,0) {$z$};
\end{tikzpicture}
& \begin{tikzpicture}
\coordinate (a) at (-.75, 0);
\coordinate (b) at (0, .75);
\coordinate (c) at (.75, 0);
\coordinate (d) at (0, -.75);
\coordinate (aa) at (-.75,.5);
\coordinate (cc) at (.75,.5);
\draw[line width=0.5mm, brown] (a)--(0,0);
\draw[line width=0.6mm, brown] (b)--(0,0);
\draw[line width=0.5mm, brown] (c)--(0,0);
\draw[line width=0.6mm, brown] (d)--(0,0);
\draw[line width=0.5mm, brown,fill=white] (a) circle (.25);
\draw[line width=0.5mm, brown,fill=white] (b) circle (.25);
\draw[line width=0.5mm, brown, fill=white] (c) circle (.25);
\draw[line width=0.5mm, brown, fill=white] (d) circle (.25);
\node at (0,1) { };
\node at (a) {$d$};
\node at (b) {$d$};
\node at (c) {$d$};
\node at (d) {$d$};
\path[fill=white] (0,0) circle (.2);
\node at (0,0) {$z$};
\end{tikzpicture}
& \begin{tikzpicture}
\coordinate (a) at (-.75, 0);
\coordinate (b) at (0, .75);
\coordinate (c) at (.75, 0);
\coordinate (d) at (0, -.75);
\coordinate (aa) at (-.75,.5);
\coordinate (cc) at (.75,.5);
\draw[line width=0.5mm, brown] (a)--(0,0);
\draw(b)--(0,0);
\draw[line width=0.5mm, brown] (c)--(0,0);
\draw (d)--(0,0);
\draw[line width=0.5mm,brown,fill=white] (a) circle (.25);
\draw[fill=white] (b) circle (.25);
\draw[line width=0.5mm,brown,fill=white] (c) circle (.25);
\draw[fill=white] (d) circle (.25);
\node at (0,1) { };
\node at (a) {$d$};
\node at (b) {$0$};
\node at (c) {$d$};
\node at (d) {$0$};
\path[fill=white] (0,0) circle (.2);
\node at (0,0) {$z$};
\end{tikzpicture}
& \begin{tikzpicture}
\coordinate (a) at (-.75, 0);
\coordinate (b) at (0, .75);
\coordinate (c) at (.75, 0);
\coordinate (d) at (0, -.75);
\coordinate (aa) at (-.75,.5);
\coordinate (cc) at (.75,.5);
\draw[line width=0.5mm, brown] (c)--(0,0);
\draw[line width=0.6mm, brown] (b)--(0,0);
\draw (a)--(0,0);
\draw (d)--(0,0);
\draw[line width=0.5mm,brown,fill=white] (c) circle (.25);
\draw[line width=0.5mm,brown,fill=white] (b) circle (.25);
\draw[fill=white] (a) circle (.25);
\draw[fill=white] (d) circle (.25);
\node at (0,1) { };
\node at (a) {$0$};
\node at (b) {$d$};
\node at (c) {$d$};
\node at (d) {$0$};
\path[fill=white] (0,0) circle (.2);
\node at (0,0) {$z$};
\end{tikzpicture}
& \begin{tikzpicture}
\coordinate (a) at (-.75, 0);
\coordinate (b) at (0, .75);
\coordinate (c) at (.75, 0);
\coordinate (d) at (0, -.75);
\coordinate (aa) at (-.75,.5);
\coordinate (cc) at (.75,.5);
\draw (c)--(0,0);
\draw (b)--(0,0);
\draw[line width=0.5mm, brown] (a)--(0,0);
\draw[line width=0.5mm, brown] (d)--(0,0);
\draw[fill=white] (c) circle (.25);
\draw[fill=white] (b) circle (.25);
\draw[line width=0.5mm,brown, fill=white] (a) circle (.25);
\draw[line width=0.5mm,brown, fill=white] (d) circle (.25);
\node at (0,1) { };
\node at (a) {$d$};
\node at (b) {$0$};
\node at (c) {$0$};
\node at (d) {$d$};
\path[fill=white] (0,0) circle (.2);
\node at (0,0) {$z$};
\end{tikzpicture}
\\ 
\midrule
1 &  z & z &z & z & 1 & z\\
\bottomrule
\end{array}
\]
\caption{The colored Boltzmann $\Gamma$-weights with ${\color{red} c} > {\color{blue} c'}$ and ${\color{brown} d}$ being any color.}
\label{fig:colored_gamma_weights}
\end{figure}

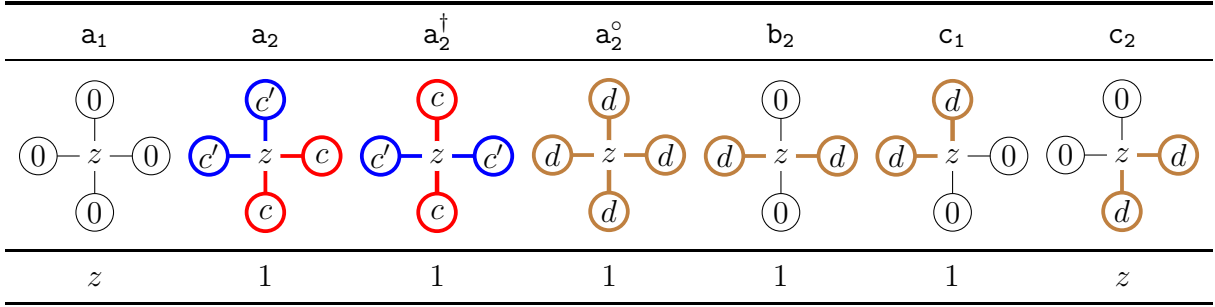
\begin{figure}
\[
\begin{array}{c@{\;\;}c@{\;\;}c@{\;\;}c@{\;\;}c@{\;\;}c@{\;\;}c}
\toprule
  \tt{a}_1&\tt{a}_2&\tt{a}^{\dagger}_2&\tt{a}^{\circ}_2&\tt{b}_2&\tt{c}_1&\tt{c}_2\\
\midrule
\begin{tikzpicture}
\coordinate (a) at (-.75, 0);
\coordinate (b) at (0, .75);
\coordinate (c) at (.75, 0);
\coordinate (d) at (0, -.75);
\coordinate (aa) at (-.75,.5);
\coordinate (cc) at (.75,.5);
\draw (a)--(0,0);
\draw (b)--(0,0);
\draw (c)--(0,0);
\draw (d)--(0,0);
\draw[fill=white] (a) circle (.25);
\draw[fill=white] (b) circle (.25);
\draw[fill=white] (c) circle (.25);
\draw[fill=white] (d) circle (.25);
\node at (0,1) { };
\node at (a) {$0$};
\node at (b) {$0$};
\node at (c) {$0$};
\node at (d) {$0$};
\path[fill=white] (0,0) circle (.2);
\node at (0,0) {$z$};
\end{tikzpicture}
& \begin{tikzpicture}
\coordinate (a) at (-.75, 0);
\coordinate (b) at (0, .75);
\coordinate (c) at (.75, 0);
\coordinate (d) at (0, -.75);
\coordinate (aa) at (-.75,.5);
\coordinate (cc) at (.75,.5);
\draw[line width=0.5mm, blue] (a)--(0,0);
\draw[line width=0.5mm, blue] (b)--(0,0);
\draw[line width=0.5mm, red] (c)--(0,0);
\draw[line width=0.5mm, red] (d)--(0,0);
\draw[line width=0.5mm, blue,fill=white] (a) circle (.25);
\draw[line width=0.5mm, blue,fill=white] (b) circle (.25);
\draw[line width=0.5mm, red, fill=white] (c) circle (.25);
\draw[line width=0.5mm, red, fill=white] (d) circle (.25);
\node at (0,1) { };
\node at (a) {$c'$};
\node at (b) {$c'$};
\node at (c) {$c$};
\node at (d) {$c$};
\path[fill=white] (0,0) circle (.2);
\node at (0,0) {$z$};
\end{tikzpicture}
& \begin{tikzpicture}
\coordinate (a) at (-.75, 0);
\coordinate (b) at (0, .75);
\coordinate (c) at (.75, 0);
\coordinate (d) at (0, -.75);
\coordinate (aa) at (-.75,.5);
\coordinate (cc) at (.75,.5);
\draw[line width=0.5mm, blue] (a)--(0,0);
\draw[line width=0.6mm, red] (b)--(0,0);
\draw[line width=0.5mm, blue] (c)--(0,0);
\draw[line width=0.6mm, red] (d)--(0,0);
\draw[line width=0.5mm, blue,fill=white] (a) circle (.25);
\draw[line width=0.5mm, red,fill=white] (b) circle (.25);
\draw[line width=0.5mm, blue, fill=white] (c) circle (.25);
\draw[line width=0.5mm, red, fill=white] (d) circle (.25);
\node at (0,1) { };
\node at (a) {$c'$};
\node at (b) {$c$};
\node at (c) {$c'$};
\node at (d) {$c$};
\path[fill=white] (0,0) circle (.2);
\node at (0,0) {$z$};
\end{tikzpicture}
& \begin{tikzpicture}
\coordinate (a) at (-.75, 0);
\coordinate (b) at (0, .75);
\coordinate (c) at (.75, 0);
\coordinate (d) at (0, -.75);
\coordinate (aa) at (-.75,.5);
\coordinate (cc) at (.75,.5);
\draw[line width=0.5mm, brown] (a)--(0,0);
\draw[line width=0.6mm, brown] (b)--(0,0);
\draw[line width=0.5mm, brown] (c)--(0,0);
\draw[line width=0.6mm, brown] (d)--(0,0);
\draw[line width=0.5mm, brown,fill=white] (a) circle (.25);
\draw[line width=0.5mm, brown,fill=white] (b) circle (.25);
\draw[line width=0.5mm, brown, fill=white] (c) circle (.25);
\draw[line width=0.5mm, brown, fill=white] (d) circle (.25);
\node at (0,1) { };
\node at (a) {$d$};
\node at (b) {$d$};
\node at (c) {$d$};
\node at (d) {$d$};
\path[fill=white] (0,0) circle (.2);
\node at (0,0) {$z$};
\end{tikzpicture}
& \begin{tikzpicture}
\coordinate (a) at (-.75, 0);
\coordinate (b) at (0, .75);
\coordinate (c) at (.75, 0);
\coordinate (d) at (0, -.75);
\coordinate (aa) at (-.75,.5);
\coordinate (cc) at (.75,.5);
\draw[line width=0.5mm, brown] (a)--(0,0);
\draw(b)--(0,0);
\draw[line width=0.5mm, brown] (c)--(0,0);
\draw (d)--(0,0);
\draw[line width=0.5mm,brown,fill=white] (a) circle (.25);
\draw[fill=white] (b) circle (.25);
\draw[line width=0.5mm,brown,fill=white] (c) circle (.25);
\draw[fill=white] (d) circle (.25);
\node at (0,1) { };
\node at (a) {$d$};
\node at (b) {$0$};
\node at (c) {$d$};
\node at (d) {$0$};
\path[fill=white] (0,0) circle (.2);
\node at (0,0) {$z$};
\end{tikzpicture}
& \begin{tikzpicture}
\coordinate (a) at (-.75, 0);
\coordinate (b) at (0, .75);
\coordinate (c) at (.75, 0);
\coordinate (d) at (0, -.75);
\coordinate (aa) at (-.75,.5);
\coordinate (cc) at (.75,.5);
\draw[line width=0.5mm, brown] (a)--(0,0);
\draw[line width=0.6mm, brown] (b)--(0,0);
\draw (c)--(0,0);
\draw (d)--(0,0);
\draw[fill=white] (c) circle (.25);
\draw[line width=0.5mm,brown,fill=white] (b) circle (.25);
\draw[line width=0.5mm,brown,fill=white] (a) circle (.25);
\draw[fill=white] (d) circle (.25);
\node at (0,1) { };
\node at (a) {$d$};
\node at (b) {$d$};
\node at (c) {$0$};
\node at (d) {$0$};
\path[fill=white] (0,0) circle (.2);
\node at (0,0) {$z$};
\end{tikzpicture}
& \begin{tikzpicture}
\coordinate (a) at (-.75, 0);
\coordinate (b) at (0, .75);
\coordinate (c) at (.75, 0);
\coordinate (d) at (0, -.75);
\coordinate (aa) at (-.75,.5);
\coordinate (cc) at (.75,.5);
\draw (a)--(0,0);
\draw (b)--(0,0);
\draw[line width=0.5mm, brown] (c)--(0,0);
\draw[line width=0.5mm, brown] (d)--(0,0);
\draw[fill=white] (a) circle (.25);
\draw[fill=white] (b) circle (.25);
\draw[line width=0.5mm,brown, fill=white] (c) circle (.25);
\draw[line width=0.5mm,brown, fill=white] (d) circle (.25);
\node at (0,1) { };
\node at (c) {$d$};
\node at (b) {$0$};
\node at (a) {$0$};
\node at (d) {$d$};
\path[fill=white] (0,0) circle (.2);
\node at (0,0) {$z$};
\end{tikzpicture}
\\ 
\midrule
z &  1 & 1 &1 & 1 & 1 & z\\
\bottomrule
\end{array}
\]
\caption{The colored Boltzmann $\Delta$-weights with ${\color{red} c} > {\color{blue} c'}$ and ${\color{brown} d}$ being any color.}
\label{fig:colored_delta_weights}
\end{figure}

\begin{figure}
\[
\begin{array}{c@{\;\;}c@{\;\;}c}
\toprule
  \tt{k}_1&\tt{k}_2&\tt{k}_3\\ 
\midrule
\begin{tikzpicture}
\coordinate (b) at (0, .75);
\coordinate (c) at (.75, 0);
\coordinate (d) at (0, -.75);
\coordinate (aa) at (-.75,.5);
\coordinate (cc) at (.75,.5);
\draw (b) to[out=0,in=90] (c);
\draw (c) to[out=-90,in=0] (d);
\draw[fill=white] (b) circle (.25);
\path[fill=white] (c) circle (.2);
\draw[fill=white] (d) circle (.25);
\node at (0,1) { };
\node at (b) {$0$};
\node at (c) {$z$};
\node at (d) {$0$};
\end{tikzpicture}
& \begin{tikzpicture}
\coordinate (b) at (0, .75);
\coordinate (c) at (.75, 0);
\coordinate (d) at (0, -.75);
\coordinate (aa) at (-.75,.5);
\coordinate (cc) at (.75,.5);
\draw[line width=0.6mm, UQgold] (b) to[out=0,in=90] (c);
\draw[line width=0.6mm, UQpurple] (c) to[out=-90,in=0] (d);
\draw[line width=0.6mm, UQgold,fill=white] (b) circle (.25);
\path[fill=white] (c) circle (.2);
\draw[line width=0.6mm, UQpurple,fill=white] (d) circle (.25);
\node at (0,1) { };
\node at (b) {$u$};
\node at (c) {$z$};
\node at (d) {$\overline{u}$};
\end{tikzpicture}
&\begin{tikzpicture}
\coordinate (b) at (0, .75);
\coordinate (c) at (.75, 0);
\coordinate (d) at (0, -.75);
\coordinate (aa) at (-.75,.5);
\coordinate (cc) at (.75,.5);
\draw[line width=0.6mm, UQpurple] (b) to[out=0,in=90] (c);
\draw[line width=0.6mm, UQpurple] (c) to[out=-90,in=0] (d);
\draw[line width=0.6mm, UQpurple,fill=white] (b) circle (.25);
\path[fill=white] (c) circle (.2);
\draw[line width=0.6mm, UQpurple,fill=white] (d) circle (.25);
\node at (0,1) { };
\node at (b) {$\overline{u}$};
\node at (c) {$z$};
\node at (d) {$\overline{u}$};
\end{tikzpicture}
\\ 
\midrule
z^{-2} &  1 & 1 \\ 
\bottomrule
\end{array}
\quad \quad \quad \quad \quad 
\begin{array}{c@{\;\;}c@{\;\;}c}
\toprule
  \tt{k}_1&\tt{k}_2&\tt{k}_3\\ 
\midrule
\begin{tikzpicture}
\coordinate (b) at (0, .75);
\coordinate (c) at (.75, 0);
\coordinate (d) at (0, -.75);
\coordinate (aa) at (-.75,.5);
\coordinate (cc) at (.75,.5);
\draw (b) to[out=0,in=90] (c);
\draw (c) to[out=-90,in=0] (d);
\draw[fill=white] (b) circle (.25);
\path[fill=white] (c) circle (.2);
\draw[fill=white] (d) circle (.25);
\node at (0,1) { };
\node at (b) {$0$};
\node at (c) {$z$};
\node at (d) {$0$};
\end{tikzpicture}
& \begin{tikzpicture}
\coordinate (b) at (0, .75);
\coordinate (c) at (.75, 0);
\coordinate (d) at (0, -.75);
\coordinate (aa) at (-.75,.5);
\coordinate (cc) at (.75,.5);
\draw[line width=0.6mm, UQgold] (b) to[out=0,in=90] (c);
\draw[line width=0.6mm, UQpurple] (c) to[out=-90,in=0] (d);
\draw[line width=0.6mm, UQgold,fill=white] (b) circle (.25);
\path[fill=white] (c) circle (.2);
\draw[line width=0.6mm, UQpurple,fill=white] (d) circle (.25);
\node at (0,1) { };
\node at (b) {$u$};
\node at (c) {$z$};
\node at (d) {$\bar{u}$};
\end{tikzpicture}
&\begin{tikzpicture}
\coordinate (b) at (0, .75);
\coordinate (c) at (.75, 0);
\coordinate (d) at (0, -.75);
\coordinate (aa) at (-.75,.5);
\coordinate (cc) at (.75,.5);
\draw[line width=0.6mm, UQpurple] (b) to[out=0,in=90] (c);
\draw[line width=0.6mm, UQpurple] (c) to[out=-90,in=0] (d);
\draw[line width=0.6mm, UQpurple,fill=white] (b) circle (.25);
\path[fill=white] (c) circle (.2);T
\draw[line width=0.6mm, UQpurple,fill=white] (d) circle (.25);
\node at (0,1) { };
\node at (b) {$\overline{u}$};
\node at (c) {$z$};
\node at (d) {$\overline{u}$};
\end{tikzpicture}
\\ 
\midrule
z^{-2} + z^{-1} &  1 & 1 \\ 
\bottomrule
\end{array}
\]
\caption{On the left (resp.\ right) we have the colored $^{\Gamma}_{\Delta}$ (resp.~$^{\Delta}_{\Gamma}$) $K$-matrix weights for type $C$ (resp.~$B$) with ${\color{UQgold} u} > {\color{UQpurple} \overline{u}}$.}
\label{fig:colored_K_weights_BC}
\end{figure}

Let us now explain the model. 
Consider a rectangular grid with $2n$ horizontal lines which we number from top to bottom and $m$ vertical lines numbered from right to left as in Figure~\ref{fig:ground_state}.
We call the odd numbered lines $\Gamma$ and the even numbered ones $\Delta$.
The intersection of a vertical and horizontal line is called a \defn{vertex}.
On the right, we connect the $\Gamma$ line $2i-1$ to the $\Delta$ line $2i$ by a \defn{U-turn}.

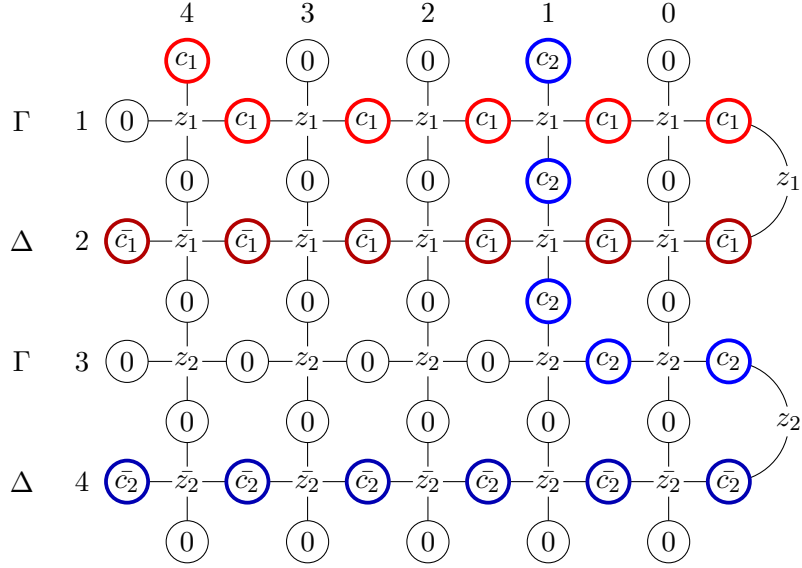
\begin{figure}
\begin{tikzpicture}[scale=0.80, font=\small]
    \foreach \y in {1,3,5,7}
        \draw (0,\y)--(10,\y);
    \foreach \x in {1,3,5,7,9}
      \draw (\x,0)--(\x,8);
    \draw (10,7) to[out=0,in=90] (11,6);
    \draw (11,6) to[out=-90,in=0] (10,5);
    \path[fill=white] (11,6) circle (.25);
    \node at (11,6) {$z_1$};
    \draw (10,3) to[out=0,in=90] (11,2);
    \draw (11,2) to[out=-90,in=0] (10,1);
    \path[fill=white] (11,2) circle (.25);
    \node at (11,2) {$z_2$};

    \foreach \x in {0,2,4,6,8,10}
    {
      \draw[fill=white] (\x,7) circle (.35);
      \draw[fill=white] (\x,5) circle (.35);
      \draw[fill=white] (\x,3) circle (.35);
      \draw[fill=white] (\x,1) circle (.35);
    }
    \foreach \x in {1,3,5,7,9}
    {
      \draw[fill=white] (\x,0) circle (.35);
      \draw[fill=white] (\x,2) circle (.35);
      \draw[fill=white] (\x,4) circle (.35);
      \draw[fill=white] (\x,6) circle (.35);
      \draw[fill=white] (\x,8) circle (.35);
      \path[fill=white] (\x,1) circle (.25);
      \node at (\x,1) {$\bar{z_2}$};
      \path[fill=white] (\x,3) circle (.25);
      \node at (\x,3) {$z_2$};
      \path[fill=white] (\x,5) circle (.25);
      \node at (\x,5) {$\bar{z_1}$};
      \path[fill=white] (\x,7) circle (.25);
      \node at (\x,7) {$z_1$};
    }

    \draw[line width=0.5mm,red,fill=white] (1,8) circle (.35); \node at (1,8) {$c_1$};
    \node at (3,8) {$0$};
    \node at (5,8) {$0$};
    \draw[line width=0.5mm,blue,fill=white] (7,8) circle (.35); \node at (7,8) {$c_2$};
    \node at (9,8) {$0$};
    \node at (1,6) {$0$};
    \node at (3,6) {$0$};
    \node at (5,6) {$0$};
    \draw[line width=0.5mm,blue,fill=white] (7,6) circle (.35); \node at (7,6) {$c_2$};
    \node at (9,6) {$0$};
    \node at (1,4) {$0$};
    \node at (3,4) {$0$};
    \node at (5,4) {$0$};
    \draw[line width=0.5mm,blue,fill=white] (7,4) circle (.35); \node at (7,4) {$c_2$};
    \node at (9,4) {$0$};
    \node at (1,2) {$0$};
    \node at (3,2) {$0$};
    \node at (5,2) {$0$};
    \node at (7,2) {$0$};
    \node at (9,2) {$0$};
    \node at (1,0) {$0$};
    \node at (3,0) {$0$};
    \node at (5,0) {$0$};
    \node at (7,0) {$0$};
    \node at (9,0) {$0$};
    \node at (0,7) {$0$};    
    \draw[line width=0.5mm,red,fill=white] (2,7) circle (.35); \node at (2,7) {$c_1$};
    \draw[line width=0.5mm,red,fill=white] (4,7) circle (.35); \node at (4,7) {$c_1$};
    \draw[line width=0.5mm,red,fill=white] (6,7) circle (.35); \node at (6,7) {$c_1$};
    \draw[line width=0.5mm,red,fill=white] (8,7) circle (.35); \node at (8,7) {$c_1$};
    \draw[line width=0.5mm,red,fill=white] (10,7) circle (.35); \node at (10,7) {$c_1$};
    \draw[line width=0.5mm,darkred,fill=white] (0,5) circle (.35); \node at (0,5) {$\bar{c_1}$};
    \draw[line width=0.5mm,darkred,fill=white] (2,5) circle (.35); \node at (2,5) {$\bar{c_1}$};
    \draw[line width=0.5mm,darkred,fill=white] (4,5) circle (.35); \node at (4,5) {$\bar{c_1}$};
    \draw[line width=0.5mm,darkred,fill=white] (6,5) circle (.35); \node at (6,5) {$\bar{c_1}$};
    \draw[line width=0.5mm,darkred,fill=white] (8,5) circle (.35); \node at (8,5) {$\bar{c_1}$};
    \draw[line width=0.5mm,darkred,fill=white] (10,5) circle (.35); \node at (10,5) {$\bar{c_1}$};
    \node at (0,3) {$0$};
    \node at (2,3) {$0$};
    \node at (4,3) {$0$};
    \node at (6,3) {$0$};
    \draw[line width=0.5mm,blue,fill=white] (8,3) circle (.35); \node at (8,3) {$c_2$};
    \draw[line width=0.5mm,blue,fill=white] (10,3) circle (.35); \node at (10,3) {$c_2$};
    \draw[line width=0.5mm,darkblue,fill=white] (0,1) circle (.35); \node at (0,1) {$\bar{c_2}$};
    \draw[line width=0.5mm,darkblue,fill=white] (2,1) circle (.35); \node at (2,1) {$\bar{c_2}$};
    \draw[line width=0.5mm,darkblue,fill=white] (4,1) circle (.35); \node at (4,1) {$\bar{c_2}$};
    \draw[line width=0.5mm,darkblue,fill=white] (6,1) circle (.35); \node at (6,1) {$\bar{c_2}$};
    \draw[line width=0.5mm,darkblue,fill=white] (8,1) circle (.35); \node at (8,1) {$\bar{c_2}$};
    \draw[line width=0.5mm,darkblue,fill=white] (10,1) circle (.35); \node at (10,1) {$\bar{c_2}$};
    \node at (1.00,8.8) {$ 4$};
    \node at (3.00,8.8) {$ 3$};
    \node at (5.00,8.8) {$ 2$};
    \node at (7.00,8.8) {$ 1$};
    \node at (9.00,8.8) {$ 0$};
    \node at (-.75,1) {$ 4$};
    \node at (-.75,3) {$ 3$};
    \node at (-.75,5) {$ 2$};
    \node at (-.75,7) {$ 1$};
    \node at (-1.75,1) {$ \Delta$};
    \node at (-1.75,3) {$ \Gamma$};
    \node at (-1.75,5) {$ \Delta$};
    \node at (-1.75,7) {$ \Gamma$};
\end{tikzpicture}
\caption{The unique admissible state in $\overline{\states}_{\lambda,w}$ for $\lambda = (3,1)$ and $w=1$. We use the convention $\bar{z}=z^{-1}$. The top boundary condition will consist of colors on columns $(4,1)=\lambda+\rho$.}
\label{fig:ground_state}
\end{figure}

Each $\Gamma$ line $2i-1$ is assigned the parameter $z_i \in \CC^*$ and $\Delta$ line $2i$ is assigned the parameter $z^{-1}_i \in \CC^*$.
We also assign the U-turn from line $2i-1$ to $2i$ the parameter $z_i$.
One can think of $\zz := (z_1,z_1^{-1},\dotsc,z_n,z_n^{-1})$ as living in the torus of $\Sp_{2n}(\CC)$ or $\SO_{2n+1}(\CC)$.
An \defn{interior edge} connects two vertices in the model, while an \defn{outer edge} (or a boundary edge) is attached to one vertex alone. 

To each edge we may assign a \defn{spin}, that is an element $c \in \cc \sqcup \{0\}$.
The \defn{Boltzmann weight} of a vertex (resp.\ U-turn) is a function that assigns a complex number to each assignment of spins to the edges of a vertex (resp.\ U-turn) that depends on the assigned parameter.
The collection of vertices (resp.\ U-turns) and their Boltzmann weights is called an \defn{$L$-matrix} (resp.\ \defn{$K$-matrix}).
The Boltzmann weights for $\Gamma$, $\Delta$, and U-turn vertices are given in Figures~\ref{fig:colored_gamma_weights},~\ref{fig:colored_delta_weights} and~\ref{fig:colored_K_weights_BC}, respectively.
Each weight that is not portrayed in the figures mentioned in this paragraph is considered to be $0$.
Both the $B$-model and the $C$-model use the same $\Gamma$ and $\Delta$ weights, while the U-turn weights are different.

Our system has fixed spins on the boundary that depend on $w \in W$ and $\lambda \in \Lambda$.
The bottom edges are labeled by $0$, the left $\Gamma$ edges are labeled by $0$.
The left $\Delta$ edges are labeled by $w w_0 \cc$ from top to bottom, and the top edges are labeled by $c_n, \dotsc, c_1$ (from right to left) in places $\lambda+\rho$, where $\rho = (n-1, n-2, \dotsc, 0)$.
The rest of the boundary edges are assigned spin $0$.
See Figure~\ref{fig:ground_state} for an example where $w=1$, $\lambda = (3,1)$ and $\lambda+\rho = (4,1)$. 
We denote such a model by $\overline{\states}^X_{\lambda,w}$, for $X \in \{B, C\}$.

An assignment of spins to the inner edges is called a \defn{state} of the system. 
The \defn{weight} of a state is the product over all vertices of the weights of each vertex.
A state is called \defn{admissible} if its weight is non-zero. 
We will often simply write $\overline{\states}_{\lambda,w} = \overline{\states}^X_{\lambda,w}$ since the states of the two models are the same and the Boltzmann weights only differ in $\tt{k}_1$.
The \defn{partition function} $ Z(\overline{\states}^X_{\lambda,w}; \zz)$ is the sum of the weights of the states over all states of the system with boundary conditions determined by $w$, $\lambda$, and the parameters $\zz$.

\begin{remark}
The $z + 1$ ratio between the $\tt{k}_1$ $K$-matrix entry in types $B$ and $C$ is exactly the ratio between the type $C_n$ and $B_n$ characters in rank $n = 1$.
\end{remark}

\begin{figure}
\[
\begin{array}{c@{\hspace{30pt}}c@{\hspace{30pt}}c@{\hspace{30pt}}c}
\toprule
\begin{tikzpicture}[scale=0.7]
\draw (0,0) to [out = 0, in = 180] (2,2);
\draw (0,2) to [out = 0, in = 180] (2,0);
\draw[fill=white] (0,0) circle (.35);
\draw[fill=white] (0,2) circle (.35);
\draw[fill=white] (2,0) circle (.35);
\draw[fill=white] (2,2) circle (.35);
\node at (0,0) {$0$};
\node at (0,2) {$0$};
\node at (2,2) {$0$};
\node at (2,0) {$0$};
\path[fill=white] (1,1) circle (.3);
\node at (1,1) {$z_i,z_j$};
\end{tikzpicture}&
\begin{tikzpicture}[scale=0.7]
\draw (0,0) to [out = 0, in = 180] (2,2);
\draw (0,2) to [out = 0, in = 180] (2,0);
\draw[fill=white] (0,0) circle (.35);
\draw[line width=0.5mm, brown, fill=white] (0,2) circle (.35);
\draw[line width=0.5mm, brown, fill=white] (2,2) circle (.35);
\draw[fill=white] (2,0) circle (.35);
\node at (0,0) {$0$};
\node at (0,2) {$d$};
\node at (2,2) {$d$};
\node at (2,0) {$0$};
\path[fill=white] (1,1) circle (.3);
\node at (1,1) {$z_i,z_j$};
\end{tikzpicture}&
\begin{tikzpicture}[scale=0.7]
\draw (0,0) to [out = 0, in = 180] (2,2);
\draw (0,2) to [out = 0, in = 180] (2,0);
\draw[line width=0.5mm, brown, fill=white] (0,0) circle (.35);
\draw[fill=white] (0,2) circle (.35);
\draw[line width=0.5mm, brown, fill=white] (2,2) circle (.35);
\draw[fill=white] (2,0) circle (.35);
\node at (0,0) {$d$};
\node at (0,2) {$0$};
\node at (2,2) {$d$};
\node at (2,0) {$0$};
\path[fill=white] (1,1) circle (.3);
\node at (1,1) {$z_i,z_j$};
\end{tikzpicture}&
\begin{tikzpicture}[scale=0.7]
\draw (0,0) to [out = 0, in = 180] (2,2);
\draw (0,2) to [out = 0, in = 180] (2,0);
\draw[line width=0.5mm, brown, fill=white] (0,0) circle (.35);
\draw[fill=white] (0,2) circle (.35);
\draw[fill=white] (2,2) circle (.35);
\draw[line width=0.5mm, brown, fill=white] (2,0) circle (.35);
\node at (0,0) {$d$};
\node at (0,2) {$0$};
\node at (2,2) {$0$};
\node at (2,0) {$d$};
\path[fill=white] (1,1) circle (.3);
\node at (1,1) {$z_i,z_j$};
\end{tikzpicture}\\
   \midrule
   z_j & z_j & z_i - z_j & z_i \\
   \midrule
\begin{tikzpicture}[scale=0.7]
\draw (0,0) to [out = 0, in = 180] (2,2);
\draw (0,2) to [out = 0, in = 180] (2,0);
\draw[line width=0.5mm, blue, fill=white] (0,0) circle (.35);
\draw[line width=0.5mm, red, fill=white] (0,2) circle (.35);
\draw[line width=0.5mm, red, fill=white] (2,2) circle (.35);
\draw[line width=0.5mm, blue, fill=white] (2,0) circle (.35);
\node at (0,0) {$c'$};
\node at (0,2) {$c$};
\node at (2,2) {$c$};
\node at (2,0) {$c'$};
\path[fill=white] (1,1) circle (.3);
\node at (1,1) {$z_i,z_j$};
\end{tikzpicture}&
\begin{tikzpicture}[scale=0.7]
\draw (0,0) to [out = 0, in = 180] (2,2);
\draw (0,2) to [out = 0, in = 180] (2,0);
\draw[line width=0.5mm, red, fill=white] (0,0) circle (.35);
\draw[line width=0.5mm, blue, fill=white] (0,2) circle (.35);
\draw[line width=0.5mm, blue, fill=white] (2,2) circle (.35);
\draw[line width=0.5mm, red, fill=white] (2,0) circle (.35);
\node at (0,0) {$c$};
\node at (0,2) {$c'$};
\node at (2,2) {$c'$};
\node at (2,0) {$c$};
\path[fill=white] (1,1) circle (.3);
\node at (1,1) {$z_i,z_j$};
\end{tikzpicture}&
\begin{tikzpicture}[scale=0.7]
\draw (0,0) to [out = 0, in = 180] (2,2);
\draw (0,2) to [out = 0, in = 180] (2,0);
\draw[line width=0.5mm, red, fill=white] (0,0) circle (.35);
\draw[line width=0.5mm, blue, fill=white] (0,2) circle (.35);
\draw[line width=0.5mm, red, fill=white] (2,2) circle (.35);
\draw[line width=0.5mm, blue, fill=white] (2,0) circle (.35);
\node at (0,0) {$c$};
\node at (0,2) {$c'$};
\node at (2,2) {$c$};
\node at (2,0) {$c'$};
\path[fill=white] (1,1) circle (.3);
\node at (1,1) {$z_i,z_j$};
\end{tikzpicture}&
\begin{tikzpicture}[scale=0.7]
\draw (0,0) to [out = 0, in = 180] (2,2);
\draw (0,2) to [out = 0, in = 180] (2,0);
\draw[line width=0.5mm, brown, fill=white] (0,0) circle (.35);
\draw[line width=0.5mm, brown, fill=white] (0,2) circle (.35);
\draw[line width=0.5mm, brown, fill=white] (2,2) circle (.35);
\draw[line width=0.5mm, brown, fill=white] (2,0) circle (.35);
\node at (0,0) {$d$};
\node at (0,2) {$d$};
\node at (2,2) {$d$};
\node at (2,0) {$d$};
\path[fill=white] (1,1) circle (.3);
\node at (1,1) {$z_i,z_j$};
\end{tikzpicture}\\
   \midrule
   z_i & z_j & z_i - z_j & z_i \\
   \bottomrule
\end{array}
\]
\caption{The colored $R^{\Gamma}_{\Gamma}$-matrix with ${\color{red} c} > {\color{blue} c'}$ and ${\color{brown} d}$ being any color.}
\label{fig:colored_R_matrix_GG}
\end{figure}


\begin{figure}
\[
\begin{array}{c@{\hspace{30pt}}c@{\hspace{30pt}}c@{\hspace{30pt}}c}
\toprule
\begin{tikzpicture}[scale=0.7]
\draw (0,0) to [out = 0, in = 180] (2,2);
\draw (0,2) to [out = 0, in = 180] (2,0);
\draw[fill=white] (0,0) circle (.35);
\draw[fill=white] (0,2) circle (.35);
\draw[fill=white] (2,0) circle (.35);
\draw[fill=white] (2,2) circle (.35);
\node at (0,0) {$0$};
\node at (0,2) {$0$};
\node at (2,2) {$0$};
\node at (2,0) {$0$};
\path[fill=white] (1,1) circle (.3);
\node at (1,1) {$z_i,z_j$};
\end{tikzpicture}&
\begin{tikzpicture}[scale=0.7]
\draw (0,0) to [out = 0, in = 180] (2,2);
\draw (0,2) to [out = 0, in = 180] (2,0);
\draw[fill=white] (0,0) circle (.35);
\draw[line width=0.5mm, brown, fill=white] (0,2) circle (.35);
\draw[line width=0.5mm, brown, fill=white] (2,2) circle (.35);
\draw[fill=white] (2,0) circle (.35);
\node at (0,0) {$0$};
\node at (0,2) {$d$};
\node at (2,2) {$d$};
\node at (2,0) {$0$};
\path[fill=white] (1,1) circle (.3);
\node at (1,1) {$z_i,z_j$};
\end{tikzpicture}&
\begin{tikzpicture}[scale=0.7]
\draw (0,0) to [out = 0, in = 180] (2,2);
\draw (0,2) to [out = 0, in = 180] (2,0);
\draw[fill=white] (0,0) circle (.35);
\draw[line width=0.5mm, brown, fill=white] (0,2) circle (.35);
\draw[fill=white] (2,2) circle (.35);
\draw[line width=0.5mm, brown, fill=white] (2,0) circle (.35);
\node at (0,0) {$0$};
\node at (0,2) {$d$};
\node at (2,2) {$0$};
\node at (2,0) {$d$};
\path[fill=white] (1,1) circle (.3);
\node at (1,1) {$z_i,z_j$};
\end{tikzpicture}&
\begin{tikzpicture}[scale=0.7]
\draw (0,0) to [out = 0, in = 180] (2,2);
\draw (0,2) to [out = 0, in = 180] (2,0);
\draw[line width=0.5mm, brown, fill=white] (0,0) circle (.35);
\draw[fill=white] (0,2) circle (.35);
\draw[fill=white] (2,2) circle (.35);
\draw[line width=0.5mm, brown, fill=white] (2,0) circle (.35);
\node at (0,0) {$d$};
\node at (0,2) {$0$};
\node at (2,2) {$0$};
\node at (2,0) {$d$};
\path[fill=white] (1,1) circle (.3);
\node at (1,1) {$z_i,z_j$};
\end{tikzpicture}\\
   \midrule
   z_j & z_i & z_i - z_j & z_j \\
   \midrule
\begin{tikzpicture}[scale=0.7]
\draw (0,0) to [out = 0, in = 180] (2,2);
\draw (0,2) to [out = 0, in = 180] (2,0);
\draw[line width=0.5mm, blue, fill=white] (0,0) circle (.35);
\draw[line width=0.5mm, red, fill=white] (0,2) circle (.35);
\draw[line width=0.5mm, red, fill=white] (2,2) circle (.35);
\draw[line width=0.5mm, blue, fill=white] (2,0) circle (.35);
\node at (0,0) {$c'$};
\node at (0,2) {$c$};
\node at (2,2) {$c$};
\node at (2,0) {$c'$};
\path[fill=white] (1,1) circle (.3);
\node at (1,1) {$z_i,z_j$};
\end{tikzpicture}&
\begin{tikzpicture}[scale=0.7]
\draw (0,0) to [out = 0, in = 180] (2,2);
\draw (0,2) to [out = 0, in = 180] (2,0);
\draw[line width=0.5mm, red, fill=white] (0,0) circle (.35);
\draw[line width=0.5mm, blue, fill=white] (0,2) circle (.35);
\draw[line width=0.5mm, blue, fill=white] (2,2) circle (.35);
\draw[line width=0.5mm, red, fill=white] (2,0) circle (.35);
\node at (0,0) {$c$};
\node at (0,2) {$c'$};
\node at (2,2) {$c'$};
\node at (2,0) {$c$};
\path[fill=white] (1,1) circle (.3);
\node at (1,1) {$z_i,z_j$};
\end{tikzpicture}&
\begin{tikzpicture}[scale=0.7]
\draw (0,0) to [out = 0, in = 180] (2,2);
\draw (0,2) to [out = 0, in = 180] (2,0);
\draw[line width=0.5mm, red, fill=white] (0,0) circle (.35);
\draw[line width=0.5mm, blue, fill=white] (0,2) circle (.35);
\draw[line width=0.5mm, red, fill=white] (2,2) circle (.35);
\draw[line width=0.5mm, blue, fill=white] (2,0) circle (.35);
\node at (0,0) {$c$};
\node at (0,2) {$c'$};
\node at (2,2) {$c$};
\node at (2,0) {$c'$};
\path[fill=white] (1,1) circle (.3);
\node at (1,1) {$z_i,z_j$};
\end{tikzpicture}&
\begin{tikzpicture}[scale=0.7]
\draw (0,0) to [out = 0, in = 180] (2,2);
\draw (0,2) to [out = 0, in = 180] (2,0);
\draw[line width=0.5mm, brown, fill=white] (0,0) circle (.35);
\draw[line width=0.5mm, brown, fill=white] (0,2) circle (.35);
\draw[line width=0.5mm, brown, fill=white] (2,2) circle (.35);
\draw[line width=0.5mm, brown, fill=white] (2,0) circle (.35);
\node at (0,0) {$d$};
\node at (0,2) {$d$};
\node at (2,2) {$d$};
\node at (2,0) {$d$};
\path[fill=white] (1,1) circle (.3);
\node at (1,1) {$z_i,z_j$};
\end{tikzpicture}\\
   \midrule
   z_j & z_i & z_i - z_j & z_i \\
   \bottomrule
\end{array}
\]
\caption{The colored $R^{\Delta}_{\Delta}$-matrix with ${\color{red} c} > {\color{blue} c'}$ and ${\color{brown} d}$ being any color.}
\label{fig:colored_R_matrix_DD}
\end{figure}


\begin{figure}
\[
\begin{array}{c@{\hspace{30pt}}c@{\hspace{30pt}}c@{\hspace{30pt}}c}
\toprule
\begin{tikzpicture}[scale=0.7]
\draw (0,0) to [out = 0, in = 180] (2,2);
\draw (0,2) to [out = 0, in = 180] (2,0);
\draw[fill=white] (0,0) circle (.35);
\draw[fill=white] (0,2) circle (.35);
\draw[fill=white] (2,0) circle (.35);
\draw[fill=white] (2,2) circle (.35);
\node at (0,0) {$0$};
\node at (0,2) {$0$};
\node at (2,2) {$0$};
\node at (2,0) {$0$};
\path[fill=white] (1,1) circle (.3);
\node at (1,1) {$z_i,z_j$};
\end{tikzpicture}&
\begin{tikzpicture}[scale=0.7]
\draw (0,0) to [out = 0, in = 180] (2,2);
\draw (0,2) to [out = 0, in = 180] (2,0);
\draw[fill=white] (0,0) circle (.35);
\draw[fill=white] (0,2) circle (.35);
\draw[line width=0.5mm, brown, fill=white] (2,2) circle (.35);
\draw[line width=0.5mm, brown, fill=white] (2,0) circle (.35);
\node at (0,0) {$0$};
\node at (0,2) {$0$};
\node at (2,2) {$d$};
\node at (2,0) {$d$};
\path[fill=white] (1,1) circle (.3);
\node at (1,1) {$z_i,z_j$};
\end{tikzpicture}&
\begin{tikzpicture}[scale=0.7]
\draw (0,0) to [out = 0, in = 180] (2,2);
\draw (0,2) to [out = 0, in = 180] (2,0);
\draw[line width=0.5mm, brown, fill=white] (0,0) circle (.35);
\draw[fill=white] (0,2) circle (.35);
\draw[line width=0.5mm, brown, fill=white] (2,2) circle (.35);
\draw[fill=white] (2,0) circle (.35);
\node at (0,0) {$d$};
\node at (0,2) {$0$};
\node at (2,2) {$d$};
\node at (2,0) {$0$};
\path[fill=white] (1,1) circle (.3);
\node at (1,1) {$z_i,z_j$};
\end{tikzpicture}&
\begin{tikzpicture}[scale=0.7]
\draw (0,0) to [out = 0, in = 180] (2,2);
\draw (0,2) to [out = 0, in = 180] (2,0);
\draw[fill=white] (0,0) circle (.35);
\draw[line width=0.5mm, brown, fill=white] (0,2) circle (.35);
\draw[fill=white] (2,2) circle (.35);
\draw[line width=0.5mm, brown, fill=white] (2,0) circle (.35);
\node at (0,0) {$0$};
\node at (0,2) {$d$};
\node at (2,2) {$0$};
\node at (2,0) {$d$};
\path[fill=white] (1,1) circle (.3);
\node at (1,1) {$z_i,z_j$};
\end{tikzpicture}\\
   \midrule
   z_i - z_j & z_i & z_j & z_j \\
   \midrule
\begin{tikzpicture}[scale=0.7]
\draw (0,0) to [out = 0, in = 180] (2,2);
\draw (0,2) to [out = 0, in = 180] (2,0);
\draw[line width=0.5mm, brown, fill=white] (0,0) circle (.35);
\draw[line width=0.5mm, brown, fill=white] (0,2) circle (.35);
\draw[fill=white] (2,2) circle (.35);
\draw[fill=white] (2,0) circle (.35);
\node at (0,0) {$d$};
\node at (0,2) {$d$};
\node at (2,2) {$0$};
\node at (2,0) {$0$};
\path[fill=white] (1,1) circle (.3);
\node at (1,1) {$z_i,z_j$};
\end{tikzpicture}&
\begin{tikzpicture}[scale=0.7]
\draw (0,0) to [out = 0, in = 180] (2,2);
\draw (0,2) to [out = 0, in = 180] (2,0);
\draw[line width=0.5mm, blue, fill=white] (0,0) circle (.35);
\draw[line width=0.5mm, blue, fill=white] (0,2) circle (.35);
\draw[line width=0.5mm, red, fill=white] (2,2) circle (.35);
\draw[line width=0.5mm, red, fill=white] (2,0) circle (.35);
\node at (0,0) {$c'$};
\node at (0,2) {$c'$};
\node at (2,2) {$c$};
\node at (2,0) {$c$};
\path[fill=white] (1,1) circle (.3);
\node at (1,1) {$z_i,z_j$};
\end{tikzpicture}&
\begin{tikzpicture}[scale=0.7]
\draw (0,0) to [out = 0, in = 180] (2,2);
\draw (0,2) to [out = 0, in = 180] (2,0);
\draw[line width=0.5mm, blue, fill=white] (0,0) circle (.35);
\draw[line width=0.5mm, red, fill=white] (0,2) circle (.35);
\draw[line width=0.5mm, blue, fill=white] (2,2) circle (.35);
\draw[line width=0.5mm, red, fill=white] (2,0) circle (.35);
\node at (0,0) {$c'$};
\node at (0,2) {$c$};
\node at (2,2) {$c'$};
\node at (2,0) {$c$};
\path[fill=white] (1,1) circle (.3);
\node at (1,1) {$z_i,z_j$};
\end{tikzpicture}&
\begin{tikzpicture}[scale=0.7]
\draw (0,0) to [out = 0, in = 180] (2,2);
\draw (0,2) to [out = 0, in = 180] (2,0);
\draw[line width=0.5mm, brown, fill=white] (0,0) circle (.35);
\draw[line width=0.5mm, brown, fill=white] (0,2) circle (.35);
\draw[line width=0.5mm, brown, fill=white] (2,2) circle (.35);
\draw[line width=0.5mm, brown, fill=white] (2,0) circle (.35);
\node at (0,0) {$d$};
\node at (0,2) {$d$};
\node at (2,2) {$d$};
\node at (2,0) {$d$};
\path[fill=white] (1,1) circle (.3);
\node at (1,1) {$z_i,z_j$};
\end{tikzpicture}\\
   \midrule
   z_j & z_j & z_j & z_j \\
   \bottomrule
\end{array}
\]
\caption{The colored $R^{\Delta}_{\Gamma}$-matrix with ${\color{red} c} > {\color{blue} c'}$ and ${\color{brown} d}$ being any color.}
\label{fig:colored_R_matrix_DG}
\end{figure}


\begin{figure}
\[
\begin{array}{c@{\hspace{30pt}}c@{\hspace{30pt}}c@{\hspace{30pt}}c}
\toprule
\begin{tikzpicture}[scale=0.7]
\draw (0,0) to [out = 0, in = 180] (2,2);
\draw (0,2) to [out = 0, in = 180] (2,0);
\draw[fill=white] (0,0) circle (.35);
\draw[fill=white] (0,2) circle (.35);
\draw[fill=white] (2,0) circle (.35);
\draw[fill=white] (2,2) circle (.35);
\node at (0,0) {$0$};
\node at (0,2) {$0$};
\node at (2,2) {$0$};
\node at (2,0) {$0$};
\path[fill=white] (1,1) circle (.3);
\node at (1,1) {$z_i,z_j$};
\end{tikzpicture}&
\begin{tikzpicture}[scale=0.7]
\draw (0,0) to [out = 0, in = 180] (2,2);
\draw (0,2) to [out = 0, in = 180] (2,0);
\draw[fill=white] (0,0) circle (.35);
\draw[fill=white] (0,2) circle (.35);
\draw[line width=0.5mm, brown, fill=white] (2,2) circle (.35);
\draw[line width=0.5mm, brown, fill=white] (2,0) circle (.35);
\node at (0,0) {$0$};
\node at (0,2) {$0$};
\node at (2,2) {$d$};
\node at (2,0) {$d$};
\path[fill=white] (1,1) circle (.3);
\node at (1,1) {$z_i,z_j$};
\end{tikzpicture}&
\begin{tikzpicture}[scale=0.7]
\draw (0,0) to [out = 0, in = 180] (2,2);
\draw (0,2) to [out = 0, in = 180] (2,0);
\draw[line width=0.5mm, brown, fill=white] (0,0) circle (.35);
\draw[fill=white] (0,2) circle (.35);
\draw[line width=0.5mm, brown, fill=white] (2,2) circle (.35);
\draw[fill=white] (2,0) circle (.35);
\node at (0,0) {$d$};
\node at (0,2) {$0$};
\node at (2,2) {$d$};
\node at (2,0) {$0$};
\path[fill=white] (1,1) circle (.3);
\node at (1,1) {$z_i,z_j$};
\end{tikzpicture}&
\begin{tikzpicture}[scale=0.7]
\draw (0,0) to [out = 0, in = 180] (2,2);
\draw (0,2) to [out = 0, in = 180] (2,0);
\draw[fill=white] (0,0) circle (.35);
\draw[line width=0.5mm, brown, fill=white] (0,2) circle (.35);
\draw[fill=white] (2,2) circle (.35);
\draw[line width=0.5mm, brown, fill=white] (2,0) circle (.35);
\node at (0,0) {$0$};
\node at (0,2) {$d$};
\node at (2,2) {$0$};
\node at (2,0) {$d$};
\path[fill=white] (1,1) circle (.3);
\node at (1,1) {$z_i,z_j$};
\end{tikzpicture}\\
   \midrule
   -z_i & z_j & z_i & z_i \\
   \midrule
\begin{tikzpicture}[scale=0.7]
\draw (0,0) to [out = 0, in = 180] (2,2);
\draw (0,2) to [out = 0, in = 180] (2,0);
\draw[line width=0.5mm, brown, fill=white] (0,0) circle (.35);
\draw[line width=0.5mm, brown, fill=white] (0,2) circle (.35);
\draw[fill=white] (2,2) circle (.35);
\draw[fill=white] (2,0) circle (.35);
\node at (0,0) {$d$};
\node at (0,2) {$d$};
\node at (2,2) {$0$};
\node at (2,0) {$0$};
\path[fill=white] (1,1) circle (.3);
\node at (1,1) {$z_i,z_j$};
\end{tikzpicture}&
\begin{tikzpicture}[scale=0.7]
\draw (0,0) to [out = 0, in = 180] (2,2);
\draw (0,2) to [out = 0, in = 180] (2,0);
\draw[line width=0.5mm, blue, fill=white] (0,0) circle (.35);
\draw[line width=0.5mm, blue, fill=white] (0,2) circle (.35);
\draw[line width=0.5mm, red, fill=white] (2,2) circle (.35);
\draw[line width=0.5mm, red, fill=white] (2,0) circle (.35);
\node at (0,0) {$c'$};
\node at (0,2) {$c'$};
\node at (2,2) {$c$};
\node at (2,0) {$c$};
\path[fill=white] (1,1) circle (.3);
\node at (1,1) {$z_i,z_j$};
\end{tikzpicture}&
\begin{tikzpicture}[scale=0.7]
\draw (0,0) to [out = 0, in = 180] (2,2);
\draw (0,2) to [out = 0, in = 180] (2,0);
\draw[line width=0.5mm, blue, fill=white] (0,0) circle (.35);
\draw[line width=0.5mm, red, fill=white] (0,2) circle (.35);
\draw[line width=0.5mm, blue, fill=white] (2,2) circle (.35);
\draw[line width=0.5mm, red, fill=white] (2,0) circle (.35);
\node at (0,0) {$c'$};
\node at (0,2) {$c$};
\node at (2,2) {$c'$};
\node at (2,0) {$c$};
\path[fill=white] (1,1) circle (.3);
\node at (1,1) {$z_i,z_j$};
\end{tikzpicture}&
\begin{tikzpicture}[scale=0.7]
\draw (0,0) to [out = 0, in = 180] (2,2);
\draw (0,2) to [out = 0, in = 180] (2,0);
\draw[line width=0.5mm, brown, fill=white] (0,0) circle (.35);
\draw[line width=0.5mm, brown, fill=white] (0,2) circle (.35);
\draw[line width=0.5mm, brown, fill=white] (2,2) circle (.35);
\draw[line width=0.5mm, brown, fill=white] (2,0) circle (.35);
\node at (0,0) {$d$};
\node at (0,2) {$d$};
\node at (2,2) {$d$};
\node at (2,0) {$d$};
\path[fill=white] (1,1) circle (.3);
\node at (1,1) {$z_i,z_j$};
\end{tikzpicture}\\
   \midrule
   z_i & z_i (*)& z_i - z_j (*) & z_i - z_j \\
   \bottomrule
\end{array}
\]
\caption{The colored $R^{\Gamma}_{\Delta}$-matrix with ${\color{red} c} > {\color{blue} c'}$ and ${\color{brown} d}$ being any color.}
\label{fig:colored_R_matrix_GD}
\end{figure}

A lattice model is called \defn{solvable} or \defn{integrable} if it there exists a full set of solutions of the Yang--Baxter equation and its generalizations that enable one to derive functional equations for the partition function that can be used to characterize it.
For example, the model in~\cite{Ivanov12} is integrable because of the existence of four $R$-matrices, called $R^{\Gamma}_{\Gamma}$, $R^{\Delta}_{\Gamma}$, $R^{\Gamma}_{\Delta}$, and $R^{\Delta}_{\Delta}$ that satisfy the appropriate Yang--Baxter equations and reflection equations. 

Our model is not integrable in this sense, but it is close. 
We produce three $R$-matrices $R^\Gamma_\Gamma$, $R^\Delta_\Delta$, and $R^\Delta_\Gamma$ that are given in Figures~\ref{fig:colored_R_matrix_GG},~\ref{fig:colored_R_matrix_DD}, and ~\ref{fig:colored_R_matrix_DG}, respectively. 
These $R$-matrices satisfy the Yang--Baxter equation with the corresponding $L$-matrices as explained in Proposition~\ref{prop:YBE}. 
However, it can be shown, computationally, that there is no solution for the Yang--Baxter equation corresponding to $^{\Gamma}_{\Delta}$.
The problem, compared to the uncolored setting discussed in~\cite{Ivanov12} where such a solution exists, is that certain colored loops can be formed inside one side of Equation~\eqref{eq:RLL_relation}.
This then ends up multiplying that side's partition function by the total number of colors, which is $2n$, whereas the other side does not depend on $n$.
Hence, the two partition functions cannot be equal.

We do however produce a fourth $R$-matrix called $R^\Gamma_\Delta$ in Figure~\ref{fig:colored_R_matrix_GD} that is partly determined.
This means that the weights marked with $(*)$ in Figure~\ref{fig:colored_R_matrix_GD} are free, so they can be changed and this does not affect our results.
Yet, we do stress that no matter how you change them, the corresponding Yang--Baxter equation will still not be satisfied, including changing the allowed colorings (such that the colors are preserved).
Given these four $R$-matrices satisfying a total of three Yang--Baxter equations, we prove in Section~\ref{sec:billiards} a functional equation for the partition function for each of the simple reflections $s_i$, for $i < n$. 
The method of proof is by a modified version of the train argument applied to U-turn lattice model; the modification is technical and needed as the fourth Yang--Baxter equation does not have a solution. 
In Section~\ref{sec:ichthyology}, we then prove certain modified fish equations which are used to show a functional equation for the last remaining simple reflection. 
Our model therefore lacks a solution for the Yang--Baxter equations, but can still be studied via modified versions of the originally tools used to study solvable lattice models.
We shall call such a model \defn{quasi-solvable}.

\begin{proposition}
\label{prop:YBE}
The $R^{\Gamma}_{\Gamma}$-matrix, $R^{\Delta}_{\Delta}$-matrix, or $R^{\Delta}_{\Gamma}$-matrix satisfy the corresponding \defn{Yang--Baxter equation}:
The partition function of the following two models are equal for any boundary conditions $a,b,c,d,e,f \in \cc \sqcup \{0\}$:
\begin{equation}
\label{eq:RLL_relation}
\begin{tikzpicture}[baseline=(current bounding box.center),scale=0.8]
  \draw (0,1) to [out = 0, in = 180] (2,3) to (4,3);
  \draw (0,3) to [out = 0, in = 180] (2,1) to (4,1);
  \draw (3,0) to (3,4);
  \draw[fill=white] (0,1) circle (.3);
  \draw[fill=white] (0,3) circle (.3);
  \draw[fill=white] (3,4) circle (.3);
  \draw[fill=white] (4,3) circle (.3);
  \draw[fill=white] (4,1) circle (.3);
  \draw[fill=white] (3,0) circle (.3);
  \node at (0,1) {$a$};
  \node at (0,3) {$b$};
  \node at (3,4) {$c$};
  \node at (4,3) {$d$};
  \node at (4,1) {$e$};
  \node at (3,0) {$f$};
\path[fill=white] (3,3) circle (.3);
\node at (3,3) {\scriptsize $z_i$};
\path[fill=white] (3,1) circle (.3);
\node at (3,1) {\scriptsize $z_j$};
\path[fill=white] (1,2) circle (.3);
\node at (1,2) {\scriptsize $z_i,z_j$};
\end{tikzpicture}
\hspace{60pt}
\begin{tikzpicture}[baseline=(current bounding box.center),scale=0.8]
  \draw (0,1) to (2,1) to [out = 0, in = 180] (4,3);
  \draw (0,3) to (2,3) to [out = 0, in = 180] (4,1);
  \draw (1,0) to (1,4);
  \draw[fill=white] (0,1) circle (.3);
  \draw[fill=white] (0,3) circle (.3);
  \draw[fill=white] (1,4) circle (.3);
  \draw[fill=white] (4,3) circle (.3);
  \draw[fill=white] (4,1) circle (.3);
  \draw[fill=white] (1,0) circle (.3);
  \node at (0,1) {$a$};
  \node at (0,3) {$b$};
  \node at (1,4) {$c$};
  \node at (4,3) {$d$};
  \node at (4,1) {$e$};
  \node at (1,0) {$f$};
\path[fill=white] (1,3) circle (.3);
\node at (1,3) {\scriptsize $z_j$};
\path[fill=white] (1,1) circle (.3);
\node at (1,1) {\scriptsize $z_i$};
\path[fill=white] (3,2) circle (.3);
\node at (3,2) {\scriptsize $z_i,z_j$};
\end{tikzpicture}
\end{equation}
where the $z_i,z_j$ weights are $R^{\Phi}_{\Theta}$-weights, the $z_i$-weights are $\Phi$-weights and the $z_j$-weights are $\Theta$-weights for $\Phi\Theta \in \{\Gamma\Gamma, \Delta\Gamma, \Delta\Gamma\}$.
\end{proposition}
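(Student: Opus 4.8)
The plan is to treat Proposition~\ref{prop:YBE} as a finite verification. For each fixed choice of boundary spins $a,b,c,d,e,f \in \cc \sqcup \{0\}$, both sides of~\eqref{eq:RLL_relation} are partition functions obtained by summing, over the three interior edges of each picture (two horizontal segments and one vertical segment between the two $L$-vertices), the product of a single $R^{\Phi}_{\Theta}$-weight with the two $L$-matrix weights, where the $z_i$-row uses $\Phi$-weights and the $z_j$-row uses $\Theta$-weights. Thus it suffices to show that these two sums agree as Laurent polynomials in $z_i, z_j$ for every boundary and for each $\Phi\Theta \in \{\Gamma\Gamma, \Delta\Delta, \Delta\Gamma\}$.

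The first reduction is \emph{color conservation}. Inspecting Figures~\ref{fig:colored_gamma_weights}, \ref{fig:colored_delta_weights}, \ref{fig:colored_R_matrix_GG}, \ref{fig:colored_R_matrix_DD}, and~\ref{fig:colored_R_matrix_DG}, every admissible vertex and crossing has the property that each nonzero color entering also exits; equivalently, each color $c \in \cc$ traces a continuous path through the local configuration. Since there are only six boundary edges, any nonzero term involves only finitely many distinct nonzero colors, and for fixed boundary data the color paths on each side are forced up to the local branching at the single crossing. This sharply limits the admissible interior labelings one must enumerate.

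The second reduction is \emph{color merging}: every Boltzmann weight in the figures depends on its edge colors only through the pattern of equalities with $0$ and the relative order of the distinct nonzero colors, since the rules are phrased entirely in terms of ``a color $d$'' or of ``${\color{red}c} > {\color{blue}c'}$.'' Because crossings and vertices are at most binary in their color interaction, any order-preserving identification of the colors appearing on the boundary sends admissible states to admissible states of the same weight, simultaneously on both sides of~\eqref{eq:RLL_relation}. It therefore suffices to verify the identity when at most two distinct nonzero colors occur, which is exactly the data needed to exhibit the $c > c'$ distinction, leaving only finitely many boundary choices to check for each of the three $R$-matrices.

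With these reductions in place, I would finish by a case analysis over the reduced boundaries. For most choices there is a single admissible interior state on each side with manifestly equal weight; the substantive cases are those in which two colored paths genuinely cross, where the braiding entries (the $z_i - z_j$ weights, and for $R^\Delta_\Gamma$ the asymmetric $z_i$ versus $z_j$ entries) must recombine correctly. The main obstacle is precisely these two-color crossing cases: one must confirm that the enumeration of interior states is exhaustive and that the ordering conventions agree across the $R$-matrix and the two \emph{different} $L$-matrices, since $R^\Delta_\Gamma$ couples a $\Delta$-row and a $\Gamma$-row whose weight tables in Figures~\ref{fig:colored_gamma_weights} and~\ref{fig:colored_delta_weights} do not coincide. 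These computations are elementary but numerous, and after the color reductions they reduce to comparing two short sums of monomials in $z_i, z_j$, which can be confirmed by hand or by direct computer enumeration over the finitely many reduced boundary conditions.
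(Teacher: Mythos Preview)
Your overall strategy coincides with the paper's: observe that the $R$- and $L$-matrices conserve colors (so no loops form), note that every Boltzmann weight depends only on the order type of the colors present, and conclude that~\eqref{eq:RLL_relation} reduces to a finite check, which the paper carries out via \textsc{SageMath}.

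However, your second reduction overreaches. You claim it suffices to check boundaries with at most two distinct nonzero colors because each vertex is ``at most binary in its color interaction.'' That is true locally, but the RLL configuration has three strands---two horizontal and one vertical---and the conserved boundary multiset $\{a,b,c\}=\{d,e,f\}$ can consist of three pairwise distinct colors $c_1<c_2<c_3$. In such a state the $R$-vertex compares $c_1$ with $c_2$, one $L$-vertex compares $c_2$ with $c_3$, and the other may compare $c_1$ with $c_3$. No order-preserving identification of two of these colors leaves all three comparisons intact, and merging genuinely changes weights: for instance the $R^{\Gamma}_{\Gamma}$ entry with pattern $(c,c',c,c')$ carries weight $z_i-z_j$ for $c\neq c'$, whereas $(d,d,d,d)$ carries weight $z_i$. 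So three-color boundaries are not covered by your reduction, and a verification restricted to two nonzero colors would be incomplete. The correct bound is three nonzero colors, i.e.\ a spin alphabet $\{0,1,2,3\}$ of size four, which is precisely what the paper states (``at most $4$ different colors'') and what its appendix code checks (\texttt{max\_m=4}). With that single correction your plan is complete and identical to the paper's argument.
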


\begin{proof}
Since the $R$-matrix and the $L$-matrices preserves the spins and no (colored) loops can be formed, we only have to check this statement for at most $4$ different colors.
Hence this is a finite computation that can be done by, \textit{e.g.}, \textsc{SageMath}~\cite{sage}. The \textsc{SageMath} used to perform this computation is given in the Appendix of this paper.
\end{proof}

This model also generally satisfies the \defn{reflection equation}.

\begin{proposition}\label{prop:reflection}
For any fixed boundary condition $a,b,c,d \in \cc \sqcup \{0\}$, the partition function of the model on left
\[
\begin{tikzpicture}[baseline=0, scale=0.7]
  \draw (-2,1) to [out=0, in=180] (0,3) to [out=0, in=180] (2,5) to [out=-40, in=40] (3,6) to [out=-40, in=40] (2,7) to (-2,7);
  \draw (-2,3) to [out=0, in=180] (0,1) to (2,1) to [out=-40, in=40] (3,2) to [out=-40, in=40] (2,3) to [out=180, in=0] (0,5) to (-2,5);
  \foreach \x in {-2,0,2} {
    \foreach \y in {1,3,5,7}
      \draw[fill=white] (\x,\y) circle (.3);
  }
  \node at (-3,3) {$\Gamma$};
  \node at (-3,7) {$\Gamma$};
  \node at (-3,1) {$\Delta$};
  \node at (-3,5) {$\Delta$};
  \node at (-2,7) {$a$};
  \node at (-2,5) {$b$};
  \node at (0,7) {$a$};
  \node at (0,5) {$b$};
  \node at (2,7) {$a$};
  \node at (-2,3) {$c$};
  \node at (-2,1) {$d$};
  \foreach \x/\y in {0/1, 0/3, 2/1, 2/3, 2/5}
    \node at (\x,\y) {$\ast$};
  \path[fill=white] (3,2) circle (.35);
  \node at (3,2) {\scriptsize $z_j$};
  \path[fill=white] (3,6) circle (.35);
  \node at (3,6) {\scriptsize $z_i$};
  \path[fill=white] (1,4) circle (.35);
  \node at (1,4) {\scriptsize $z_i^{-1},z_j$};
  \path[fill=white] (-1,2) circle (.35);
  \node at (-1,2) {\scriptsize $z_i^{-1},z_j^{-1}$};
\end{tikzpicture}
\hspace{60pt}
\begin{tikzpicture}[baseline=-1.4cm, scale=0.7]
  \draw (-2,-1) to (2,-1) to [out=-40, in=40] (3,0) to [out=-40, in=40] (2,1) to [out=180, in=0] (0,3) to [out=180, in=0] (-2,5);
  \draw (-2,1) to (0,1) to [out=0, in=180] (2,3) to [out=-40, in=40] (3,4) to [out=-40, in=40] (2,5) to (0,5) to [out=180, in=0] (-2,3);
  \foreach \x in {-2,0,2} {
    \foreach \y in {-1,1,3,5}
      \draw[fill=white] (\x,\y) circle (.3);
  }
  \node at (-3,1) {$\Gamma$};
  \node at (-3,5) {$\Gamma$};
  \node at (-3,-1) {$\Delta$};
  \node at (-3,3) {$\Delta$};
  \node at (-2,5) {$a$};
  \node at (-2,3) {$b$};
  \node at (-2,-1) {$d$};
  \node at (-2,1) {$c$};
  \node at (0,-1) {$d$};
  \node at (0,1) {$c$};  
  \node at (2,-1) {$d$};
  \foreach \x/\y in {0/3, 0/5, 2/1, 2/3, 2/5}
    \node at (\x,\y) {$\ast$};
  \path[fill=white] (3,0) circle (.35);
  \node at (3,0) {\scriptsize $z_i$};
  \path[fill=white] (3,4) circle (.35);
  \node at (3,4) {\scriptsize $z_j$};
  \path[fill=white] (-1,4) circle (.35);
  \node at (-1,4) {\scriptsize $z_j,z_i$};
  \path[fill=white] (1,2) circle (.35);
  \node at (1,2) {\scriptsize $z_j^{-1},z_i$};
\end{tikzpicture}
\]
equals to the partition function on the right times $\alpha = z_i^{-2}$. 
\end{proposition}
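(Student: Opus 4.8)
The plan is to prove the reflection equation by the same finite-verification strategy used for the Yang--Baxter equation in Proposition~\ref{prop:YBE}, the essential preliminary being to identify exactly which matrices occur. Reading off the spectral parameters in the two diagrams, and recalling that a $\Gamma$ line carries a positive power $z_k$ while a $\Delta$ line carries $z_k^{-1}$, one sees that only the three fully-determined $R$-matrices appear: on the left, the crossing labeled $(z_i^{-1},z_j)$ is an $R^{\Delta}_{\Gamma}$-matrix and the crossing $(z_i^{-1},z_j^{-1})$ is an $R^{\Delta}_{\Delta}$-matrix, while on the right the crossing $(z_j,z_i)$ is an $R^{\Gamma}_{\Gamma}$-matrix and the crossing $(z_j^{-1},z_i)$ is again an $R^{\Delta}_{\Gamma}$-matrix. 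Crucially, the partly-undetermined matrix $R^{\Gamma}_{\Delta}$ never enters, so every Boltzmann weight in the identity is unambiguous; the statement is thus the type $BC$ braid relation relating $KRKR$ and $RKRK$, built only from the good ingredients.

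I would then reduce the claim to a finite computation exactly as in Proposition~\ref{prop:YBE}. Each $R$-matrix preserves the multiset of colors on its two strands, and each $K$-matrix transports a color $u$ on its $\Gamma$-leg to $\overline{u}$ on its $\Delta$-leg, the colorless weight $\tt{k}_1$ being the only alternative. Consequently every colored strand runs from one boundary vertex to another, performing a single bar operation if it passes through a U-turn, so that no closed colored loop is ever created inside either configuration. The colors that can appear on interior edges therefore lie among $\{a,b,c,d\}$ and their conjugates, bounding the number of distinct colors by four, and the identity becomes a finite comparison of weighted sums over admissible interior fillings that can be carried out in \textsc{SageMath}.

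The step demanding the most care is the scalar $\alpha = z_i^{-2}$, together with its independence from $z_j$ and from the Cartan type. In both diagrams one strand joins the boundary edges $a$ and $d$ through the $z_i$ U-turn while the other joins $b$ and $c$ through the $z_j$ U-turn, so a given U-turn is colorless exactly when its two boundary colors vanish; this is dictated by the boundary data and hence occurs identically on the two sides. Since the $K$-matrices of types $B$ and $C$ differ only in the colorless weight $\tt{k}_1$, the type-dependent contribution cancels in the ratio, and the prefactor is the same in both models. One still has to confirm that this prefactor is exactly $z_i^{-2}$ and free of $z_j$: in the fully colorless sector this is visible directly, as the two colorless U-turns cancel and the surviving product of all-$0$ $R$-matrix weights collapses to $z_i^{-2}$ after removing the common factor $1 - z_i z_j$, while the colored sectors, where the U-turns instead carry $\tt{k}_2$ or $\tt{k}_3$ weights that are type-independent, are settled uniformly by the same finite computation. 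Establishing this matching of the $\tt{k}_1$ contributions, rather than the routine weight enumeration, is the real content of the argument.
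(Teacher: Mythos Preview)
Your overall approach---reduce to a bounded palette of colors and then verify the identity by a finite weight computation---is exactly the paper's strategy, and your identification of the four crossings as $R^{\Delta}_{\Delta}$, $R^{\Delta}_{\Gamma}$ on the left and $R^{\Gamma}_{\Gamma}$, $R^{\Delta}_{\Gamma}$ on the right (so that the ill-behaved $R^{\Gamma}_{\Delta}$ never appears) is correct and matches how the proposition is invoked in the billiards argument.

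The gap is in your argument that the type-dependent weight $\tt{k}_1$ occurs ``identically on the two sides.''  Your claim that a given U-turn is colorless exactly when \emph{both} boundary colors of its strand vanish is false: in the left diagram the $\Gamma$-leg of the $z_i$ U-turn is the fixed boundary edge $a$, so that U-turn is $\tt{k}_1$ precisely when $a=0$, regardless of $d$; in the right diagram the $\Delta$-leg of the $z_i$ U-turn is the fixed edge $d$, so it is $\tt{k}_1$ precisely when $d=0$.  These conditions do not coincide.  For instance, with $(a,b,c,d)=(0,u,0,\overline{u})$ the left side has its $z_i$ U-turn in configuration $\tt{k}_1$ (contributing $K(z_i)$) while the right side does not; on the right one actually finds two admissible states, one with the $z_j$ U-turn colorless (contributing $K(z_j)$) and one with no $\tt{k}_1$ at all.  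The ratio still collapses to $z_i^{-2}$ for both Cartan types, but only because of a genuine algebraic identity---for type $B$ one needs $z_i^{-2}(1+z_i)=z_i^{-2}\cdot(1+z_i)$ after the $K(z_j)$-term on the right combines with the second state---not because the $\tt{k}_1$ factors cancel termwise.  So your shortcut for type-independence does not work; you must either run the finite verification separately for the two $K$-matrices, or find a different structural reason (e.g.\ verify that both $K$-functions satisfy the same scalar reflection relation).
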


\begin{proof}
Since the $R$-matrices preserve the colors, we can restrict to the case when
\[
a,b,c,d \in \{0\} \sqcup \{ u > u' > \overline{u}' > \overline{u} \}\]
Therefore, this is also a finite computation (take $u' =1$ and $u = 2$) that can be done by, \textit{e.g.},~\textsc{SageMath}.
We can also verify this by hand as follows.
By considering the nonzero $R$-matrix and $K$-matrix entries, we can reduce it to the following cases for $(a,b,c,d)$ that result in nonzero partition functions:
\begin{align*}
& (0, 0, 0, 0)
&
& (t, 0, \overline{u}, 0),
&
& (t, 0, 0, \overline{u}),
&
& (0, t, \overline{u}, 0),
&
& (0, t, 0, \overline{u}),
\\
& (t, t, \overline{u}, \overline{u}),
&
& (\overline{u}, u, \overline{u}, \overline{u}),
&
& (t', u, \overline{u}', \overline{u}),
&
& (\overline{u}, u', \overline{u}, \overline{u}'),
&
& (\overline{u}, \overline{u}', \overline{u}, \overline{u}'),
\end{align*}
where $t = u, \overline{u}$ and $t' = u', \overline{u}'$.
In each of these cases, there is precise one state for each model, and so the claim follows by direct computation.
\end{proof}

We will use the so-called \defn{unitary equation} to describe what happens when we uncross two strands.
More precisely, we show that the partition function of the model on the left
\begin{equation}
\label{eq:unitary_model}
\begin{tikzpicture}[scale=0.7,baseline=16]
\draw (0,0) to [out = 0, in = 180] (2,2) to [out = 0, in = 180] (4,0);
\draw (0,2) to [out = 0, in = 180] (2,0) to [out = 0, in = 180] (4,2);
\draw[fill=white] (0,0) circle (.35);
\draw[fill=white] (0,2) circle (.35);
\draw[fill=white] (2,0) circle (.35);
\draw[fill=white] (2,2) circle (.35);
\draw[fill=white] (4,0) circle (.35);
\draw[fill=white] (4,2) circle (.35);
\node at (0,0) {$b$};
\node at (0,2) {$a$};
\node at (2,2) {$\ast$};
\node at (2,0) {$\ast$};
\node at (4,2) {$a$};
\node at (4,0) {$b$};
\path[fill=white] (1,1) circle (.3);
\node at (1,1) {\scriptsize $z_i,z_j$};
\path[fill=white] (3,1) circle (.3);
\node at (3,1) {\scriptsize $z_j,z_i$};
\end{tikzpicture}
\qquad\qquad
\begin{tikzpicture}[scale=0.7,baseline=16]
\draw (0,0) to (2,0);
\draw (0,2) to (2,2);
\draw[fill=white] (0,0) circle (.35);
\draw[fill=white] (0,2) circle (.35);
\draw[fill=white] (2,0) circle (.35);
\draw[fill=white] (2,2) circle (.35);
\node at (0,0) {$b$};
\node at (0,2) {$a$};
\node at (2,2) {$a$};
\node at (2,0) {$b$};
\end{tikzpicture}
\end{equation}
is simply a fixed scalar value $\beta$ independent of the boundary condition $a, b \in \{0, c_1, \dotsc, c_k\}$ times the partition function on the right, which we set to be $1$ by definition. 

\begin{proposition}
\label{prop:unitary}
The partition function of the model on the left in~\eqref{eq:unitary_model} with both of the $R$-matrices being either $R^{\Gamma}_{\Gamma}$ or $R^{\Delta}_{\Delta}$ is equal to $\beta = z_i z_j$.
\end{proposition}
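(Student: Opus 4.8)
The plan is to compute the left-hand partition function of~\eqref{eq:unitary_model} directly for each boundary pair $(a,b)$, exploiting the fact that both $R^{\Gamma}_{\Gamma}$ and $R^{\Delta}_{\Delta}$ preserve colors. Since an $R$-matrix entry is nonzero only when the multiset of colors entering a crossing on the left equals the multiset leaving on the right, and no closed loops can form in this two-vertex configuration, it suffices to treat finitely many cases for $(a,b)$: both $0$; exactly one of them a single color; both equal to a single color; and both nonzero and distinct, with the two relative orderings. Color preservation across each of the two crossings forces the interior edges marked $\ast$, carrying colors $(p,q)$ (top and bottom), to satisfy $\{p,q\}=\{a,b\}$ as multisets; hence the only admissible interior states are the \emph{diagonal} state $(p,q)=(a,b)$ and the \emph{swapped} state $(p,q)=(b,a)$.

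First I would handle the diagonal interior state $(p,q)=(a,b)$, in which neither crossing permutes the two strands. Inspecting Figures~\ref{fig:colored_R_matrix_GG} and~\ref{fig:colored_R_matrix_DD} shows that every such ``strands stay on their levels'' weight equals a single spectral parameter of that vertex---either its first or its second parameter, according to which strand carries the dominant color---and never a difference $z_i-z_j$. The two crossings are in the same configuration and so select the same slot, but the right-hand crossing carries its parameters in the opposite order $(z_j,z_i)$ to the left-hand crossing's $(z_i,z_j)$. Hence the two selected values are $z_i$ and $z_j$ in one order or the other, and their product is $z_i z_j$ in every boundary case.

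The crux is to show that the swapped interior state $(p,q)=(b,a)$ contributes nothing. Here the first crossing must genuinely exchange the two strands, and so must the second in order to return to the boundary $(a,b)$. The key structural feature of these $R$-matrices is that such an exchange is one-directional: in $R^{\Gamma}_{\Gamma}$ the dominant color can move only from the lower strand to the upper strand (the entries taking the dominant color from top to bottom are absent), while in $R^{\Delta}_{\Delta}$ it can move only downward. Consequently, once the first crossing performs the permitted exchange, the resulting configuration presents the dominant color on the ``wrong'' side for the second crossing, whose corresponding entry is $0$; the product of the two weights therefore vanishes. I expect this vanishing to be the only delicate point, and it is precisely where the asymmetry of the colored weights---which is invisible in the uncolored model---does the work.

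Summing the two interior contributions then yields $z_i z_j + 0 = z_i z_j = \beta$ in every case, which is the claim. As this is ultimately a finite case analysis over at most a handful of colors, it can equally be confirmed by computer, exactly as in the proofs of Propositions~\ref{prop:YBE} and~\ref{prop:reflection}.
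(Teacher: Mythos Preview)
Your proposal is correct and follows the same approach as the paper: both observe that color preservation reduces the claim to a finite check over $a,b \in \{0, c, c'\}$, and the paper simply declares this a straightforward computation, whereas you spell out the two interior configurations and why the swapped one vanishes. One small inaccuracy: in $R^{\Delta}_{\Delta}$ the exchange is not uniformly ``downward'' for the dominant color (compare the $0$-versus-color entry with the $c$-versus-$c'$ entry in Figure~\ref{fig:colored_R_matrix_DD}), but what your argument actually needs---and what does hold---is that for each unordered pair $\{a,b\}$ only \emph{one} of the two exchange orientations is nonzero, so the second crossing is forced into the forbidden orientation and contributes $0$.
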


\begin{proof}
Note that we can restrict this to $a,b \in \{0 < c < c'\}$ since colors are preserved by the $R$-matrices.
Thus the claim is a straightforward and follows from a computation over all possible boundary conditions.
\end{proof}

\begin{proposition}
For a state in $\overline{\states}_{\lambda, w}$, the vertices $\tt{a}_2^{\dagger}$ and $\tt{k}_2$ correspond to inversions in $w_0 w$, the number of which equals $\ell(w_0 w)$.
\end{proposition}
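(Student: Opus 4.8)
The plan is to translate the statement into the combinatorics of colored paths and then into a wiring-diagram count of the inversions of $w_0 w$. First I would show that in any admissible state the colored edges organize into exactly $n$ paths. Inspecting the nonzero entries of Figures~\ref{fig:colored_gamma_weights}, \ref{fig:colored_delta_weights}, and~\ref{fig:colored_K_weights_BC}, one sees that at every vertex and every U-turn each color present occupies exactly two incident edges. Combined with the boundary data—colorless bottom and left-$\Gamma$ edges, the colors $c_{(ww_0)(1)}, \dots, c_{(ww_0)(n)}$ entering on the left $\Delta$ edges, and the unbarred colors $c_1, \dots, c_n$ leaving along the top—this forces each entering color to trace a single connected path to a top edge. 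Reading off the admissible turn vertices ($\tt{c}_1$, $\tt{c}_2$) shows each such path is monotone: it runs rightward along a $\Delta$ row, possibly rounds a U-turn onto a $\Gamma$ row, runs leftward, and then makes a single turn to run straight up to the top. Crucially, the color along a path is constant except at a U-turn of type $\tt{k}_2$, the only configuration where a barred color $\overline{c_i}$ and its unbarred partner $c_i$ meet.

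Next I would pin down the two distinguished configurations and fold the picture. A $\tt{k}_2$ U-turn is exactly a \emph{reflection}: it records that a path entered with a barred color on its $\Delta$ leg and switched to the unbarred color it needs to exit on top, so the number of $\tt{k}_2$ vertices equals the number of barred colors among $c_{(ww_0)(1)}, \dots, c_{(ww_0)(n)}$, i.e.\ the number of negative values of $w_0 w$ (equivalently $w w_0$, which has the same length). A vertex $\tt{a}_2^{\dagger}$, in either a $\Gamma$ or a $\Delta$ row, is exactly a transversal \emph{crossing} of two legs of two distinct paths; the other two-color vertices $\tt{a}_2$ and $\tt{a}_2^{\circ}$ are non-crossing. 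Reflecting the $\Gamma$/vertical portion of each path across the U-turn boundary then turns the $n$ paths into a centrally symmetric family of $2n$ strands whose two ends are prescribed by $w_0 w$. Under this folding a crossing of two unbarred legs is an inversion $e_i - e_j$, a crossing of a barred leg with an unbarred leg is an inversion $e_i + e_j$, and a $\tt{k}_2$ reflection is an inversion $e_i$ (type $B$) or $2e_i$ (type $C$); these three families exhaust the positive roots that $w_0 w$ sends to negative roots. Hence the $\tt{a}_2^{\dagger}$ and $\tt{k}_2$ vertices map to distinct inversions of $w_0 w$, and since the boundary—and so $w_0 w$—depends only on $\lambda$ and $w$, the resulting count is independent of the chosen state.

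The main obstacle is to upgrade this into a genuine bijection, equivalently to prove that the path system is \emph{reduced}: no two strands cross twice and no path reflects more than once. This is exactly where the monotonicity extracted in the first step does the work, since each leg is a monotone run, two strands meet in at most one $\tt{a}_2^{\dagger}$ vertex and each path meets the U-turn boundary at most once, so the number of crossings and reflections is the minimum compatible with the prescribed endpoints, namely $\ell(w_0 w)$. The delicate points are to exclude backtracking and double crossings using only the list of admissible Boltzmann weights, and to verify that the barred--unbarred crossings reproduce precisely the $e_i + e_j$ inversions with no double counting. A convenient way to organize the argument is to verify the base case $w = 1$, where $w_0 w = w_0$ inverts all $n^2 = \ell(w_0)$ positive roots and $\overline{\states}_{\lambda,1}$ has the unique state of Figure~\ref{fig:ground_state}, and then track how both sides change under multiplying $w$ by a simple reflection.
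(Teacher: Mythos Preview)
Your monotonicity claim is wrong, and this is where the direct bijective argument breaks. You assert that a path ``runs rightward along a $\Delta$ row, possibly rounds a U-turn onto a $\Gamma$ row, runs leftward, and then makes a single turn to run straight up to the top.'' But the admissible $\Gamma$ and $\Delta$ vertices contain \emph{no} configuration with only the top and bottom edges colored: there is no way to pass straight through a row vertically unless another color is present horizontally. A path can only advance upward by zig-zagging---right on each $\Gamma$ row, left on each $\Delta$ row---possibly traversing several U-turns along the way. Indeed in the first state for $s_1 s_2$ in Example~\ref{ex:atom_21} the red path goes through \emph{two} U-turns (one $\tt{k}_2$ and one $\tt{k}_3$), directly contradicting your description. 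Since your ``reduced wiring diagram'' argument rests on the claim that each leg is a single monotone run, the no-double-crossing conclusion is not justified: once paths are allowed to zig-zag, two colors can a priori meet on several different rows, and ruling this out requires a separate argument tied to the specific color constraints in $\tt{a}_2$, $\tt{a}_2^{\dagger}$, and the $K$-matrix.

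The fix you suggest at the end---check $w=1$ and then track what happens under left multiplication by a simple reflection---is exactly the paper's proof: a straightforward induction on $\ell(w)$ using the boundary conditions and the allowed vertex list. That argument does go through, but it is the paper's route rather than the direct bijection you set out to give. If you want to salvage the wiring-diagram picture, you would need to replace the false ``single turn, straight up'' description with the actual zig-zag trajectory and then prove reducedness from the interlacing of colors forced by the $\tt{a}_2$/$\tt{a}_2^{\dagger}$ constraints and the barred/unbarred rule at the $K$-matrix; this is doable but is not what you wrote.
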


\begin{proof}
This can be shown by a straightforward induction on $\ell(w)$ using the boundary conditions and vertices of the lattice model.
\end{proof}

\subsection{Billiards}\label{sec:billiards}

Our first goal is to prove the following ``type A'' functional equation for the partition function.

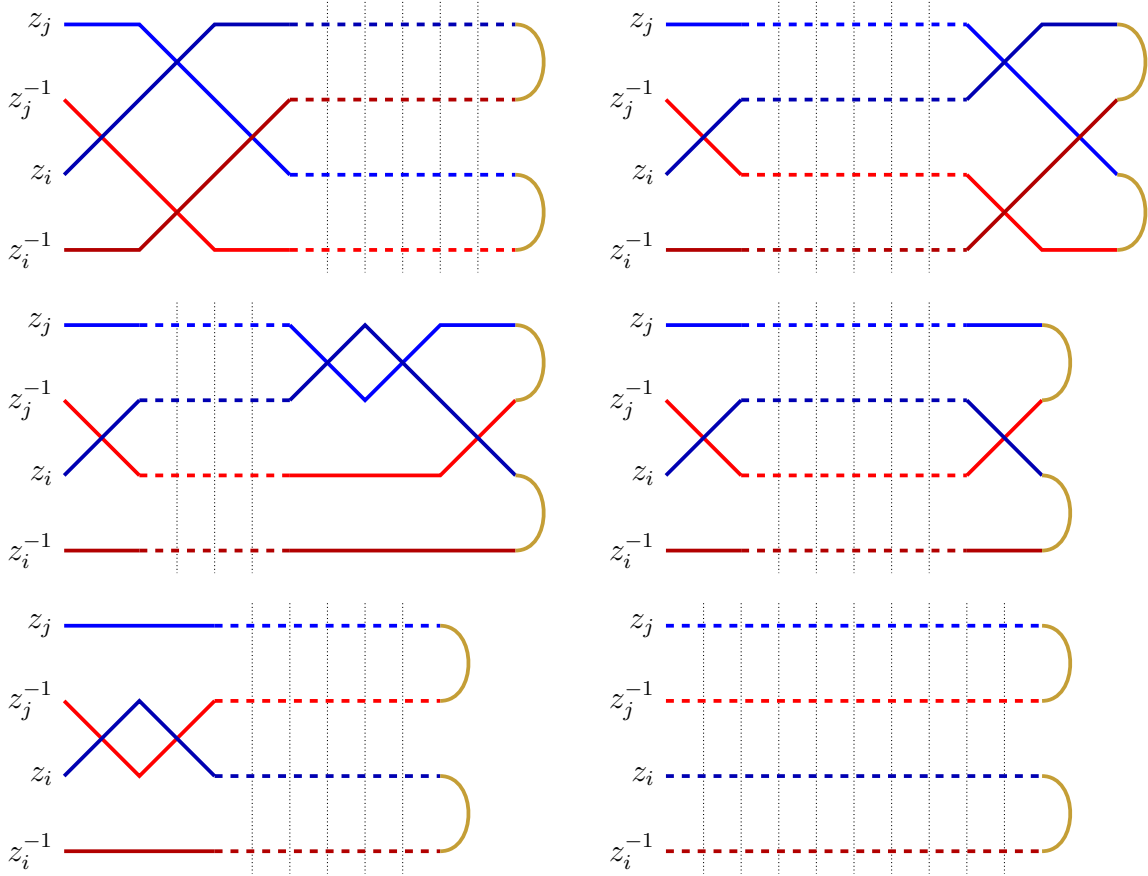
\begin{figure}
\[
\begin{tikzpicture}
  \begin{scope}
  \draw node[anchor=east] at (-2,4) {$z_j$};
  \draw node[anchor=east] at (-2,3) {$z_j^{-1}$};
  \draw node[anchor=east] at (-2,2) {$z_i$};
  \draw node[anchor=east] at (-2,1) {$z_i^{-1}$};
  \foreach \x in {1.5,2,...,3.5}
    \draw[densely dotted] (\x, 4.3) -- (\x,1-0.3);
  \draw[-, line width=0.5mm, blue] (-2,4) -- (-1,4) -- (1,2);
  \draw[-, line width=0.5mm, red] (-2,3) -- (0,1) -- (1,1);
  \draw[-, line width=0.5mm, darkblue] (-2,2) -- (0,4) -- (1,4);
  \draw[-, line width=0.5mm, darkred] (-2,1) -- (-1,1) -- (1,3);
  \draw[dashed, line width=0.5mm, blue] (1,2) -- (4,2);
  \draw[dashed, line width=0.5mm, red] (1,1) -- (4,1);
  \draw[dashed, line width=0.5mm, darkblue] (1,4) -- (4,4);
  \draw[dashed, line width=0.5mm, darkred] (1,3) -- (4,3);
  \draw[-, line width=0.5mm, UQgold] (4,4) .. controls (4.5,4) and (4.5,3) .. (4,3);
  \draw[-, line width=0.5mm, UQgold] (4,2) .. controls (4.5,2) and (4.5,1) .. (4,1);
  \end{scope}
  \begin{scope}[xshift=8cm]
  \draw node[anchor=east] at (-2,4) {$z_j$};
  \draw node[anchor=east] at (-2,3) {$z_j^{-1}$};
  \draw node[anchor=east] at (-2,2) {$z_i$};
  \draw node[anchor=east] at (-2,1) {$z_i^{-1}$};
  \foreach \x in {-0.5,0,...,1.5}
    \draw[densely dotted] (\x, 4.3) -- (\x,1-0.3);
  \draw[-, line width=0.5mm, blue] (-2,4) -- (-1,4);
  \draw[-, line width=0.5mm, red] (-2,3) -- (-1,2);
  \draw[-, line width=0.5mm, darkblue] (-2,2) -- (-1,3);
  \draw[-, line width=0.5mm, darkred] (-2,1) -- (-1,1);
  \draw[dashed, line width=0.5mm, blue] (-1,4) -- (2,4);
  \draw[dashed, line width=0.5mm, red] (-1,2) -- (2,2);
  \draw[dashed, line width=0.5mm, darkblue] (-1,3) -- (2,3);
  \draw[dashed, line width=0.5mm, darkred] (-1,1) -- (2,1);
  \draw[-, line width=0.5mm, blue] (2,4) -- (4,2);
  \draw[-, line width=0.5mm, red] (2,2) -- (3,1) -- (4,1);
  \draw[-, line width=0.5mm, darkblue] (2,3) -- (3,4) -- (4,4);
  \draw[-, line width=0.5mm, darkred] (2,1) -- (4,3);
  \draw[-, line width=0.5mm, UQgold] (4,4) .. controls (4.5,4) and (4.5,3) .. (4,3);
  \draw[-, line width=0.5mm, UQgold] (4,2) .. controls (4.5,2) and (4.5,1) .. (4,1);
  \end{scope}
  \begin{scope}[xshift=0cm, yshift=-4cm]
  \draw node[anchor=east] at (-2,4) {$z_j$};
  \draw node[anchor=east] at (-2,3) {$z_j^{-1}$};
  \draw node[anchor=east] at (-2,2) {$z_i$};
  \draw node[anchor=east] at (-2,1) {$z_i^{-1}$};
  \foreach \x in {-0.5,0,0.5}
    \draw[densely dotted] (\x, 4.3) -- (\x,1-0.3);
  \draw[-, line width=0.5mm, blue] (-2,4) -- (-1,4);
  \draw[-, line width=0.5mm, red] (-2,3) -- (-1,2);
  \draw[-, line width=0.5mm, darkblue] (-2,2) -- (-1,3);
  \draw[-, line width=0.5mm, darkred] (-2,1) -- (-1,1);
  \draw[dashed, line width=0.5mm, blue] (-1,4) -- (1,4);
  \draw[dashed, line width=0.5mm, red] (-1,2) -- (1,2);
  \draw[dashed, line width=0.5mm, darkblue] (-1,3) -- (1,3);
  \draw[dashed, line width=0.5mm, darkred] (-1,1) -- (1,1);
  \draw[-, line width=0.5mm, blue] (1,4) -- (2,3) -- (3,4) -- (4,4);
  \draw[-, line width=0.5mm, red] (1,2) -- (3,2) -- (4,3);
  \draw[-, line width=0.5mm, darkblue] (1,3) -- (2,4) -- (4,2);
  \draw[-, line width=0.5mm, darkred] (1,1) -- (4,1);
  \draw[-, line width=0.5mm, UQgold] (4,4) .. controls (4.5,4) and (4.5,3) .. (4,3);
  \draw[-, line width=0.5mm, UQgold] (4,2) .. controls (4.5,2) and (4.5,1) .. (4,1);
  \end{scope}
  \begin{scope}[xshift=8cm, yshift=-4cm]
  \draw node[anchor=east] at (-2,4) {$z_j$};
  \draw node[anchor=east] at (-2,3) {$z_j^{-1}$};
  \draw node[anchor=east] at (-2,2) {$z_i$};
  \draw node[anchor=east] at (-2,1) {$z_i^{-1}$};
  \foreach \x in {-0.5,0,...,1.5}
    \draw[densely dotted] (\x, 4.3) -- (\x,1-0.3);
  \draw[-, line width=0.5mm, blue] (-2,4) -- (-1,4);
  \draw[-, line width=0.5mm, red] (-2,3) -- (-1,2);
  \draw[-, line width=0.5mm, darkblue] (-2,2) -- (-1,3);
  \draw[-, line width=0.5mm, darkred] (-2,1) -- (-1,1);
  \draw[dashed, line width=0.5mm, blue] (-1,4) -- (2,4);
  \draw[dashed, line width=0.5mm, red] (-1,2) -- (2,2);
  \draw[dashed, line width=0.5mm, darkblue] (-1,3) -- (2,3);
  \draw[dashed, line width=0.5mm, darkred] (-1,1) -- (2,1);
  \draw[-, line width=0.5mm, blue] (2,4) -- (3,4);
  \draw[-, line width=0.5mm, red] (2,2) -- (3,3);
  \draw[-, line width=0.5mm, darkblue] (2,3) -- (3,2);
  \draw[-, line width=0.5mm, darkred] (2,1) -- (3,1);
  \draw[-, line width=0.5mm, UQgold] (3,4) .. controls (3.5,4) and (3.5,3) .. (3,3);
  \draw[-, line width=0.5mm, UQgold] (3,2) .. controls (3.5,2) and (3.5,1) .. (3,1);
  \end{scope}
  \begin{scope}[xshift=0cm, yshift=-8cm]
  \draw node[anchor=east] at (-2,4) {$z_j$};
  \draw node[anchor=east] at (-2,3) {$z_j^{-1}$};
  \draw node[anchor=east] at (-2,2) {$z_i$};
  \draw node[anchor=east] at (-2,1) {$z_i^{-1}$};
  \foreach \x in {0.5,1,...,2.5}
    \draw[densely dotted] (\x, 4.3) -- (\x,1-0.3);
  \draw[-, line width=0.5mm, blue] (-2,4) -- (0,4);
  \draw[-, line width=0.5mm, red] (-2,3) -- (-1,2) -- (0,3);
  \draw[-, line width=0.5mm, darkblue] (-2,2) -- (-1,3) -- (0,2);
  \draw[-, line width=0.5mm, darkred] (-2,1) -- (0,1);
  \draw[dashed, line width=0.5mm, blue] (0,4) -- (3,4);
  \draw[dashed, line width=0.5mm, darkblue] (0,2) -- (3,2);
  \draw[dashed, line width=0.5mm, red] (0,3) -- (3,3);
  \draw[dashed, line width=0.5mm, darkred] (0,1) -- (3,1);
  \draw[-, line width=0.5mm, UQgold] (3,4) .. controls (3.5,4) and (3.5,3) .. (3,3);
  \draw[-, line width=0.5mm, UQgold] (3,2) .. controls (3.5,2) and (3.5,1) .. (3,1);
  \end{scope}
  \begin{scope}[xshift=8cm, yshift=-8cm]
  \draw node[anchor=east] at (-2,4) {$z_j$};
  \draw node[anchor=east] at (-2,3) {$z_j^{-1}$};
  \draw node[anchor=east] at (-2,2) {$z_i$};
  \draw node[anchor=east] at (-2,1) {$z_i^{-1}$};
  \foreach \x in {-1.5,-1,...,2.5}
    \draw[densely dotted] (\x, 4.3) -- (\x,1-0.3);
  \draw[dashed, line width=0.5mm, blue] (-2,4) -- (3,4);
  \draw[dashed, line width=0.5mm, darkblue] (-2,2) -- (3,2);
  \draw[dashed, line width=0.5mm, red] (-2,3) -- (3,3);
  \draw[dashed, line width=0.5mm, darkred] (-2,1) -- (3,1);
  \draw[-, line width=0.5mm, UQgold] (3,4) .. controls (3.5,4) and (3.5,3) .. (3,3);
  \draw[-, line width=0.5mm, UQgold] (3,2) .. controls (3.5,2) and (3.5,1) .. (3,1);
  \end{scope}
\end{tikzpicture}
\]
\caption{Pictorial description of the sequence of steps to compute the action of the $i$-th atom operator. Note that all the $\Gamma$ rows are colored in blue and all the $\Delta$ rows are colored in red. The R-matrices are determined by the color on the row. For example the left most $R$-matrix in the first step is the $R^\Gamma_\Delta$ $R$-matrix.}
\label{fig:ping_pong}
\end{figure}

\begin{lemma}
\label{lemma:type_A_relation}
Choose $i < n$, $j=i+1$ and $w \in W$ such that $\ell(s_i w) = \ell(w) + 1$.
Then we have
\begin{equation}\label{eq:typeAfe}
(z_i - z_j) Z(\overline{\states}_{\lambda, s_i w}; \zz) =  z_j \bigl( Z(\overline{\states}_{\lambda, w}; \zz) - z_i z_j^{-1} Z(\overline{\states}_{\lambda, w}; s_i \zz) \bigr).
\end{equation}
\end{lemma}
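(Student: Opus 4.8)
The plan is to prove~\eqref{eq:typeAfe} by a modified train argument carried out on the four rows of the model carrying the parameters $z_i, z_i^{-1}, z_j, z_j^{-1}$ (the pairs indexed by $i$ and $j=i+1$); the remaining rows are spectators and play no role in the interchange. Following~\cite{Gray17,Ivanov12}, I would attach on the left of the model a \emph{block} $R$-matrix built from all four of our $R$-matrices, whose net effect is to interchange these two pairs of rows and hence to send the bulk parameters $\zz \mapsto s_i\zz$. As recorded in the caption of Figure~\ref{fig:ping_pong}, the crossing closest to the boundary is the problematic $R^{\Gamma}_{\Delta}$, while the three interior crossings are of the types $R^{\Gamma}_{\Gamma}$, $R^{\Delta}_{\Delta}$, and $R^{\Delta}_{\Gamma}$. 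The strategy is to track the partition function of this capped configuration through the sequence of local moves in Figure~\ref{fig:ping_pong}, each of which alters it only by an explicit scalar, and to read off the final identity by comparing the two ends.

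First I would use the train argument, i.e.\ repeated application of the Yang--Baxter equation of Proposition~\ref{prop:YBE}, to transport the three \emph{solvable} crossings $R^{\Gamma}_{\Gamma}$, $R^{\Delta}_{\Delta}$, and $R^{\Delta}_{\Gamma}$ rightward across the bulk to the U-turns, leaving the single $R^{\Gamma}_{\Delta}$ crossing pinned against the left boundary (since no Yang--Baxter equation is available to move it). At the U-turn end I would then invoke the reflection equation of Proposition~\ref{prop:reflection}, which brings two of these $R$-matrices together across the $K$-matrices at the cost of the factor $\alpha = z_i^{-2}$, and annihilate the resulting pair using the unitary relation of Proposition~\ref{prop:unitary}, contributing $\beta = z_i z_j$. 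This is precisely the step that substitutes for the missing fourth Yang--Baxter equation.

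Next I would pass the one surviving $R$-matrix, an $R^{\Delta}_{\Gamma}$ whose Yang--Baxter equation \emph{is} available, back across the bulk to the left, where it meets the pinned $R^{\Gamma}_{\Delta}$. Their composition is a double crossing of the same two rows, which I would remove by the corresponding (mixed) unitary relation, leaving the clean model of the last panel of Figure~\ref{fig:ping_pong}. At this point the bulk has been traversed with the interchanged parameters, and the two terms on the right are extracted by resolving the far-left boundary: because the left $\Gamma$-edges are $0$ and the left $\Delta$-edges carry the colors of $w w_0$, the boundary admits at most two states, and the hypothesis $\ell(s_i w)=\ell(w)+1$ guarantees that exactly the non-crossing and crossing resolutions survive. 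Weighting these by the corresponding $R$-matrix entries produces $z_j\,Z(\overline{\states}_{\lambda,w};\zz)$ and $-z_i\,Z(\overline{\states}_{\lambda,w};s_i\zz)$, while the uncancelled crossing weight collects as the coefficient $(z_i-z_j)$ of $Z(\overline{\states}_{\lambda,s_iw};\zz)$, yielding~\eqref{eq:typeAfe} once the scalar factors above are combined.

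The hard part is organizing the argument so that the nonexistent $R^{\Gamma}_{\Delta}$ Yang--Baxter equation is never needed: the $R^{\Gamma}_{\Delta}$ crossing must be kept against the boundary throughout and only ever disposed of by the unitary cancellation with the returning $R^{\Delta}_{\Gamma}$. The delicate points are therefore (a) checking that the three valid Yang--Baxter equations genuinely suffice to reach a configuration in which Propositions~\ref{prop:reflection} and~\ref{prop:unitary} apply, (b) verifying that the scalar bookkeeping from the reflection factor $z_i^{-2}$, the unitary factor $z_i z_j$, and the mixed cancellation combine to the exact coefficients in~\eqref{eq:typeAfe}, and (c) confirming, using the length hypothesis, that the far-left boundary resolves into precisely the two claimed partition functions and no others. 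These are finite, if intricate, verifications of the kind already reduced to computation in Propositions~\ref{prop:YBE}--\ref{prop:unitary}.
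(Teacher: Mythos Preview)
Your overall strategy is exactly the paper's: attach the block of four $R$-matrices on the left, push the three solvable ones to the right via Proposition~\ref{prop:YBE}, apply the reflection equation (factor $\alpha=z_i^{-2}$) and the unitary equation (factor $\beta=z_iz_j$), push the surviving $R^{\Delta}_{\Gamma}$ back, and cancel it against the pinned $R^{\Gamma}_{\Delta}$ by a weak unitary relation (factor $\gamma=z_i^2$). All of that is correct and matches the paper step for step.

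Where your account goes wrong is the bookkeeping in the last paragraph. The three partition functions in~\eqref{eq:typeAfe} do not all arise from ``resolving the far-left boundary'' at a single stage. Rather, the \emph{initial} evaluation of the double $R$-matrix against the fixed left boundary (with colors $c>c'$ forced by $\ell(s_iw)=\ell(w)+1$) already produces two terms, proportional to $Z(\overline{\states}_{\lambda,s_iw};\zz)$ and $Z(\overline{\states}_{\lambda,w};\zz)$, both with the \emph{original} parameters $\zz$. After the full sequence of moves, the configuration becomes a single clean model with the interchanged parameters, contributing $\alpha\beta\gamma\,Z(\overline{\states}_{\lambda,w};s_i\zz)$. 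Equating the initial and final expressions and simplifying yields~\eqref{eq:typeAfe}. No single boundary resolution can produce terms with both $\zz$ and $s_i\zz$; those come from opposite ends of the chain in Figure~\ref{fig:ping_pong}. Once you reorganize the accounting this way, your sketch is complete.
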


\begin{proof}
We prove the functional equation by following the sequence of steps pictured in Figure~\ref{fig:ping_pong}).
In each step we exhibit a model, and models in two consecutive steps will have the same partition function, possibly up to some factor.
Finally, by comparing the partition functions of the first and the last models, we prove the result. 
\begin{enumerate}
\item In the first step of Figure~\ref{fig:ping_pong} we add the ``double $R$-matrix'' to the left of the model $\overline{\states}_{\lambda, s_iw}$. 
	With the imposed left boundary condition, the double $R$-matrix must be in one of the following two admissible configurations:
	\[
	\begin{tikzpicture}[scale=0.7]
	\draw (0,-2) -- (2, -2);
	\draw (0,4) -- (2, 4);
	\draw (4,-2) -- (6, -2);
	\draw (4,4) -- (6, 4);
	\draw (0,0) to [out = 0, in = 180] (2,2);
	\draw (0,2) to [out = 0, in = 180] (2,0);
	\draw (2,2) to [out = 0, in = 180] (4,4);
	\draw (2,4) to [out = 0, in = 180] (4,2);
	\draw (2,-2) to [out = 0, in = 180] (4,0);
	\draw (2,0) to [out = 0, in = 180] (4,-2);
	\draw (4,0) to [out = 0, in = 180] (6,2);
	\draw (4,2) to [out = 0, in = 180] (6,0);
	\foreach \x in {0,2,4,6} {
	  \foreach \y in {-2,0,2,4} {
	  \draw[fill=white] (\x,\y) circle (.35);
	  }
	}
	\foreach \x/\y in {0/-2, 2/-2, 4/0, 6/2} {
	  \draw[line width=0.5mm, red, fill=white] (\x,\y) circle (.35);
	  \node at (\x,\y) {$c$};
	}
	\foreach \x/\y in {0/2, 2/0, 4/-2, 6/-2} {
	  \draw[line width=0.5mm, blue, fill=white] (\x,\y) circle (.35);
	  \node at (\x,\y) {$c'$};
	}
	\node at (0,0) {$0$};
	\node at (0,4) {$0$};
	\node at (2,2) {$0$};
	\node at (2,4) {$0$};
	\node at (4,2) {$0$};
	\node at (4,4) {$0$};
	\node at (6,0) {$0$};
	\node at (6,4) {$0$};
	\path[fill=white] (1,1) circle (.4);
	\node at (1.1,1) {\scriptsize $z_i,z_j^{-1}$};
	\path[fill=white] (3,3) circle (.4);
	\node at (3,3) {\scriptsize $z_i,z_j$};
	\path[fill=white] (3,-1) circle (.4);
	\node at (3,-1.05) {\scriptsize $z_i^{-1},z_j^{-1}$};
	\path[fill=white] (5,1) circle (.4);
	\node at (5,1) {\scriptsize $z_i^{-1},z_j$};
	\end{tikzpicture}
	\qquad\qquad
	\begin{tikzpicture}[scale=0.7]
	\draw (0,-2) -- (2, -2);
	\draw (0,4) -- (2, 4);
	\draw (4,-2) -- (6, -2);
	\draw (4,4) -- (6, 4);
	\draw (0,0) to [out = 0, in = 180] (2,2);
	\draw (0,2) to [out = 0, in = 180] (2,0);
	\draw (2,2) to [out = 0, in = 180] (4,4);
	\draw (2,4) to [out = 0, in = 180] (4,2);
	\draw (2,-2) to [out = 0, in = 180] (4,0);
	\draw (2,0) to [out = 0, in = 180] (4,-2);
	\draw (4,0) to [out = 0, in = 180] (6,2);
	\draw (4,2) to [out = 0, in = 180] (6,0);
	\foreach \x in {0,2,4,6} {
	  \foreach \y in {-2,0,2,4} {
	  \draw[fill=white] (\x,\y) circle (.35);
	  }
	}
	\foreach \x/\y in {0/-2, 2/-2, 4/-2, 6/-2} {
	  \draw[line width=0.5mm, red, fill=white] (\x,\y) circle (.35);
	  \node at (\x,\y) {$c$};
	}
	\foreach \x/\y in {0/2, 2/0, 4/0, 6/2} {
	  \draw[line width=0.5mm, blue, fill=white] (\x,\y) circle (.35);
	  \node at (\x,\y) {$c'$};
	}
	\node at (0,0) {$0$};
	\node at (0,4) {$0$};
	\node at (2,2) {$0$};
	\node at (2,4) {$0$};
	\node at (4,2) {$0$};
	\node at (4,4) {$0$};
	\node at (6,0) {$0$};
	\node at (6,4) {$0$};
	\path[fill=white] (1,1) circle (.4);
	\node at (1.1,1) {\scriptsize $z_i,z_j^{-1}$};
	\path[fill=white] (3,3) circle (.4);
	\node at (3,3) {\scriptsize $z_i,z_j$};
	\path[fill=white] (3,-1) circle (.4);
	\node at (3,-1.05) {\scriptsize $z_i^{-1},z_j^{-1}$};
	\path[fill=white] (5,1) circle (.4);
	\node at (5,1) {\scriptsize $z_i^{-1},z_j$};
	\end{tikzpicture}
	\]
	Here the colors satisfy ${\color{red} c} > {\color{blue} c'}$ because $\ell(s_i w) = \ell(w) + 1$.
	We conclude that the partition function of the model in the first step is
	\[
	z_i (z_i^{-1} - z_j^{-1}) z_j z_j Z(\overline{\states}_{\lambda, s_i w}; \zz) + z_i z_i^{-1} z_j z_j Z(\overline{\states}_{\lambda, w}; \zz). 
	\]
\item To obtain the second model, we pass the three $R$-matrices to the right by using the Yang--Baxter equation in Proposition~\ref{prop:YBE}, leaving behind the $R^{\Gamma}_{\Delta}$-matrix on the left.
\item We apply the reflection equation (Proposition~\ref{prop:reflection}) to the $R^{\Delta}_{\Delta}$-matrix and $R^{\Delta}_{\Gamma}$-matrix.
	This contributes a factor of $\alpha=z_i^{-2}$.
\item We apply the unitary equation for the square of the $R^{\Gamma}_{\Gamma}$-matrix.
	This contributes a factor of $\beta = z_i z_j$ to the partition function (Proposition~\ref{prop:unitary}).
\item We use the Yang--Baxter equation to pass the remaining $R^{\Delta}_{\Gamma}$-matrix back.
\item We use a weak version of the unitary equation for the $R^{\Delta}_{\Gamma}$-matrix and $R^{\Gamma}_{\Delta}$-matrix with a boundary condition ${\color{blue} c'}$ on top left and top right and $0$ on bottom left and bottom right.
	In terms of Equation~\eqref{eq:unitary_model}, we consider $a = {\color{blue} c'}$ and $b = 0$.
	We only need to consider this boundary equation here because of the boundary conditions for $\states_{\lambda, s_i w}$.
	This contributes a factor of $\gamma = z_i^2$ times the partition function of $Z(\states_{\lambda, s_i w}; s_i \zz)$.
\end{enumerate}
Comparing the initial and final models up to the contributions highlighted above, we obtain the equations
\[
z_i (z_i^{-1} - z_j^{-1}) z_j z_j Z(\overline{\states}_{\lambda, s_i w}; \zz) + z_i z_i^{-1} z_j z_j Z(\overline{\states}_{\lambda, w}; \zz)  =  \alpha \beta \gamma Z(\overline{\states}_{\lambda, s_i w}; s_i \zz).
\]
Using the fact that $\alpha=z_i^{-2}, \beta = z_i z_j$, and $\gamma = z_i^2$ and some basic algebra we can manipulate the equation above to obtain the desired result~\eqref{eq:typeAfe}.
%
\end{proof}

\subsection{Ichthyology}\label{sec:ichthyology}


We now study a version of the fish equation that we use to show a functional equation corresponding to the last simple reflection $s_n$.
We do not actually prove the usual fish equation, but instead dissect it to its component pieces to obtain the desired functional equation.

\begin{lemma}
\label{lemma:type_BC_relation}
Choose $w \in W$ such that $\ell(s_n w) = \ell(w) + 1$.
Then we have
\begin{align*}
(z_n^2 - 1) Z(\overline{\states}_{\lambda, s_n w}; \zz) & = Z(\overline{\states}_{\lambda, w}; \zz) - Z(\overline{\states}_{\lambda, w}; s_n \zz) && (\text{type C}), \\
(z_n - 1) Z(\overline{\states}_{\lambda, s_n w}; \zz) & = Z(\overline{\states}_{\lambda, w}; \zz) - Z(\overline{\states}_{\lambda, w}; s_n \zz) && (\text{type B}).
\end{align*}
\end{lemma}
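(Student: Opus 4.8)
The plan is to establish the identity by localizing the effect of $s_n$ to the bottom of the model and then replacing the (failed) fish equation of~\cite{Ivanov12} by a bijection followed by a train argument with the $R^{\Gamma}_{\Gamma}$-matrix. Since $s_n$ acts by $z_n \leftrightarrow z_n^{-1}$, the only features of $\overline{\states}_{\lambda, s_n w}$ and $\overline{\states}_{\lambda, w}$ that it disturbs are the bottom $\Gamma$ row (line $2n-1$, parameter $z_n$), the bottom $\Delta$ row (line $2n$, parameter $z_n^{-1}$), and the U-turn joining them; every row above is common to both models. First I would fix the interior edges entering the bottom two rows from above and regard them as variable data summed over in the full partition function, reducing the claim to an analysis of the two bottom rows together with their U-turn. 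The hypothesis $\ell(s_n w) = \ell(w) + 1$ fixes the color reflected through the bottom U-turn, and hence the bottom left $\Delta$ boundary edge; this is precisely the datum by which the boundary conditions for $w$ and $s_n w$ differ.

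The obstruction to proceeding as in the uncolored case is that the genuine fish equation fails here, reflecting the same phenomenon that obstructs a $\Gamma\Delta$ $R$-matrix: a colored loop can form at the U-turn and multiply one side by the number of colors, which the other side cannot match. To circumvent this I would follow~\cite{Ivanov12} and first apply the state bijection converting the bottom $\Delta$ row into a $\Gamma$ row. This bijection is available because the bottom boundary edges are all $0$, and it interchanges the $\Delta$-weights of Figure~\ref{fig:colored_delta_weights} with the $\Gamma$-weights of Figure~\ref{fig:colored_gamma_weights} while leaving the partition function unchanged. Its purpose is to replace the problematic $\Gamma\Delta$ U-turn by one whose two incident rows are both of $\Gamma$ type, so that the $R^{\Gamma}_{\Gamma}$-matrix of Figure~\ref{fig:colored_R_matrix_GG}, which does satisfy the Yang--Baxter equation by Proposition~\ref{prop:YBE}, becomes available.

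With both bottom rows now of $\Gamma$ type, I would insert an $R^{\Gamma}_{\Gamma}$-matrix carrying the parameters of the two rows at their left and push it across the model to the U-turn using the $\Gamma\Gamma$ Yang--Baxter equation. With the $s_n w$ left boundary, the inserted matrix resolves into admissible configurations whose weighted sum is a combination of $Z(\overline{\states}_{\lambda, s_n w}; \zz)$ and $Z(\overline{\states}_{\lambda, w}; \zz)$, exactly as in the first step of the proof of Lemma~\ref{lemma:type_A_relation}. At the junction of the $R$-matrix with the U-turn I would then carry out a direct case analysis of the admissible colorings, the upshot being a weighted copy of $Z(\overline{\states}_{\lambda, w}; s_n \zz)$. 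Collecting the Boltzmann weights from the two ends, the sole input distinguishing the two Cartan types is the $\tt{k}_1$ entry of the U-turn, equal to $z_n^{-2}$ in type $C$ and $z_n^{-2} + z_n^{-1}$ in type $B$; this is what converts the common difference $Z(\overline{\states}_{\lambda, w}; \zz) - Z(\overline{\states}_{\lambda, w}; s_n \zz)$ into the prefactors $z_n^2 - 1$ and $z_n - 1$.

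I expect the case analysis at the U-turn to be the principal obstacle. Because the fish equation is unavailable I cannot simply slide the $R$-matrix around the U-turn, so I must verify by hand that the admissible configurations of the $R^{\Gamma}_{\Gamma}$-matrix against the $K$-matrix pair up in such a way that every color-dependent (loop) contribution cancels and only the two clean terms survive. The most delicate checks will be that the $\Delta$-to-$\Gamma$ bijection is compatible both with the $R^{\Gamma}_{\Gamma}$-weights and with the boundary relabeling $w \mapsto s_n w$, and that the final weight bookkeeping reproduces the stated prefactors with no residual factor counting the colors. Once these are confirmed, comparing the partition functions of the initial and final models yields the two stated identities.
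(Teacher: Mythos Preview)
Your overall strategy --- convert the bottom $\Delta$ row to $\Gamma$, insert an $R^{\Gamma}_{\Gamma}$-matrix, train it across, and analyze the local configurations at the U-turn --- is exactly the paper's approach. However, the way you distribute the terms between the left and right ends is wrong, and this is not a cosmetic issue: it reflects a misreading of what the $\Delta\!\to\!\Gamma$ bijection does to the boundary.

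After the bijection, the colored left $\Delta$ edge becomes $0$, so \emph{both} left boundary edges of the bottom two rows are $0$. Inserting $R^{\Gamma}_{\Gamma}$ on the left with outer boundary $(0,0)$ forces the inner boundary to be $(0,0)$ as well, and the $R$-matrix contributes only the scalar $z_n^{-1}$. You therefore cannot obtain a combination of $Z(\overline{\states}_{\lambda, s_n w}; \zz)$ and $Z(\overline{\states}_{\lambda, w}; \zz)$ on the left ``exactly as in the first step of the proof of Lemma~\ref{lemma:type_A_relation}'': that mechanism relied on two colors sitting on the left boundary, which is precisely what the bijection has erased here. The datum distinguishing $w$ from $s_n w$ (barred versus unbarred color) has migrated to the $K$-matrix end.

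Consequently the paper runs the argument in the opposite direction from what you sketch. It starts from the model for $w$, decomposes $Z(\overline{\states}_{\lambda, w}; \zz)$ according to the three admissible $K^{\Gamma}_{\Gamma}$ entries $\overline{\tt h}_1,\tt h_2,\overline{\tt h}_2$, attaches the $R$-matrix on the left (scalar $z_n^{-1}$), and trains it right. The five admissible ``fish'' at the right then recombine into $Z(\overline{\states}_{\lambda, w}; s_n \zz)$ plus a multiple of $Z(\overline{\states}_{\lambda, s_n w}; s_n \zz)$, provided the $\tt k_1$ weight $K$ satisfies $(K z_n^2 + z_n^2 - 1)\overline{K}^{-1} = 1$; both $K = z_n^{-2}$ (type $C$) and $K = z_n^{-2}+z_n^{-1}$ (type $B$) do. Your concern about colored loops is also misplaced: no loop forms in this two-row analysis, since the single color enters from above and exits at an edge.
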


\begin{proof}
The proof is the same for both cases outside of one computation where the $K$-matrix appears (recall that the L-matrix weights are the same for type $B$ and type $C$). 
We proceed with one proof and be precise where the difference between types occur.
Subsequently, we denote the Boltzmann weight of $\tt{k}_1$ by $K$.
 
In this proof, we work with models consisting of two rows connected by a U-turn on the right. 
These can be thought of as the last two rows in our previous model; therefore we can use the fact that at most one unbarred and one barred colors will appear in this model (and if both do, they will be a pair, \textit{i.e.}, the barred color will be the bar of the unbarred color).
We first follow the idea in~\cite{Ivanov12} to modify the model from a $^{\Gamma}_{\Delta}$ model to a $^{\Gamma}_{\Gamma}$ model.
To do this, for each admissible state in the row, we interchange the color and non-color in the last row $d \leftrightarrow 0$. 
Thus we are now using the $\Gamma$-weights in Figure~\ref{fig:colored_gamma_weights} for the last row.
We will now show that the partition function does not change under this process.

Note that the weights of every entry of the $\Delta$ $L$-matrix with the bottom value being $0$ equals the weights of the corresponding $\Gamma$ $L$-matrix with the interchanging $d \leftrightarrow 0$.
From the boundary conditions, this induces a bijection on the states of the model and preserves the Boltzmann weight contributions from the $L$-matrices.
Therefore the bottom two rows of the model get interchanged as follows:
\begin{align*}
&
\begin{tikzpicture}[scale=0.7]
  \draw (2,1) to (4,1);
  \draw (2,3) to (4,3);
  \draw (3,0.25) to (3,3.75);
  \draw (7,0.25) to (7,3.75);
  \draw (6,1) to (8,1);
  \draw (6,3) to (8,3);
  \draw (8,3) to[out=0,in=90] (9.25,2);
  \draw (9.25,2) to[out=-90,in=0] (8,1);
  \fill[white] (9.25,2) circle (.3);
  \node at (9.25,2) {\scriptsize $z_n$};
  \draw[fill=white] (2-0.3,3) circle (.4);
  \draw[fill=white] (2-0.3,1) circle (.4);
  \draw[line width=0.6mm, UQgold] (2-0.3,1) circle (.4);
  \draw[fill=white] (8.3,3) circle (.4);
  \draw[fill=white] (8.3,1) circle (.4);
  \node at (2-0.3,1) {$d$};
  \node at (2-0.3,3) {$0$};
  \node at (5,3) {$\cdots$};
  \node at (5,1) {$\cdots$};
  \draw[densely dashed] (3,3.75) to (3,4.25);
  \draw[fill=white] (3,-0.1) circle (.4) node{$0$};
  \draw[densely dashed] (7,3.75) to (7,4.25);
  \draw[fill=white] (7,-0.1) circle (.4) node{$0$};
  \node at (8.3,3) {$0$};
  \node at (8.3,1) {$0$};
  \path[fill=white] (3,3) circle (.4);
  \node at (3,3) {\scriptsize$z_n$};
  \path[fill=white] (3,1) circle (.5);
  \node at (3,1) {\scriptsize$z_n^{-1}$};
  \path[fill=white] (7,3) circle (.4);
  \node at (7,3) {\scriptsize$z_n$};
  \path[fill=white] (7,1) circle (.5);
  \node at (7,1) {\scriptsize$z_n^{-1}$};
\end{tikzpicture}
&&
\begin{tikzpicture}[scale=0.7]
  \draw (2,1) to (4,1);
  \draw (2,3) to (4,3);
  \draw (3,0.25) to (3,3.75);
  \draw (7,0.25) to (7,3.75);
  \draw (6,1) to (8,1);
  \draw (6,3) to (8,3);
  \draw (8,3) to[out=0,in=90] (9.25,2);
  \draw[line width=0.6mm, UQgold] (9.25,2) to[out=-90,in=0] (8,1);
  \fill[white] (9.25,2) circle (.3);
  \node at (9.25,2) {\scriptsize $z_n$};
  \draw[fill=white] (2-0.3,3) circle (.4);
  \draw[fill=white] (2-0.3,1) circle (.4);
  \draw[fill=white] (8.3,3) circle (.4);
  \draw[line width=0.5mm,UQgold,fill=white] (8.3,1) circle (.4);
  \node at (2-0.3,1) {$0$};
  \node at (2-0.3,3) {$0$};
  \node at (5,3) {$\cdots$};
  \node at (5,1) {$\cdots$};
  \draw[densely dashed] (3,3.75) to (3,4.25);
  \draw[fill=white] (3,-0.1) circle (.4) node{$0$};
  \draw[densely dashed] (7,3.75) to (7,4.25);
  \draw[fill=white] (7,-0.1) circle (.4) node{$0$};
  \node at (8.3,3) {$0$};
  \node at (8.3,1) {$d$};
  \path[fill=white] (3,3) circle (.4);
  \node at (3,3) {\scriptsize$z_n$};
  \path[fill=white] (3,1) circle (.5);
  \node at (3,1) {\scriptsize$z_n^{-1}$};
  \path[fill=white] (7,3) circle (.4);
  \node at (7,3) {\scriptsize$z_n$};
  \path[fill=white] (7,1) circle (.5);
  \node at (7,1) {\scriptsize$z_n^{-1}$};
\end{tikzpicture}
\allowdisplaybreaks
\\ &
\begin{tikzpicture}[scale=0.7]
  \draw (2,1) to (4,1);
  \draw (2,3) to (4,3);
  \draw (3,0.25) to (3,3.75);
  \draw (7,0.25) to (7,3.75);
  \draw (6,1) to (8,1);
  \draw (6,3) to (8,3);
  \draw[line width=0.6mm, UQgold] (8,3) to[out=0,in=90] (9.25,2);
  \draw[line width=0.6mm, UQpurple] (9.25,2) to[out=-90,in=0] (8,1);
  \fill[white] (9.25,2) circle (.3);
  \node at (9.25,2) {\scriptsize $z_n$};
  \draw[fill=white] (2-0.3,3) circle (.4);
  \draw[line width=0.5mm,UQpurple,fill=white] (2-0.3,1) circle (.4);
  \draw[line width=0.5mm,UQgold,fill=white] (8.3,3) circle (.4);
  \draw[line width=0.5mm,UQpurple,fill=white] (8.3,1) circle (.4);
  \node at (2-0.3,1) {$\overline{u}$};
  \node at (2-0.3,3) {$0$};
  \node at (5,3) {$\cdots$};
  \node at (5,1) {$\cdots$};
  \draw[densely dashed] (3,3.75) to (3,4.25);
  \draw[fill=white] (3,-0.1) circle (.4) node{$0$};
  \draw[densely dashed] (7,3.75) to (7,4.25);
  \draw[fill=white] (7,-0.1) circle (.4) node{$0$};
  \node at (8.3,3) {$c$};
  \node at (8.3,1) {$\overline{u}$};
  \path[fill=white] (3,3) circle (.4);
  \node at (3,3) {\scriptsize$z_n$};
  \path[fill=white] (3,1) circle (.5);
  \node at (3,1) {\scriptsize$z_n^{-1}$};
  \path[fill=white] (7,3) circle (.4);
  \node at (7,3) {\scriptsize$z_n$};
  \path[fill=white] (7,1) circle (.5);
  \node at (7,1) {\scriptsize$z_n^{-1}$};
\end{tikzpicture}
&&
\begin{tikzpicture}[scale=0.7]
  \draw (2,1) to (4,1);
  \draw (2,3) to (4,3);
  \draw (3,0.25) to (3,3.75);
  \draw (7,0.25) to (7,3.75);
  \draw (6,1) to (8,1);
  \draw (6,3) to (8,3);
  \draw[line width=0.6mm, UQgold] (8,3) to[out=0,in=90] (9.25,2);
  \draw (9.25,2) to[out=-90,in=0] (8,1);
  \fill[white] (9.25,2) circle (.3);
  \node at (9.25,2) {\scriptsize $z_n$};
  \draw[fill=white] (2-0.3,3) circle (.4);
  \draw[fill=white] (2-0.3,1) circle (.4);
  \draw[line width=0.5mm,UQgold,fill=white] (8.3,3) circle (.4);
  \draw[fill=white] (8.3,1) circle (.4);
  \node at (2-0.3,1) {$0$};
  \node at (2-0.3,3) {$0$};
  \node at (5,3) {$\cdots$};
  \node at (5,1) {$\cdots$};
  \draw[densely dashed] (3,3.75) to (3,4.25);
  \draw[fill=white] (3,-0.1) circle (.4) node{$0$};
  \draw[densely dashed] (7,3.75) to (7,4.25);
  \draw[fill=white] (7,-0.1) circle (.4) node{$0$};
  \node at (8.3,3) {$c$};
  \node at (8.3,1) {$0$};
  \path[fill=white] (3,3) circle (.4);
  \node at (3,3) {\scriptsize$z_n$};
  \path[fill=white] (3,1) circle (.5);
  \node at (3,1) {\scriptsize$z_n^{-1}$};
  \path[fill=white] (7,3) circle (.4);
  \node at (7,3) {\scriptsize$z_n$};
  \path[fill=white] (7,1) circle (.5);
  \node at (7,1) {\scriptsize$z_n^{-1}$};
\end{tikzpicture}
\end{align*}
where we have drawn the $^{\Gamma}_{\Delta}$ model on the left and $^{\Gamma}_{\Gamma}$ model on the right, with $u$ being an unbarred color such that $c \in \{u, \overline{u}\}$.

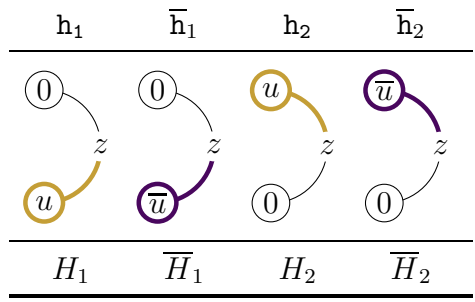
\begin{figure}
\[
\begin{array}{c@{\;\;}c@{\;\;}c@{\;\;}c}
\toprule
  \tt{h}_1 & \overline{\tt{h}}_1 & \tt{h}_2 & \overline{\tt{h}}_2 \\
\midrule
\begin{tikzpicture}
\coordinate (b) at (0, .75);
\coordinate (c) at (.75, 0);
\coordinate (d) at (0, -.75);
\coordinate (aa) at (-.75,.5);
\coordinate (cc) at (.75,.5);
\draw (b) to[out=0,in=90] (c);
\draw[line width=0.6mm, UQgold] (c) to[out=-90,in=0] (d);
\draw[fill=white] (b) circle (.25);
\path[fill=white] (c) circle (.2);
\draw[line width=0.6mm, UQgold, fill=white] (d) circle (.25);
\node at (0,1) { };
\node at (b) {$0$};
\node at (c) {$z$};
\node at (d) {$u$};
\end{tikzpicture}
& \begin{tikzpicture}
\coordinate (b) at (0, .75);
\coordinate (c) at (.75, 0);
\coordinate (d) at (0, -.75);
\coordinate (aa) at (-.75,.5);
\coordinate (cc) at (.75,.5);
\draw (b) to[out=0,in=90] (c);
\draw[line width=0.6mm, UQpurple] (c) to[out=-90,in=0] (d);
\draw[fill=white] (b) circle (.25);
\path[fill=white] (c) circle (.2);
\draw[line width=0.6mm, UQpurple, fill=white] (d) circle (.25);
\node at (0,1) { };
\node at (b) {$0$};
\node at (c) {$z$};
\node at (d) {$\overline{u}$};
\end{tikzpicture}
& \begin{tikzpicture}
\coordinate (b) at (0, .75);
\coordinate (c) at (.75, 0);
\coordinate (d) at (0, -.75);
\coordinate (aa) at (-.75,.5);
\coordinate (cc) at (.75,.5);
\draw[line width=0.6mm, UQgold] (b) to[out=0,in=90] (c);
\draw (c) to[out=-90,in=0] (d);
\draw[line width=0.6mm, UQgold, fill=white] (b) circle (.25);
\path[fill=white] (c) circle (.2);
\draw[fill=white] (d) circle (.25);
\node at (0,1) { };
\node at (b) {$u$};
\node at (c) {$z$};
\node at (d) {$0$};
\end{tikzpicture}
& \begin{tikzpicture}
\coordinate (b) at (0, .75);
\coordinate (c) at (.75, 0);
\coordinate (d) at (0, -.75);
\coordinate (aa) at (-.75,.5);
\coordinate (cc) at (.75,.5);
\draw[line width=0.6mm, UQpurple] (b) to[out=0,in=90] (c);
\draw (c) to[out=-90,in=0] (d);
\draw[line width=0.6mm, UQpurple, fill=white] (b) circle (.25);
\path[fill=white] (c) circle (.2);
\draw[fill=white] (d) circle (.25);
\node at (0,1) { };
\node at (b) {$\overline{u}$};
\node at (c) {$z$};
\node at (d) {$0$};
\end{tikzpicture}
\\ 
\midrule
H_1 & \overline{H}_1 & H_2 & \overline{H}_2 \\
\bottomrule
\end{array}
\]
\caption{The colored $K^{\Gamma}_{\Gamma}$-matrix weights with ${\color{UQgold} u}$ being any \emph{unbarred} color.}
\label{fig:colored_K_weight_GG}
\end{figure}

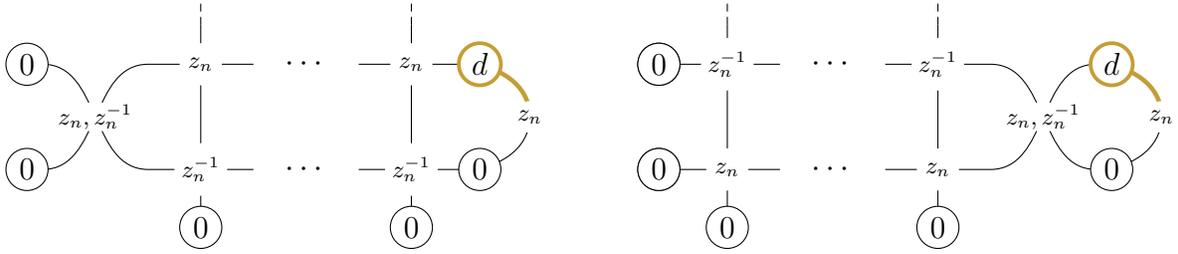
\begin{figure}
\begin{tikzpicture}[scale=0.7]
\begin{scope}[shift={(-6,0)}]
  \draw (0,1) to [out = 0, in = 180] (2,3) to (4,3);
  \draw (0,3) to [out = 0, in = 180] (2,1) to (4,1);
  \draw (3,0.25) to (3,3.75);
  \draw (7,0.25) to (7,3.75);
  \draw (6,1) to (8,1);
  \draw (6,3) to (8,3);
  \draw[line width=0.6mm, UQgold] (8,3) to[out=0,in=90] (9.25,2);
  \draw (9.25,2) to[out=-90,in=0] (8,1);
  \fill[white] (9.25,2) circle (.3);
  \node at (9.25,2) {\scriptsize $z_n$};
  \draw[fill=white] (-0.3,1) circle (.4);
  \draw[fill=white] (-0.3,3) circle (.4);
  \draw[line width=0.5mm,UQgold,fill=white] (8.3,3) circle (.4);
  \draw[fill=white] (8.3,1) circle (.4);
  \node at (-0.3,1) {$0$};
  \node at (-0.3,3) {$0$};
  \node at (5,3) {$\cdots$};
  \node at (5,1) {$\cdots$};
  \draw[densely dashed] (3,3.75) to (3,4.25);
  \draw[fill=white] (3,-0.1) circle (.4) node{$0$};
  \draw[densely dashed] (7,3.75) to (7,4.25);
  \draw[fill=white] (7,-0.1) circle (.4) node{$0$};
  \node at (8.3,3) {$d$};
  \node at (8.3,1) {$0$};
  \path[fill=white] (3,3) circle (.4);
  \node at (3,3) {\scriptsize$z_n$};
  \path[fill=white] (3,1) circle (.5);
  \node at (3,1) {\scriptsize$z_n^{-1}$};
  \path[fill=white] (7,3) circle (.4);
  \node at (7,3) {\scriptsize$z_n$};
  \path[fill=white] (7,1) circle (.5);
  \node at (7,1) {\scriptsize$z_n^{-1}$};
  \path[fill=white] (1,2) circle (.3);
  \node at (1,2) {\scriptsize$z_n,z_n^{-1}$};
\end{scope}
\begin{scope}[shift={(6,0)}]
  \draw (4,1) to (6,1) to [out = 0, in = 180] (8,3);
  \draw (4,3) to (6,3) to [out = 0, in = 180] (8,1);
  \draw[line width=0.6mm, UQgold] (8,3) to[out=0,in=90] (9.25,2);
  \draw (9.25,2) to[out=-90,in=0] (8,1);
  \fill[white] (9.25,2) circle (.3);
  \node at (9.25,2) {\scriptsize $z_n$};
  \draw[fill=white] (-0.3,1) circle (.4);
  \draw[fill=white] (-0.3,3) circle (.4);
  \draw[line width=0.5mm,UQgold,fill=white] (8.3,3) circle (.4);
  \draw[fill=white] (8.3,1) circle (.4);
  \draw (0,1) to (2,1);
  \draw (0,3) to (2,3);
  \draw (5,0.25) to (5,3.75);
  \draw (1,0.25) to (1,3.75);
  \draw[fill=white] (-0.3,1) circle (.4);
  \draw[fill=white] (-0.3,3) circle (.4);
  \node at (3,1) {$\cdots$};
  \node at (3,3) {$\cdots$};
  \draw[densely dashed] (1,3.75) to (1,4.25);
  \draw[fill=white] (1,-0.1) circle (.4) node{$0$};
  \draw[densely dashed] (5,3.75) to (5,4.25);
  \draw[fill=white] (5,-0.1) circle (.4) node{$0$};
  \path[fill=white] (1,3) circle (.5);
  \node at (1,3) {\scriptsize$z_n^{-1}$};
  \path[fill=white] (1,1) circle (.4);
  \node at (1,1) {\scriptsize$z_n$};
  \path[fill=white] (5,3) circle (.5);
  \node (a) at (5,3) {\scriptsize$z_n^{-1}$};
  \path[fill=white] (5,1) circle (.4);
  \node at (5,1) {\scriptsize$z_n$};
  \path[fill=white] (7,2) circle (.3);
  \node at (7,2) {\scriptsize$z_n,z_n^{-1}$};
  \node at (8.3,1) {$0$};
  \node at (8.3,3) {$d$};
  \node at (-0.3,1) {$0$};
  \node at (-0.3,3) {$0$};
\end{scope}
\end{tikzpicture}
\caption{Left: The model $\widetilde{\states}_{\tt{h}_2}$ with an $R$-matrix attached on the left.
    Right: The model after using the Yang--Baxter equation and the standard train argument.}
\label{fig:fishy_argument}
\end{figure}

The $K$-matrix on the right will be called the $K^{\Gamma}_{\Gamma}$-matrix and its Boltzmann weights are presented in Figure~\ref{fig:colored_K_weight_GG}.
For our purposes we only require $H_1$, $\overline{H}_1$, $H_2$, and $\overline{H}_2$ to be any non-zero complex numbers, but otherwise there will be no restrictions.

Let $\widetilde{\states}_{\tt{v}}$ denote the states of the $^{\Gamma}_{\Gamma}$ model for $w$ with a fixed entry $\tt{v}$ from the $K^{\Gamma}_{\Gamma}$-matrix.
Then, we have
\begin{align*}
Z(\overline{\states}_{\lambda, w}; \zz) 
& = K \overline{H}_1^{-1} Z(\widetilde{\states}_{\overline{\tt{h}}_1}; \zz) + H_2^{-1} Z(\widetilde{\states}_{\tt{h}_2}; \zz) + \overline{H}_2^{-1} Z(\widetilde{\states}_{\overline{\tt{h}}_2}; \zz).
\end{align*}
Next attach on the left a $R^{\Gamma}_{\Gamma}$-matrix, and then apply the standard train argument to pass the $R^{\Gamma}_{\Gamma}$-matrix to the right side as in Figure~\ref{fig:fishy_argument}.
Let $\widetilde{\states}^d_{d'}$ denote the $^{\Gamma}_{\Gamma}$ model with the $K$-matrix removed and the bottom two right boundary conditions are $d$ above and $d'$ below (either of which could be $0$).
Therefore, from the $R^{\Gamma}_{\Gamma}$-matrix and the Yang--Baxter equation, we have
\begin{align*}
z_n^{-1} \overline{H}_1^{-1} Z(\widetilde{\states}_{\overline{\tt{h}}_1}; \zz) & = z_n Z(\widetilde{\states}_{\overline{u}}^0; s_n \zz),
\\ z_n^{-1} H_2^{-1} Z(\widetilde{\states}_{\tt{h}_2}; \zz) & = (z_n - z_n^{-1}) Z(\widetilde{\states}_u^0; s_n \zz) + z_n^{-1} Z(\widetilde{\states}_0^u; s_n \zz),
\\ z_n^{-1} \overline{H}_2^{-1} Z(\widetilde{\states}_{\overline{\tt{h}}_2}; \zz) & = (z_n - z_n^{-1}) Z(\widetilde{\states}_{\overline{u}}^0; s_n \zz) + z_n^{-1} Z(\widetilde{\states}_0^{\overline{u}}; s_n \zz),
\end{align*}
where $u$ is an unbarred color.
In particular, we have the following possible \defn{fish}, local configurations of an $R^{\Gamma}_{\Gamma}$-matrix and $K^{\Gamma}_{\Gamma}$-matrix:
\[
\begin{tikzpicture}[scale=0.65]
\begin{scope}[shift={(0,0)}]
  \draw (0,1) to [out = 0, in = 180] (2,3);
  \draw (0,3) to [out = 0, in = 180] (2,1);
  \draw[-] (2,3) to[out=0,in=90] (3.25,2);
  \draw[line width=0.6mm, UQpurple] (3.25,2) to[out=-90,in=0] (2,1);
  \fill[white] (3.25,2) circle (.3);
  \node at (3.25,2) {\scriptsize $z_n$};
  \draw[fill=white] (-0.3,1) circle (.4);
  \draw[fill=white] (-0.3,3) circle (.4);
  \draw[fill=white] (2.3,3) circle (.4);
  \draw[line width=0.5mm,UQpurple,fill=white] (2.3,1) circle (.4);
  \draw[line width=0.5mm,UQpurple,fill=white] (-0.3,1) circle (.4);
  \draw[fill=white] (-0.3,3) circle (.4);
  \path[fill=white] (1,2) circle (.3);
  \node at (1,2) {\scriptsize$z_n,z_n^{-1}$};
  \node at (2.3,1) {$\overline{u}$};
  \node at (2.3,3) {$0$};
  \node at (-0.3,1) {$\overline{u}$};
  \node at (-0.3,3) {$0$};
\end{scope}
\begin{scope}[shift={(10,0)}]
  \draw (0,1) to [out = 0, in = 180] (2,3);
  \draw (0,3) to [out = 0, in = 180] (2,1);
  \draw[line width=0.6mm, UQgold] (2,3) to[out=0,in=90] (3.25,2);
  \draw[-] (3.25,2) to[out=-90,in=0] (2,1);
  \fill[white] (3.25,2) circle (.3);
  \node at (3.25,2) {\scriptsize $z_n$};
  \draw[fill=white] (-0.3,1) circle (.4);
  \draw[fill=white] (-0.3,3) circle (.4);
  \draw[line width=0.5mm,UQgold,fill=white] (2.3,3) circle (.4);
  \draw[fill=white] (2.3,1) circle (.4);
  \draw[line width=0.5mm,UQgold,fill=white] (-0.3,1) circle (.4);
  \draw[fill=white] (-0.3,3) circle (.4);
  \path[fill=white] (1,2) circle (.3);
  \node at (1,2) {\scriptsize$z_n,z_n^{-1}$};
  \node at (2.3,1) {$0$};
  \node at (2.3,3) {$u$};
  \node at (-0.3,1) {$u$};
  \node at (-0.3,3) {$0$};
\end{scope}
\begin{scope}[shift={(5,0)}]
  \draw (0,1) to [out = 0, in = 180] (2,3);
  \draw (0,3) to [out = 0, in = 180] (2,1);
  \draw[line width=0.6mm, UQgold] (2,3) to[out=0,in=90] (3.25,2);
  \draw[-] (3.25,2) to[out=-90,in=0] (2,1);
  \fill[white] (3.25,2) circle (.3);
  \node at (3.25,2) {\scriptsize $z_n$};
  \draw[fill=white] (-0.3,1) circle (.4);
  \draw[fill=white] (-0.3,3) circle (.4);
  \draw[line width=0.5mm,UQgold,fill=white] (2.3,3) circle (.4);
  \draw[fill=white] (2.3,1) circle (.4);
  \draw[fill=white] (-0.3,1) circle (.4);
  \draw[line width=0.5mm,UQgold,fill=white] (-0.3,3) circle (.4);
  \path[fill=white] (1,2) circle (.3);
  \node at (1,2) {\scriptsize$z_n,z_n^{-1}$};
  \node at (2.3,1) {$0$};
  \node at (2.3,3) {$u$};
  \node at (-0.3,1) {$0$};
  \node at (-0.3,3) {$u$};
\end{scope}
\begin{scope}[shift={(15,0)}]
  \draw (0,1) to [out = 0, in = 180] (2,3);
  \draw (0,3) to [out = 0, in = 180] (2,1);
  \draw[line width=0.6mm, UQpurple] (2,3) to[out=0,in=90] (3.25,2);
  \draw[-] (3.25,2) to[out=-90,in=0] (2,1);
  \fill[white] (3.25,2) circle (.3);
  \node at (3.25,2) {\scriptsize $z_n$};
  \draw[fill=white] (-0.3,1) circle (.4);
  \draw[fill=white] (-0.3,3) circle (.4);
  \draw[line width=0.5mm,UQpurple,fill=white] (2.3,3) circle (.4);
  \draw[fill=white] (2.3,1) circle (.4);
  \draw[line width=0.5mm,UQpurple,fill=white] (-0.3,1) circle (.4);
  \draw[fill=white] (-0.3,3) circle (.4);
  \path[fill=white] (1,2) circle (.3);
  \node at (1,2) {\scriptsize$z_n,z_n^{-1}$};
  \node at (2.3,1) {$0$};
  \node at (2.3,3) {$\overline{u}$};
  \node at (-0.3,1) {$\overline{u}$};
  \node at (-0.3,3) {$0$};
\end{scope}
\begin{scope}[shift={(20,0)}]
  \draw (0,1) to [out = 0, in = 180] (2,3);
  \draw (0,3) to [out = 0, in = 180] (2,1);
  \draw[line width=0.6mm, UQpurple] (2,3) to[out=0,in=90] (3.25,2);
  \draw[-] (3.25,2) to[out=-90,in=0] (2,1);
  \fill[white] (3.25,2) circle (.3);
  \node at (3.25,2) {\scriptsize $z_n$};
  \draw[fill=white] (-0.3,1) circle (.4);
  \draw[fill=white] (-0.3,3) circle (.4);
  \draw[line width=0.5mm,UQpurple,fill=white] (2.3,3) circle (.4);
  \draw[fill=white] (2.3,1) circle (.4);
  \draw[fill=white] (-0.3,1) circle (.4);
  \draw[line width=0.5mm,UQpurple,fill=white] (-0.3,3) circle (.4);
  \path[fill=white] (1,2) circle (.3);
  \node at (1,2) {\scriptsize$z_n,z_n^{-1}$};
  \node at (2.3,1) {$0$};
  \node at (2.3,3) {$\overline{u}$};
  \node at (-0.3,1) {$0$};
  \node at (-0.3,3) {$\overline{u}$};
\end{scope}
\end{tikzpicture}
\]
Therefore, we have
\begin{align*}
Z(\overline{\states}_{\lambda, w}; \zz) & =  K z_n^2 Z(\widetilde{\states}_{\overline{u}}^0; s_n \zz)
+ z_n\bigl( (z_n - z_n^{-1}) Z(\widetilde{\states}_u^0; s_n \zz) + z_n^{-1} Z(\widetilde{\states}_0^u; s_n \zz) \bigr)
  \\ & \hspace{20pt}  + z_n \bigl( (z_n - z_n^{-1}) Z(\widetilde{\states}_{\overline{u}}^0; s_n \zz) + z_n^{-1} Z(\widetilde{\states}_0^{\overline{u}}; s_n \zz) \bigr)
\allowdisplaybreaks
\\  & = (K z_n^2 + z_n^2 - 1) \overline{K}^{-1} Z(\overline{\states}_0^0(w); s_n \zz) + Z(\overline{\states}_{\overline{u}}^u(w); s_n \zz) + Z(\overline{\states}_{\overline{u}}^{\overline{u}}(w); s_n \zz)
  \\ & \hspace{20pt} + (z_n^2 - 1) \overline{K}^{-1} Z(\overline{\states}_0^0(s_n w); s_n \zz),
\end{align*}
where $\overline{\states}_{d'}^d(w)$ denotes the model $\overline{\states}_{\lambda, w}$ with a fixed the $K$-matrix $^{d}_{d'}$ on the bottom two rows and $\overline{K}$ denotes the parameter $K$ but with $z_n \mapsto z_n^{-1}$.

Note that $Z(\overline{\states}_0^0(s_n w); s_n \zz) = Z(\overline{\states}_{\lambda, s_n w}; s_n \zz)$.
To obtain our desired functional equations, we need to group the first three terms together into $Z(\overline{\states}_{\lambda, w}; s_n \zz)$.
Hence, we require
\begin{equation}
\label{eq:K_relation}
(K z_n^2 + z_n^2 - 1) \overline{K}^{-1} = 1.
\end{equation}
The $K$-matrix for type $C$, where $K = z_n^{-2}$, satisfies Equation~\eqref{eq:K_relation}.
Therefore, we have
\begin{align*}
Z(\overline{\states}^C_{\lambda, w}; \zz) & = Z(\overline{\states}^C_{\lambda, w}; s_n \zz) + (1- z_n^{-2}) Z(\overline{\states}^C_{\lambda, s_n w}; s_n \zz),
\\ (z_n^2 - 1) Z(\overline{\states}^C_{\lambda, s_n w}; \zz) & = Z(\overline{\states}^C_{\lambda, w}; \zz) - Z(\overline{\states}^C_{\lambda, w}; s_n \zz).
\end{align*}
We can also take $K = z_n^{-2} \cdot (z_n + 1) = z_n^{-1} + z_n^{-2}$ using the type $B$ $K$-matrix, which also is a solution to Equation~\eqref{eq:K_relation}.
In this case, we obtain
\begin{align*}
(z_n - 1) Z(\overline{\states}^B_{\lambda, s_n w}; \zz) & = Z(\overline{\states}^B_{\lambda, w}; \zz) - Z(\overline{\states}^B_{\lambda, w}; s_n \zz).
\end{align*}
This is the functional equation for type $B$ as desired.
\end{proof}

\begin{remark}
We can construct another quasi-solvable lattice model $\overline{\states}^R_{\lambda, w}$ by using the solution $K = -1$ to Equation~\eqref{eq:K_relation}.
In this case, our functional equation becomes
\begin{align*}
(z_n^2 - 1) Z(\overline{\states}^R_{\lambda, s_n w}; s_n \zz) & = Z(\overline{\states}^R_{\lambda, w}; s_n \zz) - Z(\overline{\states}^R_{\lambda, w}; \zz).
\end{align*}
which is a flipped version of the type $C$ functional equation.
\end{remark}


\subsection{The first main theorem}

Using the functional equations we have shown, we can now prove our first main result.

\begin{theorem}
\label{thm:partition_atom_BC}
For Cartan type $X \in \{B, C\}$, we have
\[
Z(\overline{\states}^X_{\lambda, w}; \zz) = \zz^{\rho} A_w(\zz, \lambda).
\]
\end{theorem}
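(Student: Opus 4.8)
The plan is to prove the identity by induction on $\ell(w)$, using the three functional equations established in Lemma~\ref{lemma:type_A_relation} and Lemma~\ref{lemma:type_BC_relation} together with the parallel recursions~\eqref{eq:atom_functional_A}, \eqref{eq:atom_functional_B}, \eqref{eq:atom_functional_C} satisfied by the Demazure atoms. For the base case $w = 1$, note that $A_1(\zz, \lambda) = \zz^\lambda$, so the claim reads $Z(\overline{\states}^X_{\lambda, 1}; \zz) = \zz^{\lambda + \rho}$. Here one checks directly that there is a unique admissible state --- the one depicted (for $\lambda = (3,1)$) in Figure~\ref{fig:ground_state} --- in which each color travels from its top entry at column $\lambda_i + \rho_i$ straight down its prescribed row, and the product of the Boltzmann weights in Figures~\ref{fig:colored_gamma_weights}, \ref{fig:colored_delta_weights}, and~\ref{fig:colored_K_weights_BC} evaluates to the monomial $\zz^{\lambda + \rho}$.

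For the inductive step, I would first observe that $\zz^\rho A_w(\zz, \lambda)$ satisfies \emph{exactly} the same functional equations as $Z(\overline{\states}^X_{\lambda, w}; \zz)$. The key point is how the twist $\zz^\rho$ interacts with the Weyl group action: since $\rho = (n-1, n-2, \dotsc, 0)$, for $i < n$ we have $(s_i \zz)^\rho = (z_{i+1}/z_i)\, \zz^\rho$, whereas $(s_n \zz)^\rho = \zz^\rho$ because the last coordinate of $\rho$ vanishes. Writing $\widetilde{Z}_w(\zz) := \zz^\rho A_w(\zz, \lambda)$ and substituting the atom recursion~\eqref{eq:atom_functional_A} into the type~A model equation~\eqref{eq:typeAfe} (with $j = i+1$), the factor $z_i z_j^{-1}$ on the right-hand side of the model equation exactly cancels the ratio $(s_i\zz)^\rho/\zz^\rho = z_j/z_i$, so that
\[
(z_i - z_j)\,\widetilde{Z}_{s_i w}(\zz) = z_j\bigl(\widetilde{Z}_w(\zz) - z_i z_j^{-1}\,\widetilde{Z}_w(s_i\zz)\bigr);
\]
that is, $\widetilde{Z}_w$ obeys~\eqref{eq:typeAfe}. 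For $s_n$ the computation is even simpler: since $(s_n\zz)^\rho = \zz^\rho$, the atom recursions~\eqref{eq:atom_functional_B} and~\eqref{eq:atom_functional_C} immediately give that $\widetilde{Z}_w$ satisfies the type~B and type~C equations of Lemma~\ref{lemma:type_BC_relation}.

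With this in hand, the induction proceeds as follows. Fix a reduced word $w = s_{i_1}\cdots s_{i_\ell}$ and set $w_k = s_{i_k}\cdots s_{i_\ell}$, so that $w_{\ell+1} = 1$, $w_1 = w$, and $\ell(s_{i_k} w_{k+1}) = \ell(w_{k+1}) + 1$ at each step; thus every element of $W$ is reached from $1$ by a chain of length-increasing left multiplications by simple reflections. Assuming $Z(\overline{\states}^X_{\lambda, w}; \zz) = \widetilde{Z}_w(\zz)$, the relevant functional equation expresses $(z_i - z_j)$ (resp.\ $(z_n - 1)$ in type $B$, $(z_n^2 - 1)$ in type $C$) times both $Z(\overline{\states}^X_{\lambda, s_i w}; \zz)$ and $\widetilde{Z}_{s_i w}(\zz)$ in terms of the same already-known quantity. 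Hence their difference is annihilated by $z_i - z_j$ (resp.\ $z_n - 1$ or $z_n^2 - 1$), which is a nonzero element of the integral domain $\ZZ[\zz^{\pm 1}]$, forcing $Z(\overline{\states}^X_{\lambda, s_i w}; \zz) = \widetilde{Z}_{s_i w}(\zz) = \zz^\rho A_{s_i w}(\zz, \lambda)$. Since both the partition function and the atom are defined intrinsically (the latter well-defined by Matsumoto's theorem), the conclusion is independent of the chosen reduced word, completing the induction.

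The main obstacle is really absorbed into the two inputs already established: the verification that the engineered constants in the model functional equations (the factor $z_i z_j^{-1}$ in~\eqref{eq:typeAfe} and the $K$-matrix weights forced by~\eqref{eq:K_relation}) reproduce precisely the atom recursions after the $\zz^\rho$ twist. Beyond that, the only genuinely model-specific computation left for this theorem is the base case: confirming that for $w = 1$ the admissible state is unique and carries weight $\zz^{\lambda+\rho}$. I expect this normalization check --- tracking the colored paths through the $\Gamma$, $\Delta$, and U-turn vertices and collecting the resulting monomial --- to be the most delicate bookkeeping, even though it is conceptually straightforward.
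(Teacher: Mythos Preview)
Your proposal is correct and follows essentially the same approach as the paper: induction on $\ell(w)$ with the base case $w=1$ handled by exhibiting the unique admissible state of weight $\zz^{\lambda+\rho}$, and the inductive step driven by matching the model functional equations of Lemmas~\ref{lemma:type_A_relation} and~\ref{lemma:type_BC_relation} against the atom recursions~\eqref{eq:atom_functional} after accounting for how $\zz^\rho$ transforms under $s_i$. Your framing (showing $\widetilde{Z}_w := \zz^\rho A_w$ satisfies the same recursion and then cancelling the nonzero factor in the integral domain) is a mild repackaging of the paper's direct substitution, and your explicit remark that $z_i - z_j$, $z_n - 1$, $z_n^2 - 1$ are nonzerodivisors is a point the paper leaves implicit.
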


\begin{proof}
The case $w = 1$ is clear as $\zz^{\rho} A_w(\zz, \lambda) = \z^{\lambda+\rho}$ by definition and there is a unique admissible state in $\overline{\states}^X_{\lambda,1}$ whose weight can be easily seen to be $\z^{\lambda+\rho}$ both in types $B$ and $C$.
The remainder of the proof proceeds by induction on the length of $w$ and uses the functional equations proved in the previous sections.

Let $i < n$ with $j = i + 1$ and let $w$ be such that $\ell(s_iw) = \ell(w)+1$. From our induction assumption, we have that $Z(\overline{\states}_{\lambda, w}; \zz) = \zz^{\rho} A_w(\zz, \lambda)$ and that
\[
Z(\overline{\states}_{\lambda, w}; s_i \zz) = z_i^{-1} z_j \cdot \zz^{\rho} s_i A_w(\zz, \lambda) = z_i^{-1} z_j \cdot \zz^{\rho} A_w(s_i \zz, \lambda).
\]
Using the functional equation in Lemma~\ref{lemma:type_A_relation}, we obtain 
\[
(z_i - z_j) Z(\overline{\states}_{\lambda, s_i w}; \zz) =  z_j \cdot \zz^\rho \bigl( A_w(\zz, \lambda) -  A_w(s_i \zz, \lambda) \bigr),
\]
which combined with Equation~\eqref{eq:atom_functional_A} produces $Z(\overline{\states}_{\lambda, s_iw}; \zz) = \zz^{\rho} A_{s_iw}(\zz, \lambda)$ as desired.

For $i = n$, we note that $s_n \zz^{\rho} = \zz^{\rho}$.
It is then easy to see that by using the induction assumption together with Lemma~\ref{lemma:type_BC_relation} and Equation~\eqref{eq:atom_functional_B} in type $B$ or Equation~\eqref{eq:atom_functional_C} in type $C$, we obtain that $Z(\overline{\states}_{\lambda, s_n w}; \zz) = \zz^{\rho} A_{s_n w}(\zz, \lambda)$.
\end{proof}

\begin{example}
\label{ex:atom_21}
Let $n = 2$ and $\lambda = (2,1)$.
Then we have the following states in $\overline{\states}_w$:
\ifexamples
\newcommand{\cs}{.35} 
\newcommand{\lw}{0.6mm} 
\newcommand{\thescale}{0.35} 
\newcommand{\base}{
  \foreach \y in {1,5} {
    \draw[-] (8,\y) to [out=0, in=0] (8,\y+2);
  }
  \foreach \x in {1,3,5,7}{
    \draw[-] (\x,0) -- (\x,8);
    \foreach \y in {0,2,4,6,8}
      \draw[fill=white] (\x,\y) circle (\cs);
  }
  \foreach \y in {1,3,5,7} {
    \draw[-] (0,\y) -- (8,\y);
    \foreach \x in {0,2,4,6,8}
      \draw[fill=white] (\x,\y) circle (\cs);
  }
}
\begin{align*}
1: & \quad
\begin{tikzpicture}[scale=\thescale,baseline=40]
  \base
  \draw[-, UQgold, line width=\lw] (8,7) to[out=0, in=0] (8,5);
  \draw[-, UQgold, line width=\lw] (8,3) to[out=0, in=0] (8,1);
  \draw[-,blue,line width=\lw] (5,8) -- (5,3) -- (8,3);
  \draw[-,darkblue, line width=\lw] (8,1) -- (0,1);
  \draw[-,red,line width=\lw] (1,8) -- (1,7) -- (8,7);
  \draw[-,darkred, line width=\lw] (8,5) -- (0,5);
  \draw[fill=red] (1,8) circle (\cs);
  \foreach \x in {2,4,6,8}
    \draw[fill=red] (\x,7) circle (\cs);
  \foreach \x in {0,2,4,6,8}
  \draw[fill=darkred] (\x,5) circle (\cs);
  \foreach \y in {8,6,4}
    \draw[fill=blue] (5,\y) circle (\cs);
  \foreach \x in {6,8}
    \draw[fill=blue] (\x,3) circle (\cs);
  \foreach \x in {0,2,4,6,8}
    \draw[fill=darkblue] (\x,1) circle (\cs);
\end{tikzpicture}
\qquad\quad
s_1: \quad
\begin{tikzpicture}[scale=\thescale,baseline=40]
  \base
  \draw[-, UQgold, line width=\lw] (8,7) to[out=0, in=0] (8,5);
  \draw[-, UQgold, line width=\lw] (8,3) to[out=0, in=0] (8,1);
  \draw[-,blue,line width=\lw] (5,8) -- (5,7) -- (8,7);
  \draw[-,darkblue, line width=\lw] (8,5) -- (0,5);
  \draw[-,red,line width=\lw] (1,8) -- (1,7) -- (3,7) -- (3,3) -- (8,3);
  \draw[-,darkred, line width=\lw] (8,1) -- (0,1);
  \draw[fill=red] (1,8) circle (\cs);
  \draw[fill=red] (2,7) circle (\cs);
  \foreach \y in {4,6}
    \draw[fill=red] (3,\y) circle (\cs);
  \foreach \x in {4,6,8}
    \draw[fill=red] (\x,3) circle (\cs);
  \foreach \x in {0,2,4,6,8}
    \draw[fill=darkred] (\x,1) circle (\cs);
  \draw[fill=blue] (5,8) circle (\cs);
  \foreach \x in {6,8}
    \draw[fill=blue] (\x,7) circle (\cs);
  \foreach \x in {0,2,4,6,8}
    \draw[fill=darkblue] (\x,5) circle (\cs);
\end{tikzpicture}
\qquad\quad
s_2: \quad
\begin{tikzpicture}[scale=\thescale,baseline=40]
  \base
  \draw[-, UQgold, line width=\lw] (8,7) to[out=0, in=0] (8,5);
  \draw[-,blue,line width=\lw] (5,8) -- (5,3) -- (7,3) -- (7,1) -- (0,1);
  \draw[-,red,line width=\lw] (1,8) -- (1,7) -- (8,7);
  \draw[-,darkred, line width=\lw] (8,5) -- (0,5);
  \draw[fill=red] (1,8) circle (\cs);
  \foreach \x in {2,4,6,8}
    \draw[fill=red] (\x,7) circle (\cs);
  \foreach \x in {0,2,4,6,8}
  \draw[fill=darkred] (\x,5) circle (\cs);
  \foreach \y in {8,6,4}
    \draw[fill=blue] (5,\y) circle (\cs);
  \draw[fill=blue] (6,3) circle (\cs);
  \draw[fill=blue] (7,2) circle (\cs);
  \foreach \x in {0,2,4,6}
    \draw[fill=blue] (\x,1) circle (\cs);
\end{tikzpicture}
\allowdisplaybreaks
\\
s_1 s_2: & \quad
\begin{tikzpicture}[scale=\thescale,baseline=40]
  \base
  \draw[-, UQgold, line width=\lw] (8,7) to[out=0, in=0] (8,5);
  \draw[-, UQpurple, line width=\lw] (8,1) to[out=0, in=0] (8,3);
  \draw[-,blue,line width=\lw] (5,8) -- (5,5) -- (0,5);
  \draw[-,red,line width=\lw] (1,8) -- (1,7) -- (8,7);
  \draw[-,darkred, line width=\lw] (8,5) -- (7,5) -- (7,3) -- (8,3);
  \draw[-,darkred, line width=\lw] (8,1) -- (0,1);
  \draw[fill=red] (1,8) circle (\cs);
  \foreach \x in {2,4,6,8}
    \draw[fill=red] (\x,7) circle (\cs);
  \draw[fill=darkred] (8,5) circle (\cs);
  \draw[fill=darkred] (7,4) circle (\cs);
  \draw[fill=darkred] (8,3) circle (\cs);
  \foreach \x in {0,2,4,6,8}
    \draw[fill=darkred] (\x,1) circle (\cs);
  \draw[fill=blue] (5,8) circle (\cs);
  \draw[fill=blue] (5,6) circle (\cs);
  \foreach \x in {0,2,4}
    \draw[fill=blue] (\x,5) circle (\cs);
\end{tikzpicture}
\qquad
\begin{tikzpicture}[scale=\thescale,baseline=40]
  \base
  \draw[-, UQgold, line width=\lw] (8,3) to[out=0, in=0] (8,1);
  \draw[-,blue,line width=\lw] (5,8) -- (5,7) -- (7,7) -- (7,5) -- (0,5);
  \draw[-,red,line width=\lw] (1,8) -- (1,7) -- (3,7) -- (3,3) -- (8,3);
  \draw[-,darkred, line width=\lw] (8,1) -- (0,1);
  \draw[fill=red] (1,8) circle (\cs);
  \draw[fill=red] (2,7) circle (\cs);
  \foreach \y in {4,6}
    \draw[fill=red] (3,\y) circle (\cs);
  \foreach \x in {4,6,8}
    \draw[fill=red] (\x,3) circle (\cs);
  \foreach \x in {0,2,4,6,8}
    \draw[fill=darkred] (\x,1) circle (\cs);
  \draw[fill=blue] (5,8) circle (\cs);
  \draw[fill=blue] (6,7) circle (\cs);
  \draw[fill=blue] (7,6) circle (\cs);
  \foreach \x in {0,2,4,6}
  \draw[fill=blue] (\x,5) circle (\cs);
\end{tikzpicture}
\qquad
\begin{tikzpicture}[scale=\thescale,baseline=40]
  \base
  \draw[-, UQgold, line width=\lw] (8,3) to[out=0, in=0] (8,1);
  \draw[-,blue,line width=\lw] (5,8) -- (5,5) -- (0,5);
  \draw[-,red,line width=\lw] (1,8) -- (1,7) -- (7,7) -- (7,5) -- (5,5) -- (5,3) -- (8,3);
  \draw[-,darkred, line width=\lw] (8,1) -- (0,1);
  \draw[fill=red] (1,8) circle (\cs);
  \foreach \x in {2,4,6}
    \draw[fill=red] (\x,7) circle (\cs);
  \draw[fill=red] (7,6) circle (\cs);
  \draw[fill=red] (6,5) circle (\cs);
  \draw[fill=red] (5,4) circle (\cs);
  \foreach \x in {6,8}
    \draw[fill=red] (\x,3) circle (\cs);
  \foreach \x in {0,2,4,6,8}
    \draw[fill=darkred] (\x,1) circle (\cs);
  \draw[fill=blue] (5,8) circle (\cs);
  \draw[fill=blue] (5,6) circle (\cs);
  \foreach \x in {0,2,4}
    \draw[fill=blue] (\x,5) circle (\cs);
\end{tikzpicture}
\allowdisplaybreaks
\\
s_2 s_1: & \quad
\begin{tikzpicture}[scale=\thescale,baseline=40]
  \base
  \draw[-, UQgold, line width=\lw] (8,7) to[out=0, in=0] (8,5);
  \draw[-,blue,line width=\lw] (5,8) -- (5,7) -- (8,7);
  \draw[-,darkblue, line width=\lw] (8,5) -- (0,5);
  \draw[-,red,line width=\lw] (1,8) -- (1,7) -- (3,7) -- (3,3) -- (7,3) -- (7,1) -- (0,1);
  \draw[fill=red] (1,8) circle (\cs);
  \draw[fill=red] (2,7) circle (\cs);
  \draw[fill=red] (7,2) circle (\cs);
  \foreach \y in {4,6}
    \draw[fill=red] (3,\y) circle (\cs);
  \foreach \x in {4,6}
    \draw[fill=red] (\x,3) circle (\cs);
  \foreach \x in {0,2,4,6}
    \draw[fill=red] (\x,1) circle (\cs);
  \draw[fill=blue] (5,8) circle (\cs);
  \foreach \x in {6,8}
    \draw[fill=blue] (\x,7) circle (\cs);
  \foreach \x in {0,2,4,6,8}
    \draw[fill=darkblue] (\x,5) circle (\cs);
\end{tikzpicture}
\qquad
\begin{tikzpicture}[scale=\thescale,baseline=40]
  \base
  \draw[-, UQgold, line width=\lw] (8,7) to[out=0, in=0] (8,5);
  \draw[-,blue,line width=\lw] (5,8) -- (5,7) -- (8,7);
  \draw[-,darkblue, line width=\lw] (8,5) -- (0,5);
  \draw[-,red,line width=\lw] (1,8) -- (1,7) -- (3,7) -- (3,3) -- (5,3) -- (5,1) -- (0,1);
  \draw[fill=red] (1,8) circle (\cs);
  \draw[fill=red] (2,7) circle (\cs);
  \draw[fill=red] (5,2) circle (\cs);
  \foreach \y in {4,6}
    \draw[fill=red] (3,\y) circle (\cs);
  \draw[fill=red] (4,3) circle (\cs);
  \foreach \x in {0,2,4}
    \draw[fill=red] (\x,1) circle (\cs);
  \draw[fill=blue] (5,8) circle (\cs);
  \foreach \x in {6,8}
    \draw[fill=blue] (\x,7) circle (\cs);
  \foreach \x in {0,2,4,6,8}
    \draw[fill=darkblue] (\x,5) circle (\cs);
\end{tikzpicture}
\allowdisplaybreaks
\\
s_1 s_2 s_1: & \quad
\begin{tikzpicture}[scale=\thescale,baseline=40]
  \base
  \draw[-, UQgold, line width=\lw] (8,7) to[out=0, in=0] (8,5);
  \draw[-, UQpurple, line width=\lw] (8,1) to[out=0, in=0] (8,3);
  \draw[-,blue,line width=\lw] (5,8) -- (5,7) -- (8,7);
  \draw[-,darkblue, line width=\lw] (8,5) -- (7,5) -- (7,3) -- (8,3);
  \draw[-,darkblue, line width=\lw] (8,1) -- (0,1);
  \draw[-,red,line width=\lw] (1,8) -- (1,7) -- (3,7) -- (3,5) -- (0,5);
  \draw[fill=red] (1,8) circle (\cs);
  \draw[fill=red] (2,7) circle (\cs);
  \draw[fill=red] (3,6) circle (\cs);
  \foreach \x in {0,2}
    \draw[fill=red] (\x,5) circle (\cs);
  \draw[fill=blue] (5,8) circle (\cs);
  \foreach \x in {6,8}
    \draw[fill=blue] (\x,7) circle (\cs);
  \draw[fill=darkblue] (8,5) circle (\cs);
  \draw[fill=darkblue] (7,4) circle (\cs);
  \draw[fill=darkblue] (8,3) circle (\cs);
  \foreach \x in {0,2,4,6,8}
    \draw[fill=darkblue] (\x,1) circle (\cs);
\end{tikzpicture}
\quad\,
\begin{tikzpicture}[scale=\thescale,baseline=40]
  \base
  \draw[-, UQgold, line width=\lw] (8,7) to[out=0, in=0] (8,5);
  \draw[-, UQpurple, line width=\lw] (8,1) to[out=0, in=0] (8,3);
  \draw[-,blue,line width=\lw] (5,8) -- (5,7) -- (8,7);
  \draw[-,darkblue, line width=\lw] (8,5) -- (5,5) -- (5,3) -- (8,3);
  \draw[-,darkblue, line width=\lw] (8,1) -- (0,1);
  \draw[-,red,line width=\lw] (1,8) -- (1,7) -- (3,7) -- (3,5) -- (0,5);
  \draw[fill=red] (1,8) circle (\cs);
  \draw[fill=red] (2,7) circle (\cs);
  \draw[fill=red] (3,6) circle (\cs);
  \foreach \x in {0,2}
    \draw[fill=red] (\x,5) circle (\cs);
  \draw[fill=blue] (5,8) circle (\cs);
  \foreach \x in {6,8} {
    \draw[fill=blue] (\x,7) circle (\cs);
    \draw[fill=darkblue] (\x,5) circle (\cs);
    \draw[fill=darkblue] (\x,3) circle (\cs);
  }
  \draw[fill=darkblue] (5,4) circle (\cs);
  \foreach \x in {0,2,4,6,8}
    \draw[fill=darkblue] (\x,1) circle (\cs);
\end{tikzpicture}
\quad\,
\begin{tikzpicture}[scale=\thescale,baseline=40]
  \base
  \draw[-, UQgold, line width=\lw] (8,7) to[out=0, in=0] (8,5);
  \draw[-,blue,line width=\lw] (5,8) -- (5,7) -- (8,7);
  \draw[-,darkblue, line width=\lw] (8,5) -- (5,5) -- (5,3) -- (7,3) -- (7,1) -- (0,1);
  \draw[-,red,line width=\lw] (1,8) -- (1,7) -- (3,7) -- (3,5) -- (0,5);
  \draw[fill=red] (1,8) circle (\cs);
  \draw[fill=red] (2,7) circle (\cs);
  \draw[fill=red] (3,6) circle (\cs);
  \foreach \x in {0,2}
    \draw[fill=red] (\x,5) circle (\cs);
  \draw[fill=blue] (5,8) circle (\cs);
  \foreach \x in {6,8} {
    \draw[fill=blue] (\x,7) circle (\cs);
    \draw[fill=darkblue] (\x,5) circle (\cs);
  }
  \draw[fill=darkblue] (5,4) circle (\cs);
  \draw[fill=darkblue] (6,3) circle (\cs);
  \draw[fill=darkblue] (7,2) circle (\cs);
  \foreach \x in {0,2,4,6}
    \draw[fill=darkblue] (\x,1) circle (\cs);
\end{tikzpicture}
\quad\,
\begin{tikzpicture}[scale=\thescale,baseline=40]
  \base
  \draw[-, UQgold, line width=\lw] (8,3) to[out=0, in=0] (8,1);
  \draw[-,blue,line width=\lw] (5,8) -- (5,7) -- (7,7) -- (7,5) -- (5,5) -- (5,3) -- (8,3);
  \draw[-,darkblue, line width=\lw] (8,1) -- (0,1);
  \draw[-,red,line width=\lw] (1,8) -- (1,7) -- (3,7) -- (3,5) -- (0,5);
  \draw[fill=red] (1,8) circle (\cs);
  \draw[fill=red] (2,7) circle (\cs);
  \draw[fill=red] (3,6) circle (\cs);
  \foreach \x in {0,2}
    \draw[fill=red] (\x,5) circle (\cs);
  \draw[fill=blue] (5,8) circle (\cs);
  \draw[fill=blue] (6,7) circle (\cs);
  \draw[fill=blue] (7,6) circle (\cs);
  \draw[fill=blue] (6,5) circle (\cs);
  \draw[fill=blue] (5,4) circle (\cs);
  \foreach \x in {6,8}
  \draw[fill=blue] (\x,3) circle (\cs);
  \foreach \x in {0,2,4,6,8}
    \draw[fill=darkblue] (\x,1) circle (\cs);
\end{tikzpicture}
\allowdisplaybreaks
\\
s_2 s_1 s_2: & \quad
\begin{tikzpicture}[scale=\thescale,baseline=40]
  \base
  \draw[-,blue,line width=\lw] (5,8) -- (5,7) -- (7,7) -- (7,5) -- (0,5);
  \draw[-,red,line width=\lw] (1,8) -- (1,7) -- (3,7) -- (3,3) -- (7,3) -- (7,1) -- (0,1);
  \draw[fill=red] (1,8) circle (\cs);
  \draw[fill=red] (2,7) circle (\cs);
  \draw[fill=red] (7,2) circle (\cs);
  \foreach \y in {4,6}
    \draw[fill=red] (3,\y) circle (\cs);
  \foreach \x in {4,6}
    \draw[fill=red] (\x,3) circle (\cs);
  \foreach \x in {0,2,4,6}
    \draw[fill=red] (\x,1) circle (\cs);
  \draw[fill=blue] (5,8) circle (\cs);
  \draw[fill=blue] (6,7) circle (\cs);
  \draw[fill=blue] (7,6) circle (\cs);
  \foreach \x in {0,2,4,6}
    \draw[fill=blue] (\x,5) circle (\cs);
\end{tikzpicture}
\qquad
\begin{tikzpicture}[scale=\thescale,baseline=40]
  \base
  \draw[-,blue,line width=\lw] (5,8) -- (5,7) -- (7,7) -- (7,5) -- (0,5);
  \draw[-,red,line width=\lw] (1,8) -- (1,7) -- (3,7) -- (3,3) -- (5,3) -- (5,1) -- (0,1);
  \draw[fill=red] (1,8) circle (\cs);
  \draw[fill=red] (2,7) circle (\cs);
  \draw[fill=red] (5,2) circle (\cs);
  \foreach \y in {4,6}
    \draw[fill=red] (3,\y) circle (\cs);
  \draw[fill=red] (4,3) circle (\cs);
  \foreach \x in {0,2,4}
    \draw[fill=red] (\x,1) circle (\cs);
  \draw[fill=blue] (5,8) circle (\cs);
  \draw[fill=blue] (6,7) circle (\cs);
  \draw[fill=blue] (7,6) circle (\cs);
  \foreach \x in {0,2,4,6}
    \draw[fill=blue] (\x,5) circle (\cs);
\end{tikzpicture}
\qquad
\begin{tikzpicture}[scale=\thescale,baseline=40]
  \base
  \draw[-,blue,line width=\lw] (5,8) -- (5,5) -- (0,5);
  \draw[-,red,line width=\lw] (1,8) -- (1,7) -- (7,7) -- (7,5) -- (5,5) -- (5,3) -- (7,3) -- (7,1) -- (0,1);
  \draw[fill=red] (1,8) circle (\cs);
  \foreach \x in {2,4,6}
    \draw[fill=red] (\x,7) circle (\cs);
  \draw[fill=red] (7,6) circle (\cs);
  \draw[fill=red] (6,5) circle (\cs);
  \draw[fill=red] (5,4) circle (\cs);
  \draw[fill=red] (6,3) circle (\cs);
  \draw[fill=red] (7,2) circle (\cs);
  \foreach \x in {0,2,4,6}
    \draw[fill=red] (\x,1) circle (\cs);
  \foreach \y in {8,6}
    \draw[fill=blue] (5,\y) circle (\cs);
  \foreach \x in {0,2,4}
    \draw[fill=blue] (\x,5) circle (\cs);
\end{tikzpicture}
\allowdisplaybreaks
\\
w_0: &  \quad
\begin{tikzpicture}[scale=\thescale,baseline=40]
  \base
  \draw[-,blue,line width=\lw] (5,8) -- (5,7) -- (7,7) -- (7,5) -- (5,5) -- (5,3) -- (7,3) -- (7,1) -- (0,1);
  \draw[-,red,line width=\lw] (1,8) -- (1,7) -- (3,7) -- (3,5) -- (0,5);
  \draw[fill=red] (1,8) circle (\cs);
  \draw[fill=red] (2,7) circle (\cs);
  \draw[fill=red] (3,6) circle (\cs);
  \foreach \x in {0,2}
    \draw[fill=red] (\x,5) circle (\cs);
  \draw[fill=blue] (5,8) circle (\cs);
  \draw[fill=blue] (6,7) circle (\cs);
  \draw[fill=blue] (7,6) circle (\cs);
  \draw[fill=blue] (6,5) circle (\cs);
  \draw[fill=blue] (5,4) circle (\cs);
  \draw[fill=blue] (6,3) circle (\cs);
  \draw[fill=blue] (7,2) circle (\cs);
  \foreach \x in {0,2,4,6}
    \draw[fill=blue] (\x,1) circle (\cs);
\end{tikzpicture}
\end{align*}
\fi
Next, we compute the partition functions for each model in type $C$ for $\overline{Z}_w := Z(\overline{\states}^C_{\lambda, w}; \zz)$:
\begin{gather*}
\overline{Z}_1 = z_1^3 z_2,
\qquad\qquad
\overline{Z}_{s_1} = z_1^2 z_2^2,
\qquad\qquad
\overline{Z}_{s_2} = z_1^3 z_2^{-1},
\\
\overline{Z}_{s_1 s_2} = z_1^2 + z_2^2 + z_1 z_2,
\qquad\qquad
\overline{Z}_{s_2 s_1} = z_1^2 + z_1^2 z_2^{-2},
\\
\overline{Z}_{s_1 s_2 s_1} = 1 + z_1 z_2 + z_1 z_2^{-1} + z_1^{-1} z_2,
\qquad\qquad
\overline{Z}_{s_2 s_1 s_2} = 1 + z_2^{-2} + z_1 z_2^{-1},
\\
\overline{Z}_{w_0} = z_1^{-1} z_2^{-1}.
\end{gather*}
We can see that these differ by $\zz^{\rho} = z_1$ from the atoms $A_w := A_w(\zz, \lambda)$:
\begin{gather*}
A_1 = z_1^2 z_2,
\qquad\qquad
A_{s_1} = z_1 z_2^2,
\qquad\qquad
A_{s_2} = z_1^2 z_2^{-1},
\\
A_{s_1 s_2} = z_1 + z_1^{-1} z_2^2 + z_2,
\qquad\qquad
A_{s_2 s_1} = z_1 + z_1 z_2^{-2},
\\
A_{s_1 s_2 s_1} = z_1^{-1} + z_2 + z_2^{-1} + z_1^{-2} z_2,
\qquad\qquad
A_{s_2 s_1 s_2} = z_1^{-1} + z_1^{-1} z_2^{-2} + z_2^{-1},
\\
A_{w_0} = z_1^{-2} z_2^{-1}.
\end{gather*}
\end{example}

\subsection{Lattice models and quantum supergroups}
\label{sec:quantum_groups}

Quantum (super)groups and solvable lattice models are naturally related by identifying the $L$-matrix or $R$-matrix of Boltzmann weights with $R$-matrices coming from representations of affine quantum (super)groups.
In Proposition~\ref{prop:quantummatchingDDGG}, we give a partial quantum supergroup interpretation of the solutions to the Yang--Baxter equation for $R^\Gamma_\Gamma$ and $R^\Delta_\Delta$ in terms of a $q \to 0$ limit of certain $R$-matrices related to representations of the quantum supergroup $\uqsg$.
However, after Proposition~\ref{prop:quantummatchingDDGG}, we discuss why a complete quantum group interpretation is impossible in our setting.  
This is not surprising due to the lack of uniqueness of the $R$-matrix in Figure~\ref{fig:colored_R_matrix_GD}.
Matching lattice models $R$-matrices to quantum group $R$-matrices has possible implication for future work relating lattice models with Iwahori Whittaker functions or Hall--Littlewood polynomials for the symplectic or odd orthogonal groups. For example, in Cartan type A, a quantum group interpretation of the lattice model R-matrix can be used to relate quantum $R$-matrices with $p$-adic intertwining integrals~\cite{BBBGIwahori}.
The $q=0$ results in this section (Proposition~\ref{prop:quantummatchingDDGG}) are a first step towards developing a similar theory in Cartan type B and C.  

Let $R_q := R_q(z_j/z_i)$ be the $\uqsg$ $R$-matrix defined in~\cite[Def.~2.1]{Kojima13} acting on a tensor product of evaluation representations $V_{z_j} \otimes V_{z_i}$, where $V_z$ is a $2n+1$ dimensional super vector space.
We identify the basis of $V_z$ with the spins in our lattice model by the $2n$ colors corresponding to the even subspace and the $0$ for the odd subspace.
We may write Boltzmann weights corresponding to the $R_q(z)$ matrix as in Figure~\ref{fig:R_q_weights}.
Denote by $R_q^{**}$ the $\uqsg$ $R$-matrix acting on the dual of the evaluation representations $V_{q^2 z_j}^* \otimes V_{q^2 z_i}^*$.

\begin{figure}
\[
\begin{array}{c@{\hspace{30pt}}c@{\hspace{30pt}}c@{\hspace{30pt}}c@{\hspace{30pt}}c}
\toprule
  \tt{a}_1&\tt{b}_1&\tt{b}_2&\tt{c}_1&\tt{c}_2\\
\midrule
\begin{tikzpicture}[scale=0.7]
\draw (0,0) to [out = 0, in = 180] (2,2);
\draw (0,2) to [out = 0, in = 180] (2,0);
\draw[fill=white] (0,0) circle (.35);
\draw[fill=white] (0,2) circle (.35);
\draw[fill=white] (2,0) circle (.35);
\draw[fill=white] (2,2) circle (.35);
\node at (0,0) {$0$};
\node at (0,2) {$0$};
\node at (2,2) {$0$};
\node at (2,0) {$0$};
\path[fill=white] (1,1) circle (.3);
\node at (1,1) {$z_i,z_j$};
\end{tikzpicture}&
\begin{tikzpicture}[scale=0.7]
\draw (0,0) to [out = 0, in = 180] (2,2);
\draw (0,2) to [out = 0, in = 180] (2,0);
\draw[fill=white] (0,0) circle (.35);
\draw[line width=0.5mm, brown, fill=white] (0,2) circle (.35);
\draw[line width=0.5mm, brown, fill=white] (2,0) circle (.35);
\draw[fill=white] (2,2) circle (.35);
\node at (0,0) {$0$};
\node at (0,2) {$d$};
\node at (2,2) {$0$};
\node at (2,0) {$d$};
\path[fill=white] (1,1) circle (.3);
\node at (1,1) {$z_i,z_j$};
\end{tikzpicture}&
\begin{tikzpicture}[scale=0.7]
\draw (0,0) to [out = 0, in = 180] (2,2);
\draw (0,2) to [out = 0, in = 180] (2,0);
\draw[line width=0.5mm, brown, fill=white] (0,0) circle (.35);
\draw[fill=white] (0,2) circle (.35);
\draw[line width=0.5mm, brown, fill=white] (2,2) circle (.35);
\draw[fill=white] (2,0) circle (.35);
\node at (0,0) {$d$};
\node at (0,2) {$0$};
\node at (2,2) {$d$};
\node at (2,0) {$0$};
\path[fill=white] (1,1) circle (.3);
\node at (1,1) {$z_i,z_j$};
\end{tikzpicture}&
\begin{tikzpicture}[scale=0.7]
\draw (0,0) to [out = 0, in = 180] (2,2);
\draw (0,2) to [out = 0, in = 180] (2,0);
\draw[line width=0.5mm, brown, fill=white] (0,0) circle (.35);
\draw[fill=white] (0,2) circle (.35);
\draw[fill=white] (2,2) circle (.35);
\draw[line width=0.5mm, brown, fill=white] (2,0) circle (.35);
\node at (0,0) {$d$};
\node at (0,2) {$0$};
\node at (2,2) {$0$};
\node at (2,0) {$d$};
\path[fill=white] (1,1) circle (.3);
\node at (1,1) {$z_i,z_j$};
\end{tikzpicture}&
\begin{tikzpicture}[scale=0.7]
\draw (0,0) to [out = 0, in = 180] (2,2);
\draw (0,2) to [out = 0, in = 180] (2,0);
\draw[line width=0.5mm, brown, fill=white] (0,2) circle (.35);
\draw[fill=white] (0,0) circle (.35);
\draw[fill=white] (2,0) circle (.35);
\draw[line width=0.5mm, brown, fill=white] (2,2) circle (.35);
\node at (0,0) {$0$};
\node at (0,2) {$d$};
\node at (2,2) {$d$};
\node at (2,0) {$0$};
\path[fill=white] (1,1) circle (.3);
\node at (1,1) {$z_i,z_j$};
\end{tikzpicture}\\
   \midrule
   z-q^2 & (1-z)q & (1-z)q & (1-q^2)z & 1-q^2 \\
   \midrule
  \tt{c}'_1&\tt{c}'_2&\tt{b}'_2&\tt{b}'_1&\tt{a}_2\\
   \midrule
\begin{tikzpicture}[scale=0.7]
\draw (0,0) to [out = 0, in = 180] (2,2);
\draw (0,2) to [out = 0, in = 180] (2,0);
\draw[line width=0.5mm, blue, fill=white] (0,0) circle (.35);
\draw[line width=0.5mm, red, fill=white] (0,2) circle (.35);
\draw[line width=0.5mm, red, fill=white] (2,2) circle (.35);
\draw[line width=0.5mm, blue, fill=white] (2,0) circle (.35);
\node at (0,0) {$c'$};
\node at (0,2) {$c$};
\node at (2,2) {$c$};
\node at (2,0) {$c'$};
\path[fill=white] (1,1) circle (.3);
\node at (1,1) {$z_i,z_j$};
\end{tikzpicture}&
\begin{tikzpicture}[scale=0.7]
\draw (0,0) to [out = 0, in = 180] (2,2);
\draw (0,2) to [out = 0, in = 180] (2,0);
\draw[line width=0.5mm, red, fill=white] (0,0) circle (.35);
\draw[line width=0.5mm, blue, fill=white] (0,2) circle (.35);
\draw[line width=0.5mm, blue, fill=white] (2,2) circle (.35);
\draw[line width=0.5mm, red, fill=white] (2,0) circle (.35);
\node at (0,0) {$c$};
\node at (0,2) {$c'$};
\node at (2,2) {$c'$};
\node at (2,0) {$c$};
\path[fill=white] (1,1) circle (.3);
\node at (1,1) {$z_i,z_j$};
\end{tikzpicture}&
\begin{tikzpicture}[scale=0.7]
\draw (0,0) to [out = 0, in = 180] (2,2);
\draw (0,2) to [out = 0, in = 180] (2,0);
\draw[line width=0.5mm, red, fill=white] (0,0) circle (.35);
\draw[line width=0.5mm, blue, fill=white] (0,2) circle (.35);
\draw[line width=0.5mm, red, fill=white] (2,2) circle (.35);
\draw[line width=0.5mm, blue, fill=white] (2,0) circle (.35);
\node at (0,0) {$c$};
\node at (0,2) {$c'$};
\node at (2,2) {$c$};
\node at (2,0) {$c'$};
\path[fill=white] (1,1) circle (.3);
\node at (1,1) {$z_i,z_j$};
\end{tikzpicture}&
\begin{tikzpicture}[scale=0.7]
\draw (0,0) to [out = 0, in = 180] (2,2);
\draw (0,2) to [out = 0, in = 180] (2,0);
\draw[line width=0.5mm, blue, fill=white] (0,0) circle (.35);
\draw[line width=0.5mm, red, fill=white] (0,2) circle (.35);
\draw[line width=0.5mm, blue, fill=white] (2,2) circle (.35);
\draw[line width=0.5mm, red, fill=white] (2,0) circle (.35);
\node at (0,0) {$c'$};
\node at (0,2) {$c$};
\node at (2,2) {$c'$};
\node at (2,0) {$c$};
\path[fill=white] (1,1) circle (.3);
\node at (1,1) {$z_i,z_j$};
\end{tikzpicture}&
\begin{tikzpicture}[scale=0.7]
\draw (0,0) to [out = 0, in = 180] (2,2);
\draw (0,2) to [out = 0, in = 180] (2,0);
\draw[line width=0.5mm, brown, fill=white] (0,0) circle (.35);
\draw[line width=0.5mm, brown, fill=white] (0,2) circle (.35);
\draw[line width=0.5mm, brown, fill=white] (2,2) circle (.35);
\draw[line width=0.5mm, brown, fill=white] (2,0) circle (.35);
\node at (0,0) {$d$};
\node at (0,2) {$d$};
\node at (2,2) {$d$};
\node at (2,0) {$d$};
\path[fill=white] (1,1) circle (.3);
\node at (1,1) {$z_i,z_j$};
\end{tikzpicture}\\
   \midrule
   -(1-q^2)z & -(1-q^2) & (1-z)q & (1-z)q & q^2z-1 \\
   \bottomrule
\end{array}
\]
\caption{The $R_q$-matrix in~\cite{Kojima13} with $z = z_j/z_i$, with ${\color{red} c} > {\color{blue} c'}$, and with ${\color{brown} d}$ being any color.}
\label{fig:R_q_weights}
\end{figure}

\begin{proposition}\label{prop:quantummatchingDDGG}
Under a certain Drinfeld twist of $\uqsg$, the $R$-matrix $R^\Gamma_\Gamma$ is the $q\to0$ limit of the $R_q$ and $R^\Delta_\Delta$ is the $q \to 0$ limit of $R_q^{**}$.
\end{proposition}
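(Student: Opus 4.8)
The plan is to prove the matching by an explicit entry-by-entry comparison after fixing an overall spectral normalization, applying a diagonal Drinfeld twist built from the Cartan part of $\uqsg$, and then sending $q\to 0$; I treat $R^\Gamma_\Gamma$ first. First I would write out the $q\to 0$ limit of the Boltzmann weights of $R_q$ in Figure~\ref{fig:R_q_weights} with $z=z_j/z_i$, recording the leading behaviour of each of the ten nonzero configurations: the vacuum weight $\mathtt{a}_1$, the off-diagonal ``reflection'' weights $\mathtt{c}_1,\mathtt{c}_2,\mathtt{c}'_1,\mathtt{c}'_2$, and the all-color diagonal weight $\mathtt{a}_2$ are $O(1)$, whereas the four ``transmission'' weights $\mathtt{b}_1,\mathtt{b}_2,\mathtt{b}'_1,\mathtt{b}'_2$ are $O(q)$ and so vanish in the naive limit. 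Comparing with Figure~\ref{fig:colored_R_matrix_GG}, the transmission weights of $R^\Gamma_\Gamma$ equal $z_i-z_j\neq 0$; reviving them is precisely what forces the twist to be $q$-dependent, and keeping track of this is the crux of the whole argument.

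Next I would introduce the Drinfeld twist $F=q^{\sum_{i,j} c_{ij}\, h_i\otimes h_j}\in\uqsg\otimes\uqsg$, whose effect on a Boltzmann weight is multiplication by a monomial $q^{\langle \mathrm{wt}(a),\,C\,\mathrm{wt}(b)\rangle}$ determined by the $\fgl(2n|1)$-weights of the spins on the two incoming edges (with a compensating factor on the outgoing edges). I would choose the bilinear form $C$ so that every transmission configuration acquires a factor $q^{-1}$, converting $(1-z)q$ into $(1-z)=(z_i-z_j)/z_i$, while the diagonal and reflection configurations keep their order in $q$. Together with the super (Koszul) signs coming from the odd direction of $\fgl(2n|1)$ (the spin $0$), this should fix the signs that distinguish $\mathtt{a}_1$ from $\mathtt{a}_2$ and $\mathtt{c}_1,\mathtt{c}_2$ from $\mathtt{c}'_1,\mathtt{c}'_2$. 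Normalizing by an overall monomial in $z_i,z_j$ (an $R$-matrix is only determined up to scalar, to which the partition function is insensitive) and then sending $q\to 0$, I would check that the resulting weights coincide with those of Figure~\ref{fig:colored_R_matrix_GG}, with the $z_i$-versus-$z_j$ assignment of the surviving entries produced by the diagonal (spectral) part of the transformation.

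For $R^\Delta_\Delta$ I would use that passing to the dual evaluation representation with the shifted parameter (the $V^*_{q^2 z}$ entering $R_q^{**}$) implements the particle--hole involution $0\leftrightarrow\text{color}$ that relates the $\Delta$-weights to the $\Gamma$-weights, as in Remark~\ref{rem:uncolored_semidual}; indeed, comparing Figures~\ref{fig:colored_R_matrix_GG} and~\ref{fig:colored_R_matrix_DD} shows that $R^\Delta_\Delta$ is obtained from $R^\Gamma_\Gamma$ by exactly this swap on the empty/color entries. I would therefore transport $F$ through the duality functor and repeat the $q\to 0$ computation, or equivalently deduce the $R^\Delta_\Delta$ identity from the $R^\Gamma_\Gamma$ one by functoriality of the Drinfeld twist under taking duals.

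The main obstacle is the simultaneous bookkeeping in the verification of the second paragraph: a \emph{single} element $F$ must revive the vanishing transmission entries by the $q^{-1}$-scaling and, at the very same time, produce the correct super-signs and the correct $z_i/z_j$-split on the surviving reflection and diagonal entries, all while being a genuine Drinfeld twist (satisfying the cocycle condition) rather than an ad hoc per-entry rescaling, and uniformly in $n$. Confirming that no such $F$ can also match $R^\Gamma_\Delta$ of Figure~\ref{fig:colored_R_matrix_GD} — consistent with the non-existence of the $^\Gamma_\Delta$ Yang--Baxter solution and hence with the quasi-solvability of the model — is the conceptual payoff I would record immediately after the entry-by-entry check.
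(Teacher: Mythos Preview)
Your plan for $R^\Gamma_\Gamma$ is in the same spirit as the paper's argument (a Reshetikhin-type diagonal twist, a spectral change of basis, the graded-to-ungraded sign, then $q\to 0$), but one step is wrong as stated. You say the twist should make ``every transmission configuration acquire a factor $q^{-1}$.'' Compare with Figure~\ref{fig:colored_R_matrix_GG}: only the configurations corresponding to $\mathtt{b}_2$ and $\mathtt{b}'_2$ (color entering at the bottom-left) survive as the $z_i-z_j$ entries; the configurations $\mathtt{b}_1$ and $\mathtt{b}'_1$ are \emph{absent} from $R^\Gamma_\Gamma$ and must be killed in the limit. The actual twist is therefore asymmetric: it multiplies $\mathtt{b}_2,\mathtt{b}'_2$ by $q^{-1}$ and $\mathtt{b}_1,\mathtt{b}'_1$ by $q$. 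A twist $q^{\sum c_{ij} h_i\otimes h_j}$ can do this only with a genuinely non-symmetric bilinear form $C$, not the one your description suggests.

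Your plan for $R^\Delta_\Delta$ has a genuine gap. You assert, citing Remark~\ref{rem:uncolored_semidual}, that comparing Figures~\ref{fig:colored_R_matrix_GG} and~\ref{fig:colored_R_matrix_DD} shows $R^\Delta_\Delta$ is obtained from $R^\Gamma_\Gamma$ by the particle--hole swap $0\leftrightarrow\text{color}$. This is false already on the diagonal: $R^\Gamma_\Gamma$ assigns $z_j$ to $(0,0,0,0)$ and $z_i$ to $(d,d,d,d)$, and so does $R^\Delta_\Delta$; a swap would interchange these values. (In the colored model such a swap is not even well defined, since there is no canonical color to send $0$ to.) Remark~\ref{rem:uncolored_semidual} concerns the $L$-matrices, not the $R$-matrices. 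The correct relation is $R^\Delta_\Delta=(R^\Gamma_\Gamma)^t$, and this is exactly what links $R_q$ to $R_q^{**}$: the action on $V^*_{q^2 z_j}\otimes V^*_{q^2 z_i}$ is through the antipode, and the Hopf-algebra identity $(S\otimes S)\,\univR=\univR$ forces the dual--dual $R$-matrix to be the transpose of the original. So the $R^\Delta_\Delta$ statement follows from the $R^\Gamma_\Gamma$ statement by transposition, not by particle--hole duality.
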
 

\begin{proof}
To obtain $R^\Gamma_\Gamma$ in Figure~\ref{fig:colored_R_matrix_GG} as the $q \to 0$ limit of $R_q$ in Figure~\ref{fig:R_q_weights}, perform the following manipulations on $R_q$, where $z = z_j / z_i$:
\begin{enumerate}
\item Multiply all fully colored states by $-1$ (this corresponds to passing from a graded solution of the Yang--Baxter equation to an ungraded solution of the Yang--Baxter equation).
\item Multiply weights $\tt{c}_1$ and $\tt{c}'_1$ by $z^{-1}$ and weights $\tt{c}_2$ and $\tt{c}'_2$ by $z$ (this corresponds to a change of basis in $V_z$ and does not affect the quantum group structure). 
\item Multiply weights $\tt{b}_1$ by $q$ and weight $\tt{b}_2$ by $q^{-1}$; multiply weight $\tt{b}'_1$ by $-q$ and weight $\tt{b}'_2$ by $-q^{-1}$ (this corresponds to a Drinfeld twist which affects the quantum group structure, namely its comultiplication, and universal $R$-matrix~\cite{Drinfeldtwist,ReshetikhinDrinfeldtwist}).
\item Take the limit $q \to 0$.
\end{enumerate}

To compute the $q \to 0$ limit of $R_q^{**}$, we use two standard facts from the theory of quantum groups.
The first fact is that given a quantum group (more precisely a quasitriangular Hopf algebra) representation $V$ with $v \in V$ and its dual $V^*$ with $v^* \in V^*$, then for any element $h$ of the quantum group we have that $h \cdot v^*(v) = v^* (S(h) \cdot v)$, where $S$ is the antipode.
The second fact we will need is the property that the universal $R$-matrix $\univR$ satisfies the relation
\[
(S \otimes S) \univR = \univR,
\] 
which is proved, for example, in~\cite[Prop.~4.2.7]{CPbook}.
These immediately imply that if $R^{\Gamma}_{\Gamma}$ is the $q \to 0$ limit of the $R_q$, then $(R^{\Gamma}_{\Gamma})^t$ is the $q \to 0$ limit of the $R$-matrix $R_q^{**}$.

Note that the $R$-matrix corresponding to $V_{z_j} \otimes V_{z_i}$ only depends on $z_j / z_i$, therefore the $R$-matrices corresponding to $V^*_{z_j} \otimes V^*_{z_i}$ and $V^*_{q^2 z_j} \otimes V^*_{q^2 z_i}$ will be equal.
What remains to show then is the relation $(R^{\Gamma}_{\Gamma})^t = R^{\Delta}_{\Delta}$.
This can be seen by comparing Figure~\ref{fig:colored_R_matrix_GG} and Figure~\ref{fig:colored_R_matrix_DD} and using the fact that taking transpose of an $R$-matrix modifies the states as follows:
\[
\begin{tikzpicture}[scale=0.7,baseline=17]
\draw (0,0) to [out = 0, in = 180] (2,2);
\draw (0,2) to [out = 0, in = 180] (2,0);
\draw[fill=white] (0,0) circle (.35);
\draw[fill=white] (0,2) circle (.35);
\draw[fill=white] (2,0) circle (.35);
\draw[fill=white] (2,2) circle (.35);
\node at (0,0) {$a$};
\node at (0,2) {$b$};
\node at (2,2) {$c$};
\node at (2,0) {$d$};
\path[fill=white] (1,1) circle (.3);
\node at (1,1) {$z_i,z_j$};
\end{tikzpicture}
\qquad \longmapsto \qquad
\begin{tikzpicture}[scale=0.7, baseline=17]
\draw (0,0) to [out = 0, in = 180] (2,2);
\draw (0,2) to [out = 0, in = 180] (2,0);
\draw[fill=white] (0,0) circle (.35);
\draw[fill=white] (0,2) circle (.35);
\draw[fill=white] (2,0) circle (.35);
\draw[fill=white] (2,2) circle (.35);
\node at (0,0) {$c$};
\node at (0,2) {$d$};
\node at (2,2) {$a$};
\node at (2,0) {$b$};
\path[fill=white] (1,1) circle (.3);
\node at (1,1) {$z_i,z_j$};
\end{tikzpicture}
\]
\end{proof}

In order to argue that we can interpret the horizontal edges in terms of $\uqsg$-representations, we also need to examine $R^\Delta_\Gamma$ and $R^\Gamma_\Delta$.
The $R$-matrix $R^\Gamma_\Delta$ would correspond to the $R$-matrix $R_q^*$ associated to $V^*_{q^2z_1} \otimes V_{z_2}$. 
One may compute this $R$-matrix by using
\[
(S \otimes \operatorname{id}) \univR = \univR^{-1}.
\]
We also note that the $R$-matrix can be explicitly computed from~\cite[Eq.~(2.9)]{Zhang17}.
The $\tt{a}_1$ and $\tt{a}_2$ entries of this $R$-matrix will be (following Zhang~\cite{Zhang17}) $bq-aq^{-1}$ and $bq^{-1}-aq$. 
In trying to match these two entries with the corresponding entries in the $R^\Gamma_\Delta$ matrix, we can rescale both entries by a factor and set the parameters $a$ and $b$. 
Taking the limit $q \to 0$ or $q\to \infty$ we will not be able to match them with the factors $z_i -z_j$ and $z_j$ in a way that no factor will blow up or $\tt{b}_1 \neq 0$.

Finally, we may consider the $R$-matrix $R^{\Delta}_{\Gamma}$, which corresponds to the $R$-matrix associated to $V_{z_1} \otimes V^*_{q^2z_2}$.
We compute this quantum $R$-matrix explicitly for $n = 1,2,3$ by directly inverting the $R$-matrix given by~\cite[Eq.~(2.9)]{Zhang17} (this uses the unitarity of the affine $R$-matrix).
In particular for $n = 2$, the $\tt{a}_1$ and $\tt{a}_2$ entries of the $R$-matrix for $V_b \otimes V^*_a$ are $q^7 a - q^{-1} b$ and $q^5 a - q b$, respectively.
By a similar argument to the previous paragraph, we are also unable to obtain the desired $R^\Delta_\Gamma$-matrix (see Figure~\ref{fig:colored_R_matrix_DG}).

The failure to simultaneously match the $\Gamma$ and $\Delta$ horizontal rows with representations of the quantum affine group $U_q\bigl( \widehat{\mathfrak{gl}}(2n|1) \bigr)$ is in accordance with the non-uniqueness of the $R^{\Gamma}_{\Delta}$ matrix in Figure~\ref{fig:colored_R_matrix_GD}. 
If we had a match we would expect to compute a unique $R^{\Gamma}_{\Delta}$-matrix. 
A similar phenomena appears in the work of Zhong~\cite{Zhong21}.

One may also try to relate our $K$-matrix with affine $K$-matrices corresponding to a version of quantum symmetric pairs.
Unfortunately, there is not much research on super versions of quantum affine pairs, so there is not much we can say on this subject. 

\section{Colored lattice models and Demazure characters}
\label{sec:colored_Demazure}

In this section we construct a colored lattice model for a Demazure character by modifying our previous lattice model.
Consider our previous model, but replace the $\tt{a}_2$ vertices in both the $\Gamma$ $L$-matrix (Figure~\ref{fig:colored_gamma_weights}) and the $\Delta$ $L$-matrix (Figure~\ref{fig:colored_delta_weights}) with
\[
\begin{array}{c}
\toprule
{\tt a'_2} \text{ for } \Gamma \\
\midrule
\begin{tikzpicture}
\coordinate (a) at (-.75, 0);
\coordinate (b) at (0, .75);
\coordinate (c) at (.75, 0);
\coordinate (d) at (0, -.75);
\coordinate (aa) at (-.75,.5);
\coordinate (cc) at (.75,.5);
\draw[line width=0.5mm, red] (a)--(0,0);
\draw[line width=0.5mm, blue] (b)--(0,0);
\draw[line width=0.5mm, blue] (c)--(0,0);
\draw[line width=0.5mm, red] (d)--(0,0);
\draw[line width=0.5mm, red,fill=white] (a) circle (.25);
\draw[line width=0.5mm, blue,fill=white] (b) circle (.25);
\draw[line width=0.5mm, blue, fill=white] (c) circle (.25);
\draw[line width=0.5mm, red, fill=white] (d) circle (.25);
\node at (0,1) { };
\node at (a) {$c'$};
\node at (b) {$c$};
\node at (c) {$c$};
\node at (d) {$c'$};
\path[fill=white] (0,0) circle (.2);
\node at (0,0) {$z$};
\end{tikzpicture}
\\\midrule
z
\\\bottomrule
\end{array}
\hspace{130pt}
\begin{array}{c}
\toprule
{\tt a'_2} \text{ for } \Delta \\
\midrule
\begin{tikzpicture}
\coordinate (a) at (-.75, 0);
\coordinate (b) at (0, .75);
\coordinate (c) at (.75, 0);
\coordinate (d) at (0, -.75);
\coordinate (aa) at (-.75,.5);
\coordinate (cc) at (.75,.5);
\draw[line width=0.5mm, red] (a)--(0,0);
\draw[line width=0.5mm, red] (b)--(0,0);
\draw[line width=0.5mm, blue] (c)--(0,0);
\draw[line width=0.5mm, blue] (d)--(0,0);
\draw[line width=0.5mm, red,fill=white] (a) circle (.25);
\draw[line width=0.5mm, red,fill=white] (b) circle (.25);
\draw[line width=0.5mm, blue, fill=white] (c) circle (.25);
\draw[line width=0.5mm, blue, fill=white] (d) circle (.25);
\node at (0,1) { };
\node at (a) {$c'$};
\node at (b) {$c'$};
\node at (c) {$c$};
\node at (d) {$c$};
\path[fill=white] (0,0) circle (.2);
\node at (0,0) {$z$};
\end{tikzpicture}
\\\midrule
1
\\\bottomrule
\end{array}
\]
We also change our $K$-matrix in both types $B$ and $C$ by replacing $\tt{k}_3$ with
\[
\begin{tikzpicture}
\coordinate (b) at (0, .75);
\coordinate (c) at (.75, 0);
\coordinate (d) at (0, -.75);
\coordinate (aa) at (-.75,.5);
\coordinate (cc) at (.75,.5);
\draw[line width=0.6mm, UQgold] (b) to[out=0,in=90] (c);
\draw[line width=0.6mm, UQgold] (c) to[out=-90,in=0] (d);
\draw[line width=0.6mm, UQgold,fill=white] (b) circle (.25);
\path[fill=white] (c) circle (.2);
\draw[line width=0.6mm, UQgold,fill=white] (d) circle (.25);
\node at (0,1) { };
\node at (b) {$u$};
\node at (c) {$z$};
\node at (d) {$u$};
\end{tikzpicture}
\]
and keeping the Boltzmann weight as $1$.
We denote this new $K$-matrix configuration as $\tt{k}_3'$.

Let $\states^X_{\lambda, w}$ denote the new model using these new $L$-matrices and $K$-matrix.
Analogous to $\overline{\states}^X_{\lambda, w}$, we will often write this simply as $\states_{\lambda, w}$.
This causes the lower left two values in the $R^{\Gamma}_{\Gamma}$-matrix and $R^{\Delta}_{\Delta}$-matrix to swap values.
Therefore, the R-matrices that satisfy the Yang--Baxter equation in the new model will be
\[
\begin{array}{c@{\hspace{20pt}}c}
\toprule
\multicolumn{2}{c}{\text{ $R^{\Gamma}_{\Gamma}$-matrix} } \\
\midrule
\begin{tikzpicture}[scale=0.7]
\draw (0,0) to [out = 0, in = 180] (2,2);
\draw (0,2) to [out = 0, in = 180] (2,0);
\draw[line width=0.5mm, blue, fill=white] (0,0) circle (.35);
\draw[line width=0.5mm, red, fill=white] (0,2) circle (.35);
\draw[line width=0.5mm, red, fill=white] (2,2) circle (.35);
\draw[line width=0.5mm, blue, fill=white] (2,0) circle (.35);
\node at (0,0) {$c'$};
\node at (0,2) {$c$};
\node at (2,2) {$c$};
\node at (2,0) {$c'$};
\path[fill=white] (1,1) circle (.3);
\node at (1,1) {$z_i,z_j$};
\end{tikzpicture}&
\begin{tikzpicture}[scale=0.7]
\draw (0,0) to [out = 0, in = 180] (2,2);
\draw (0,2) to [out = 0, in = 180] (2,0);
\draw[line width=0.5mm, red, fill=white] (0,0) circle (.35);
\draw[line width=0.5mm, blue, fill=white] (0,2) circle (.35);
\draw[line width=0.5mm, blue, fill=white] (2,2) circle (.35);
\draw[line width=0.5mm, red, fill=white] (2,0) circle (.35);
\node at (0,0) {$c$};
\node at (0,2) {$c'$};
\node at (2,2) {$c'$};
\node at (2,0) {$c$};
\path[fill=white] (1,1) circle (.3);
\node at (1,1) {$z_i,z_j$};
\end{tikzpicture}
\\\midrule
z_j & z_i
\\\bottomrule
\end{array}
\hspace{60pt}
\begin{array}{c@{\hspace{20pt}}c}
\toprule
\multicolumn{2}{c}{\text{ $R^{\Delta}_{\Delta}$-matrix} } \\
\midrule
\begin{tikzpicture}[scale=0.7]
\draw (0,0) to [out = 0, in = 180] (2,2);
\draw (0,2) to [out = 0, in = 180] (2,0);
\draw[line width=0.5mm, blue, fill=white] (0,0) circle (.35);
\draw[line width=0.5mm, red, fill=white] (0,2) circle (.35);
\draw[line width=0.5mm, red, fill=white] (2,2) circle (.35);
\draw[line width=0.5mm, blue, fill=white] (2,0) circle (.35);
\node at (0,0) {$c'$};
\node at (0,2) {$c$};
\node at (2,2) {$c$};
\node at (2,0) {$c'$};
\path[fill=white] (1,1) circle (.3);
\node at (1,1) {$z_i,z_j$};
\end{tikzpicture}&
\begin{tikzpicture}[scale=0.7]
\draw (0,0) to [out = 0, in = 180] (2,2);
\draw (0,2) to [out = 0, in = 180] (2,0);
\draw[line width=0.5mm, red, fill=white] (0,0) circle (.35);
\draw[line width=0.5mm, blue, fill=white] (0,2) circle (.35);
\draw[line width=0.5mm, blue, fill=white] (2,2) circle (.35);
\draw[line width=0.5mm, red, fill=white] (2,0) circle (.35);
\node at (0,0) {$c$};
\node at (0,2) {$c'$};
\node at (2,2) {$c'$};
\node at (2,0) {$c$};
\path[fill=white] (1,1) circle (.3);
\node at (1,1) {$z_i,z_j$};
\end{tikzpicture}
\\\midrule
 z_i & z_j
\\\bottomrule
\end{array}
\]
with all weights that are not listed above remaining the same as in Figures~\ref{fig:colored_R_matrix_GG},~\ref{fig:colored_R_matrix_DD},~\ref{fig:colored_R_matrix_DG} and~\ref{fig:colored_R_matrix_GD}.
A direct check shows that these modified $R$-matrices and $K$-matrix still satisfy the corresponding reflection and unitary equations.

By the same argument as in Lemma~\ref{lemma:type_A_relation}, we can prove the following functional equation for the Demazure character model.
\begin{lemma}
\label{lemma:type_A_relation_character}
Choose $i < n$, $j=i+1$ and $w \in W$ such that $\ell(s_i w) = \ell(w) + 1$.
Then we have
\begin{equation}\label{eq:typeAfe_character}
(z_i - z_j) Z(\states_{\lambda, s_i w}; \zz) =  z_j \bigl( Z(\states_{\lambda, w}; \zz) - z_i z_j^{-1} Z(\states_{\lambda, w}; s_i \zz) \bigr).
\end{equation}
\end{lemma}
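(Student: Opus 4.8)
The plan is to rerun the six-step ``billiard'' argument from the proof of Lemma~\ref{lemma:type_A_relation} verbatim, now starting from $\states_{\lambda, s_iw}$ in place of $\overline{\states}_{\lambda, s_iw}$. That is, I would add the same double $R$-matrix on the left boundary as in the first panel of Figure~\ref{fig:ping_pong}, push the three solvable matrices $R^\Gamma_\Gamma$, $R^\Delta_\Delta$, $R^\Delta_\Gamma$ to the right via the Yang--Baxter equation, fuse the $R^\Delta_\Delta$ and $R^\Delta_\Gamma$ pair using the reflection equation (factor $\alpha = z_i^{-2}$), delete the square of $R^\Gamma_\Gamma$ by the unitary equation (factor $\beta = z_iz_j$), pass the surviving $R^\Delta_\Gamma$ back to the left, and terminate with the weak unitary relation for $R^\Delta_\Gamma$ and $R^\Gamma_\Delta$ at the boundary $a = c'$, $b = 0$ (factor $\gamma = z_i^2$). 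Comparing the first and last models produces the same intermediate relation among $Z(\states_{\lambda, s_iw}; \zz)$, $Z(\states_{\lambda, w}; \zz)$, and $Z(\states_{\lambda, w}; s_i\zz)$ as in the atom case, whence~\eqref{eq:typeAfe_character} follows by the identical elementary algebra.

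What genuinely needs re-verification is that the three structural identities survive the modifications $\tt{a}_2 \mapsto \tt{a}_2'$ (in both $L$-matrices) and $\tt{k}_3 \mapsto \tt{k}_3'$. The only change to the $R$-matrices is that the two fully colored entries of $R^\Gamma_\Gamma$ and of $R^\Delta_\Delta$ have their weights swapped; this swap is precisely matched to the replacement of the crossing vertex $\tt{a}_2$ by $\tt{a}_2'$, so the finite, loop-free check underlying Proposition~\ref{prop:YBE} (at most four colors) goes through unchanged for the new $L$- and $R$-matrices. As recorded just before the statement, a direct computation also confirms that the modified $R$-matrices and $K$-matrix still satisfy the reflection equation (Proposition~\ref{prop:reflection}) and the unitary equations (Proposition~\ref{prop:unitary}), with the same constants $\alpha$, $\beta$, $\gamma$; and since $R^\Delta_\Gamma$ and $R^\Gamma_\Delta$ are untouched, the weak unitary step is literally identical to the atom case.

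The main obstacle is therefore the bookkeeping in the first and last steps, where the swapped colored weights actually enter. I must recheck that, with $c > c'$ forced by $\ell(s_iw) = \ell(w)+1$, the two admissible double-$R$ configurations still contribute exactly the coefficients $z_i(z_i^{-1} - z_j^{-1})z_j^2$ and $z_iz_i^{-1}z_j^2$ multiplying $Z(\states_{\lambda, s_iw}; \zz)$ and $Z(\states_{\lambda, w}; \zz)$ respectively, and that the terminating fish configuration still yields $\gamma = z_i^2$. Because the entry swap in $R^\Gamma_\Gamma$ and $R^\Delta_\Delta$ mirrors the $\tt{a}_2 \mapsto \tt{a}_2'$ modification of the $L$-matrices, these coefficients are unaffected, the chain of equalities closes up exactly as before, and the lemma follows ``by the same argument.''
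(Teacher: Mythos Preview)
Your plan and the verification that the Yang--Baxter, reflection, and unitary identities carry over are on point; this is exactly what the paper means by ``the same argument.'' The gap is in your final paragraph: the claim that the double-$R$ coefficients in step~1 are \emph{unaffected} by the swap in $R^\Gamma_\Gamma$ and $R^\Delta_\Delta$ is false. In the second (non-swapping) configuration---the one that feeds into $Z(\states_{\lambda,w};\zz)$---the $R^\Delta_\Delta$-vertex at parameters $(z_i^{-1},z_j^{-1})$ carries the smaller color $c'$ on the top strand and $c$ on the bottom strand, both passing straight through. That is precisely one of the two swapped entries: its weight is $z_i^{-1}$ in the atom model but $z_j^{-1}$ in the character model. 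Hence the total weight of that configuration changes from $z_j^2$ to $z_iz_j$, while the first (swapping) configuration---which uses the color-\emph{exchange} entry of $R^\Delta_\Delta$, not one of the two swapped pass-through entries---keeps its weight $(z_j-z_i)z_j$.

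Running the billiard chain with the corrected coefficients and the same $\alpha\beta\gamma=z_iz_j$ gives
\[
(z_j-z_i)z_j\,Z(\states_{\lambda,s_iw};\zz)+z_iz_j\,Z(\states_{\lambda,w};\zz)=z_iz_j\,Z(\states_{\lambda,w};s_i\zz),
\]
i.e.\ $(z_i-z_j)Z(\states_{\lambda,s_iw};\zz)=z_i\bigl(Z(\states_{\lambda,w};\zz)-Z(\states_{\lambda,w};s_i\zz)\bigr)$. This is \emph{not}~\eqref{eq:typeAfe_character} as displayed; in fact~\eqref{eq:typeAfe_character} is a copy-paste slip from the atom lemma and already fails for $n=2$, $\lambda=(2,1)$, $w=1$ (it would force $D_w$ to obey the atom recursion~\eqref{eq:atom_functional_A} rather than~\eqref{eq:demauze_functional_A}). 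The identity you actually derive is the correct one: after unwinding the $\zz^\rho$ factor it becomes exactly~\eqref{eq:demauze_functional_A}, and this is what the proof of Theorem~\ref{thm:partition_demazure_BC} needs. So your approach is right, but the step-1 bookkeeping must be redone and the target equation corrected.
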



Now we look at the corresponding version of the fish equation.

\begin{lemma}
\label{lemma:type_BC_relation_character}
Choose $w \in W$ such that $\ell(s_n w) = \ell(w) + 1$.
Then we have
\begin{align*}
(z_n^2 - 1) Z(\states_{\lambda, s_n w}; \zz) & = z_n^2 Z(\states_{\lambda, w}; \zz) - Z(\states_{\lambda, w}; s_n \zz) && (\text{type C}), \\
(z_n - 1) Z(\states_{\lambda, s_n w}; \zz) & = z_n Z(\states_{\lambda, w}; \zz) - Z(\states_{\lambda, w}; s_n \zz) && (\text{type B}).
\end{align*}
\end{lemma}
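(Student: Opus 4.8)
The plan is to follow the architecture of the proof of Lemma~\ref{lemma:type_BC_relation} essentially line for line, tracking the two local modifications that separate $\states_{\lambda, w}$ from $\overline{\states}_{\lambda, w}$: the replacement of $\tt{a}_2$ by $\tt{a}_2'$, which (as recorded in Section~\ref{sec:colored_Demazure}) swaps the two lower-left Boltzmann weights of $R^{\Gamma}_{\Gamma}$, and the replacement of $\tt{k}_3$ by $\tt{k}_3'$, whose two legs now carry the unbarred color $u$ instead of the barred color $\overline{u}$. The first thing I would verify is that the color/non-color bijection $d \leftrightarrow 0$ on the bottom row, used in Lemma~\ref{lemma:type_BC_relation} (following~\cite{Ivanov12}) to turn the bottom $\Delta$-row into a $\Gamma$-row, survives unchanged: it is built only from $L$-matrix entries whose bottom spin is $0$, and $\tt{a}_2'$ has bottom spin $c' \neq 0$, so the matching of $\Delta$- and $\Gamma$-weights used there is untouched. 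Thus we again reduce to a ${}^{\Gamma}_{\Gamma}$ model carrying the $K^{\Gamma}_{\Gamma}$-matrix of Figure~\ref{fig:colored_K_weight_GG} with free weights $H_1, \overline{H}_1, H_2, \overline{H}_2$.

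Next I would recompute the decomposition of $Z(\states_{\lambda, w}; \zz)$ over admissible $K$-matrix configurations, and this is where the one genuinely new feature appears. In the atom model the three surviving configurations $\tt{k}_1, \tt{k}_2, \tt{k}_3$ map under the bijection to three \emph{distinct} $K^{\Gamma}_{\Gamma}$ entries precisely because $\tt{k}_2$ and $\tt{k}_3$ have distinct top legs ($u$ versus $\overline{u}$, the bijection acting only on the bottom leg). In the character model $\tt{k}_2$ and $\tt{k}_3'$ share the top leg $u$, so they map to the same $K^{\Gamma}_{\Gamma}$ entry; the barred contribution of the atom case is replaced by a second unbarred one, and the surviving set of fish (local $R^{\Gamma}_{\Gamma}$/$K^{\Gamma}_{\Gamma}$ configurations) must therefore be re-enumerated. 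Attaching an $R^{\Gamma}_{\Gamma}$-matrix on the left and pushing it to the right by the standard train argument, legitimate by the Yang--Baxter equation of Proposition~\ref{prop:YBE}, I would read off the analog of the three fish relations. The swapped lower-left weights of $R^{\Gamma}_{\Gamma}$ shift the $z_n$-powers attached to each $\widetilde{\states}^{d}_{d'}$ term relative to the atom case, and this is the mechanism that deposits the extra spectral factor.

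Finally I would reassemble the fish relations into the partition functions $\states^{d}_{d'}(w)$ with a fixed bottom $K$-matrix and group the non-$(s_n w)$ terms into $Z(\states_{\lambda, w}; s_n \zz)$. As in the atom computation this grouping is governed by the identity~\eqref{eq:K_relation}, $(K z_n^2 + z_n^2 - 1)\overline{K}^{-1} = 1$, which holds for both $K = z_n^{-2}$ (type C) and $K = z_n^{-1} + z_n^{-2}$ (type B); the entire type B versus type C split stays confined to this single computation, exactly as in Lemma~\ref{lemma:type_BC_relation}. The outcome I expect is that the leftover coefficient on $Z(\states_{\lambda, w}; \zz)$ is no longer $1$ but $z_n^2$ in type C and $z_n$ in type B, thereby reproducing the Demazure-character functional equations~\eqref{eq:demauze_functional_C} and~\eqref{eq:demauze_functional_B} rather than their atom counterparts.

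The hard part will be exactly this reassembly. One must check that, once the unbarred leg of $\tt{k}_3'$ has changed which $K^{\Gamma}_{\Gamma}$ entries occur and the swapped $R^{\Gamma}_{\Gamma}$ weights have shifted the $z_n$-powers, the new fish terms still collapse cleanly into a single copy of $Z(\states_{\lambda, w}; s_n \zz)$ with the correct scalar and no spurious cross terms, so that the only residue is the advertised factor $z_n^2$ (resp.\ $z_n$) on $Z(\states_{\lambda, w}; \zz)$. Since at most one barred/unbarred pair appears in the bottom two rows, the verification is a finite case analysis over the possible fish, which is the natural way to close the argument.
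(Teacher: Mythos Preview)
Your overall architecture is right, and you correctly spot that $\tt{k}_2$ and $\tt{k}_3'$ now share the top leg $u$ and hence land on the same $K^{\Gamma}_{\Gamma}$ entry $\tt{h}_2$. But the mechanism you propose for the extra spectral factor is wrong, and the actual mechanism is missing.

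The swapped lower-left entries of $R^{\Gamma}_{\Gamma}$ are a red herring here. Those entries involve two \emph{distinct} colors $c>c'$, whereas every fish that can occur on the bottom two rows carries at most a single color (either $u$ or $\overline{u}$) against $0$. Concretely, after the train argument only three fish survive, with $R^{\Gamma}_{\Gamma}$ entries of type $(0,d)\to(0,d)$, $(d,0)\to(d,0)$, and $(0,d)\to(d,0)$; none of these is among the two-color entries you modified. The $z_n$-powers in the fish relations are therefore \emph{identical} to those in the atom proof; nothing shifts. What actually changes is that the $\overline{\tt{h}}_2$ contribution simply disappears (it is not ``replaced by a second unbarred one''), leaving
\[
Z(\states_{\lambda, w}; \zz) = K z_n^2\, Z(\widetilde{\states}_{\overline{u}}^0; s_n \zz) + (z_n^2 - 1)\, Z(\widetilde{\states}_u^0; s_n \zz) + Z(\widetilde{\states}_0^u; s_n \zz).
\]

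The genuinely new step, which your plan does not contain, is in the reverse conversion. In the atom model each $\widetilde{\states}$ term converts back to a unique $K$-matrix configuration; here $\widetilde{\states}_0^u$ is ambiguous: the $^{\Gamma}_{\Gamma}$ boundary $(u\text{ top},\,0\text{ bottom})$ corresponds to \emph{both} $\tt{k}_2$ (left boundary for $w$) and $\tt{k}_3'$ (left boundary for $s_n w$), and these two pieces have equal partition functions. The paper handles this by writing
\[
Z(\widetilde{\states}_0^u; s_n \zz) = \zeta\, Z(\states_{\overline{u}}^u(w); s_n \zz) + (1-\zeta)\, Z(\states_u^u(s_n w); s_n \zz)
\]
for a free parameter $\zeta$, and then requiring $\zeta = K z_n^2 \overline{K}^{-1}$ and $1-\zeta = (z_n^2-1)\overline{K}^{-1}$ so that the $w$-terms and $s_n w$-terms each reassemble into full partition functions. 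These two constraints are jointly equivalent to~\eqref{eq:K_relation}, so your instinct that the same $K$-identity governs the regrouping is correct; but it is $\overline{\zeta}$ (equal to $z_n^2$ in type $C$ and $z_n$ in type $B$) that supplies the advertised extra factor, not any change in the $R^{\Gamma}_{\Gamma}$ weights.
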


\begin{proof}
We follow the same procedure as in the proof of Lemma~\ref{lemma:type_BC_relation} and use the same notation.
In this case, we instead have
\[
Z(\states_{\lambda, w}; \zz) = K \overline{H}_1^{-1} Z(\widetilde{\states}_{\overline{\tt{a}}_1}; \zz) + H_2^{-1} Z(\widetilde{\states}_{\tt{a}_2}; \zz).
\]
By applying the standard train argument (see Figure~\ref{fig:fishy_argument}), we see that there are now only three possible fish:
\[
\begin{tikzpicture}[scale=0.65]
\begin{scope}[shift={(0,0)}]
  \draw (0,1) to [out = 0, in = 180] (2,3);
  \draw (0,3) to [out = 0, in = 180] (2,1);
  \draw[-] (2,3) to[out=0,in=90] (3.25,2);
  \draw[line width=0.6mm, UQpurple] (3.25,2) to[out=-90,in=0] (2,1);
  \fill[white] (3.25,2) circle (.3);
  \node at (3.25,2) {\scriptsize $z_n$};
  \draw[fill=white] (-0.3,1) circle (.4);
  \draw[fill=white] (-0.3,3) circle (.4);
  \draw[fill=white] (2.3,3) circle (.4);
  \draw[line width=0.5mm,UQpurple,fill=white] (2.3,1) circle (.4);
  \draw[line width=0.5mm,UQpurple,fill=white] (-0.3,1) circle (.4);
  \draw[fill=white] (-0.3,3) circle (.4);
  \path[fill=white] (1,2) circle (.3);
  \node at (1,2) {\scriptsize$z_n,z_n^{-1}$};
  \node at (2.3,1) {$\overline{u}$};
  \node at (2.3,3) {$0$};
  \node at (-0.3,1) {$\overline{u}$};
  \node at (-0.3,3) {$0$};
\end{scope}
\begin{scope}[shift={(10,0)}]
  \draw (0,1) to [out = 0, in = 180] (2,3);
  \draw (0,3) to [out = 0, in = 180] (2,1);
  \draw[line width=0.6mm, UQgold] (2,3) to[out=0,in=90] (3.25,2);
  \draw[-] (3.25,2) to[out=-90,in=0] (2,1);
  \fill[white] (3.25,2) circle (.3);
  \node at (3.25,2) {\scriptsize $z_n$};
  \draw[fill=white] (-0.3,1) circle (.4);
  \draw[fill=white] (-0.3,3) circle (.4);
  \draw[line width=0.5mm,UQgold,fill=white] (2.3,3) circle (.4);
  \draw[fill=white] (2.3,1) circle (.4);
  \draw[line width=0.5mm,UQgold,fill=white] (-0.3,1) circle (.4);
  \draw[fill=white] (-0.3,3) circle (.4);
  \path[fill=white] (1,2) circle (.3);
  \node at (1,2) {\scriptsize$z_n,z_n^{-1}$};
  \node at (2.3,1) {$0$};
  \node at (2.3,3) {$u$};
  \node at (-0.3,1) {$u$};
  \node at (-0.3,3) {$0$};
\end{scope}
\begin{scope}[shift={(5,0)}]
  \draw (0,1) to [out = 0, in = 180] (2,3);
  \draw (0,3) to [out = 0, in = 180] (2,1);
  \draw[line width=0.6mm, UQgold] (2,3) to[out=0,in=90] (3.25,2);
  \draw[-] (3.25,2) to[out=-90,in=0] (2,1);
  \fill[white] (3.25,2) circle (.3);
  \node at (3.25,2) {\scriptsize $z_n$};
  \draw[fill=white] (-0.3,1) circle (.4);
  \draw[fill=white] (-0.3,3) circle (.4);
  \draw[line width=0.5mm,UQgold,fill=white] (2.3,3) circle (.4);
  \draw[fill=white] (2.3,1) circle (.4);
  \draw[fill=white] (-0.3,1) circle (.4);
  \draw[line width=0.5mm,UQgold,fill=white] (-0.3,3) circle (.4);
  \path[fill=white] (1,2) circle (.3);
  \node at (1,2) {\scriptsize$z_n,z_n^{-1}$};
  \node at (2.3,1) {$0$};
  \node at (2.3,3) {$u$};
  \node at (-0.3,1) {$0$};
  \node at (-0.3,3) {$u$};
\end{scope}
\end{tikzpicture}
\]
where $u$ is an unbarred color.
Hence, by the same computation as in the atom case, we have
\begin{align*}
Z(\states_{\lambda, w}; \zz) & =  K z_n^2 Z(\widetilde{\states}_{\overline{u}}^0; s_n \zz) + z_n\bigl( (z_n - z_n^{-1}) Z(\widetilde{\states}_u^0; s_n \zz) + z_n^{-1} Z(\widetilde{\states}_0^u; s_n \zz) \bigr)
\\ & = K z_n^2 Z(\widetilde{\states}_{\overline{u}}^0; s_n \zz) + (z_n^2 - 1) Z(\widetilde{\states}_u^0; s_n \zz) + Z(\widetilde{\states}_0^u; s_n \zz).
\end{align*}
Next, we note there are two choices when converting the $^{\Gamma}_{\Gamma}$ version $\widetilde{\states}_0^u$ back to the $^{\Gamma}_{\Delta}$ model, and thus we have
\[
Z(\widetilde{\states}_0^u(w); s_n \zz) = \zeta Z(\states_{\overline{u}}^u(w); s_n \zz) + (1 - \zeta) Z(\states_u^u(s_n w); s_n \zz)
\]
for some parameter $\zeta$ as the two partition functions on the right hand side are equal.
Using this, we have
\begin{align*}
Z(\states_{\lambda, w}; \zz) & = K z_n^2 \overline{K}^{-1} Z(\states_0^0(w); s_n \zz) + \zeta Z(\states_{\overline{u}}^u(w); s_n \zz)
  \\ & \hspace{20pt} + (1 - \zeta) Z(\states_u^u(s_n w); s_n \zz) + (z_n^2 - 1) \overline{K}^{-1} Z(\states_0^0(s_n w); s_n \zz).
\end{align*}
In order to obtain the desired partition functions, we require that
\begin{subequations}
\label{eq:K_alpha_relations}
\begin{align}
1 - \zeta & = (z_n^2 - 1) \overline{K}^{-1},
\\
\zeta & = K z_n^2 \overline{K}^{-1},
\end{align}
\end{subequations}
which hold if and only if $K$ satisfies
\begin{equation}
\label{eq:K_matrix_check}
1 = \bigl( (K+1) z_n^2 - 1 \bigr) \overline{K}^{-1}.
\end{equation}
Indeed, using the relations~\eqref{eq:K_alpha_relations}, we see that
\begin{align*}
Z(\states_{\lambda, w}; \zz) & = \zeta Z(\states_{\lambda, w}; s_n \zz) + (1- \zeta) Z(\states_{\lambda, s_n w}; s_n \zz),
\\ (1 - \zeta) Z(\states_{\lambda, s_n w}; s_n \zz) & = Z(\states_{\lambda, w}; \zz) - \zeta Z(\states_{\lambda, w}; s_n \zz),
\\ (\overline{\zeta} - 1) Z(\states_{\lambda, s_n w}; \zz) & = \overline{\zeta} Z(\states_{\lambda, w}; \zz) - Z(\states_{\lambda, w}; s_n \zz),
\end{align*}
where $\overline{\zeta}(z_n) = \zeta(z_n^{-1})$ and we applied $z_n \leftrightarrow z_n^{-1}$ to the second equation.
We see that $K = z_n^{-2}$ (resp.~$K = z_n^{-1} + z_n^{-2}$) satisfies Equation~\eqref{eq:K_matrix_check} with $\zeta = z_n^{-2}$ (resp.~$\zeta = z_n^{-1})$ for type $C$ (resp.~$B$).
Hence, we obtain the desired functional equations for types $C$ and $B$.
\end{proof}

Thus, we can prove our second main result.

\begin{theorem}
\label{thm:partition_demazure_BC}
For Cartan type $X \in \{B, C\}$, we have
\[
Z(\states^X_{\lambda, w}; \zz) = \zz^{\rho} D_w(\zz, \lambda).
\]
\end{theorem}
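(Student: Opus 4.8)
The plan is to run the same induction on $\ell(w)$ that proves Theorem~\ref{thm:partition_atom_BC}, with Demazure atoms replaced by Demazure characters and the atom functional equations replaced by the character functional equations~\eqref{eq:demauze_functional_A}, \eqref{eq:demauze_functional_B}, and~\eqref{eq:demauze_functional_C}. The only genuinely new inputs are Lemmas~\ref{lemma:type_A_relation_character} and~\ref{lemma:type_BC_relation_character}, which supply the lattice-model counterparts of these relations for the modified model $\states^X_{\lambda,w}$; everything else is the bookkeeping already carried out for the atom theorem.

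For the base case $w=1$, I would note that $\Dop_1 = 1$ gives $\zz^\rho D_1(\zz,\lambda) = \zz^{\lambda+\rho}$. On the lattice side, the modified vertices $\tt{a}'_2$ and the modified $K$-matrix entry $\tt{k}'_3$ do not alter the unique admissible state of $\states^X_{\lambda,1}$, since no two distinct colors cross in the ground state; its weight is again $\zz^{\lambda+\rho}$ in both types $B$ and $C$, exactly as in Theorem~\ref{thm:partition_atom_BC}. Hence $Z(\states^X_{\lambda,1};\zz) = \zz^{\lambda+\rho} = \zz^\rho D_1(\zz,\lambda)$.

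For the inductive step I would write $w = s_i w'$ with $\ell(w') = \ell(w)-1$ and assume $Z(\states_{\lambda,w'};\zz) = \zz^\rho D_{w'}(\zz,\lambda)$. The crucial bookkeeping is how the twist $\zz^\rho$ transforms under the reflection. When $i<n$, since the $z_i$- and $z_{i+1}$-exponents of $\rho = (n-1,\dotsc,0)$ differ by exactly $1$, one has $(s_i\zz)^\rho = z_i^{-1} z_{i+1}\,\zz^\rho$, so the induction hypothesis gives $Z(\states_{\lambda,w'};s_i\zz) = z_i^{-1}z_{i+1}\,\zz^\rho D_{w'}(s_i\zz,\lambda)$. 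Substituting this together with the induction hypothesis into Lemma~\ref{lemma:type_A_relation_character} and simplifying the twist factor reduces the identity to $\zz^\rho$ times~\eqref{eq:demauze_functional_A}, and dividing by $z_i-z_{i+1}$ yields $Z(\states_{\lambda,w};\zz) = \zz^\rho D_{w}(\zz,\lambda)$. When $i=n$ the last coordinate of $\rho$ is $0$, so $(s_n\zz)^\rho = \zz^\rho$ and $Z(\states_{\lambda,w'};s_n\zz) = \zz^\rho D_{w'}(s_n\zz,\lambda)$; feeding this into Lemma~\ref{lemma:type_BC_relation_character} and comparing with~\eqref{eq:demauze_functional_C} in type $C$ (resp.~\eqref{eq:demauze_functional_B} in type $B$) again gives $Z(\states_{\lambda,w};\zz) = \zz^\rho D_{w}(\zz,\lambda)$. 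Because both $Z(\states_{\lambda,w};\zz)$ and $\zz^\rho D_w(\zz,\lambda)$ depend only on $w$ and not on a chosen reduced word (the latter by Matsumoto's theorem, since the $\Dop_i$ satisfy the braid relations), the induction is consistent and the theorem follows.

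The substance is entirely in the two functional-equation lemmas, so the main obstacle here is purely the prefactor bookkeeping rather than any new idea. The one place that requires care — and the only place where the type A and type B/C cases truly diverge — is checking that the twist factor $z_i^{-1}z_{i+1}$ converts the prefactors appearing in Lemma~\ref{lemma:type_A_relation_character} into the \emph{asymmetric} prefactors $z_i,z_{i+1}$ of~\eqref{eq:demauze_functional_A} (this is exactly the feature distinguishing the character equation from the symmetric atom equation~\eqref{eq:atom_functional_A}), and that the $z_n^2$ (resp.~$z_n$) prefactor built into Lemma~\ref{lemma:type_BC_relation_character} matches~\eqref{eq:demauze_functional_C} (resp.~\eqref{eq:demauze_functional_B}). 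I expect no further difficulty, since these are precisely the relations the lemmas were engineered to provide.
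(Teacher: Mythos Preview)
Your plan is precisely the paper's: induct on $\ell(w)$ using Lemmas~\ref{lemma:type_A_relation_character} and~\ref{lemma:type_BC_relation_character}, with the paper itself omitting the details as ``similar to the proof of Theorem~\ref{thm:partition_atom_BC}''.  The base case and the $i=n$ step go through exactly as you say.

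There is, however, a genuine snag at the point you singled out as ``the one place that requires care'', and it is worth actually carrying out the check you deferred.  Lemma~\ref{lemma:type_A_relation_character} as printed is verbatim the atom equation~\eqref{eq:typeAfe}.  If you substitute the induction hypothesis together with $(s_i\zz)^{\rho}=z_i^{-1}z_{i+1}\,\zz^{\rho}$ into that equation, the twist factor in $z_i z_{i+1}^{-1}\,Z(\states_{\lambda,w};s_i\zz)$ cancels completely and you obtain
\[
(z_i-z_{i+1})\,D_{s_iw}(\zz,\lambda)=z_{i+1}\bigl(D_w(\zz,\lambda)-D_w(s_i\zz,\lambda)\bigr),
\]
which is the \emph{atom} relation~\eqref{eq:atom_functional_A}, not the character relation~\eqref{eq:demauze_functional_A}; these differ unless $D_w\equiv 0$.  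The resolution is that Lemma~\ref{lemma:type_A_relation_character} as printed is a typo: rerunning the billiards argument with the character-model $R$-matrices (where the two-colour $R^{\Delta}_{\Delta}$ entries are swapped) changes the weight of configuration~2 in step~(1) from $z_{i+1}^{2}$ to $z_i z_{i+1}$, and the resulting functional equation is
\[
(z_i-z_{i+1})\,Z(\states_{\lambda,s_iw};\zz)=z_i\bigl(Z(\states_{\lambda,w};\zz)-Z(\states_{\lambda,w};s_i\zz)\bigr).
\]
Substituting the induction hypothesis into \emph{this} equation does reduce to $\zz^{\rho}$ times~\eqref{eq:demauze_functional_A}, and your argument then goes through.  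So the approach is right, but the prefactor bookkeeping you postponed is exactly where the printed lemma needs to be corrected.
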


\begin{proof}
The proof uses induction and Lemmas~\ref{lemma:type_A_relation_character} and~\ref{lemma:type_BC_relation_character}.
As it is similar to the proof of Theorem~\ref{thm:partition_atom_BC} and a straightforward computation, we omit the details.
\end{proof}

\begin{example}
Let $\lambda = (2, 1)$.
The following are all possible states for $\states_{\lambda, s_2 s_1 s_2}$:
\ifexamples
\newcommand{\cs}{.35} 
\newcommand{\lw}{0.6mm} 
\newcommand{\thescale}{0.35} 
\newcommand{\base}{
  \foreach \y in {1,5} {
    \draw[-] (8,\y) to [out=0, in=0] (8,\y+2);
  }
  \foreach \x in {1,3,5,7}{
    \draw[-] (\x,0) -- (\x,8);
    \foreach \y in {0,2,4,6,8}
      \draw[fill=white] (\x,\y) circle (\cs);
  }
  \foreach \y in {1,3,5,7} {
    \draw[-] (0,\y) -- (8,\y);
    \foreach \x in {0,2,4,6,8}
      \draw[fill=white] (\x,\y) circle (\cs);
  }
}
\begin{align*}
&
\begin{tikzpicture}[scale=\thescale,baseline=40]
  \base
  \draw[-, UQpurple, line width=\lw] (8,7) to[out=0, in=0] (8,5);
  \draw[-, UQpurple, line width=\lw] (8,3) to[out=0, in=0] (8,1);
  \draw[-,blue,line width=\lw] (5,8) -- (5,7) -- (8,7);
  \draw[-,blue,line width=\lw] (8,5) -- (0,5);
  \draw[-,red,line width=\lw] (1,8) -- (1,7) -- (5,7) -- (5,3) -- (8,3);
  \draw[-,red,line width=\lw] (8,1) -- (0,1);
  \draw[fill=red] (1,8) circle (\cs);
  \foreach \x in {2,4}
    \draw[fill=red] (\x,7) circle (\cs);
  \draw[fill=red] (5,6) circle (\cs);
  \draw[fill=red] (5,4) circle (\cs);
  \draw[fill=red] (6,3) circle (\cs);
  \draw[fill=red] (8,3) circle (\cs);
  \foreach \x in {0,2,4,6,8}
    \draw[fill=red] (\x,1) circle (\cs);
  \draw[fill=blue] (5,8) circle (\cs);
  \foreach \x in {6,8}
    \draw[fill=blue] (\x,7) circle (\cs);
  \foreach \x in {0,2,4,6,8}
    \draw[fill=blue] (\x,5) circle (\cs);
\end{tikzpicture}
&&
\begin{tikzpicture}[scale=\thescale,baseline=40]
  \base
  \draw[-, UQpurple, line width=\lw] (8,7) to[out=0, in=0] (8,5);
  \draw[-, UQpurple, line width=\lw] (8,3) to[out=0, in=0] (8,1);
  \draw[-,blue,line width=\lw] (5,8) -- (5,7) -- (8,7);
  \draw[-,blue, line width=\lw] (8,5) -- (0,5);
  \draw[-,red,line width=\lw] (1,8) -- (1,7) -- (3,7) -- (3,3) -- (8,3);
  \draw[-,red, line width=\lw] (8,1) -- (0,1);
  \draw[fill=red] (1,8) circle (\cs);
  \draw[fill=red] (2,7) circle (\cs);
  \foreach \y in {4,6}
    \draw[fill=red] (3,\y) circle (\cs);
  \foreach \x in {4,6,8}
    \draw[fill=red] (\x,3) circle (\cs);
  \foreach \x in {0,2,4,6,8}
    \draw[fill=red] (\x,1) circle (\cs);
  \draw[fill=blue] (5,8) circle (\cs);
  \foreach \x in {6,8}
    \draw[fill=blue] (\x,7) circle (\cs);
  \foreach \x in {0,2,4,6,8}
    \draw[fill=blue] (\x,5) circle (\cs);
\end{tikzpicture}
&&
\begin{tikzpicture}[scale=\thescale,baseline=40]
  \base
  \draw[-, UQpurple, line width=\lw] (8,7) to[out=0, in=0] (8,5);
  \draw[-,blue,line width=\lw] (5,8) -- (5,7) -- (8,7);
  \draw[-,blue,line width=\lw] (8,5) -- (0,5);
  \draw[-,red,line width=\lw] (1,8) -- (1,7) -- (5,7) -- (5,3) -- (7,3) -- (7,1) -- (0,1);
  \draw[fill=red] (1,8) circle (\cs);
  \foreach \x in {2,4}
    \draw[fill=red] (\x,7) circle (\cs);
  \draw[fill=red] (5,6) circle (\cs);
  \draw[fill=red] (5,4) circle (\cs);
  \draw[fill=red] (6,3) circle (\cs);
  \draw[fill=red] (7,2) circle (\cs);
  \foreach \x in {0,2,4,6}
    \draw[fill=red] (\x,1) circle (\cs);
  \draw[fill=blue] (5,8) circle (\cs);
  \foreach \x in {6,8}
    \draw[fill=blue] (\x,7) circle (\cs);
  \foreach \x in {0,2,4,6,8}
    \draw[fill=blue] (\x,5) circle (\cs);
\end{tikzpicture}
\allowdisplaybreaks
\\ &
\begin{tikzpicture}[scale=\thescale,baseline=40]
  \base
  \draw[-, UQpurple, line width=\lw] (8,7) to[out=0, in=0] (8,5);
  \draw[-, UQpurple, line width=\lw] (8,1) to[out=0, in=0] (8,3);
  \draw[-,blue,line width=\lw] (5,8) -- (5,5) -- (0,5);
  \draw[-,red,line width=\lw] (1,8) -- (1,7) -- (8,7);
  \draw[-,red, line width=\lw] (8,5) -- (7,5) -- (7,3) -- (8,3);
  \draw[-,red, line width=\lw] (8,1) -- (0,1);
  \draw[fill=red] (1,8) circle (\cs);
  \foreach \x in {2,4,6,8}
    \draw[fill=red] (\x,7) circle (\cs);
  \draw[fill=red] (8,5) circle (\cs);
  \draw[fill=red] (7,4) circle (\cs);
  \draw[fill=red] (8,3) circle (\cs);
  \foreach \x in {0,2,4,6,8}
    \draw[fill=red] (\x,1) circle (\cs);
  \draw[fill=blue] (5,8) circle (\cs);
  \draw[fill=blue] (5,6) circle (\cs);
  \foreach \x in {0,2,4}
    \draw[fill=blue] (\x,5) circle (\cs);
\end{tikzpicture}
&&
\begin{tikzpicture}[scale=\thescale,baseline=40]
  \base
  \draw[-, UQpurple, line width=\lw] (8,3) to[out=0, in=0] (8,1);
  \draw[-,blue,line width=\lw] (5,8) -- (5,7) -- (7,7) -- (7,5) -- (0,5);
  \draw[-,red,line width=\lw] (1,8) -- (1,7) -- (3,7) -- (3,3) -- (8,3);
  \draw[-,red, line width=\lw] (8,1) -- (0,1);
  \draw[fill=red] (1,8) circle (\cs);
  \draw[fill=red] (2,7) circle (\cs);
  \foreach \y in {4,6}
    \draw[fill=red] (3,\y) circle (\cs);
  \foreach \x in {4,6,8}
    \draw[fill=red] (\x,3) circle (\cs);
  \foreach \x in {0,2,4,6,8}
    \draw[fill=red] (\x,1) circle (\cs);
  \draw[fill=blue] (5,8) circle (\cs);
  \draw[fill=blue] (6,7) circle (\cs);
  \draw[fill=blue] (7,6) circle (\cs);
  \foreach \x in {0,2,4,6}
  \draw[fill=blue] (\x,5) circle (\cs);
\end{tikzpicture}
&&
\begin{tikzpicture}[scale=\thescale,baseline=40]
  \base
  \draw[-, UQpurple, line width=\lw] (8,3) to[out=0, in=0] (8,1);
  \draw[-,blue,line width=\lw] (5,8) -- (5,7) -- (7,7) -- (7,5) -- (0,5);
  \draw[-,red,line width=\lw] (1,8) -- (1,7) -- (5,7) -- (5,3) -- (8,3);
  \draw[-,red, line width=\lw] (8,1) -- (0,1);
  \draw[fill=red] (1,8) circle (\cs);
  \foreach \x in {2,4}
    \draw[fill=red] (\x,7) circle (\cs);
  \foreach \y in {6,4}
    \draw[fill=red] (5,\y) circle (\cs);
  \foreach \x in {6,8}
    \draw[fill=red] (\x,3) circle (\cs);
  \foreach \x in {0,2,4,6,8}
    \draw[fill=red] (\x,1) circle (\cs);
  \draw[fill=blue] (5,8) circle (\cs);
  \draw[fill=blue] (6,7) circle (\cs);
  \draw[fill=blue] (7,6) circle (\cs);
  \foreach \x in {0,2,4,6}
    \draw[fill=blue] (\x,5) circle (\cs);
\end{tikzpicture}
\allowdisplaybreaks
\\ &
\begin{tikzpicture}[scale=\thescale,baseline=40]
  \base
  \draw[-, UQpurple, line width=\lw] (8,7) to[out=0, in=0] (8,5);
  \draw[-,blue,line width=\lw] (5,8) -- (5,7) -- (8,7);
  \draw[-,blue, line width=\lw] (8,5) -- (0,5);
  \draw[-,red,line width=\lw] (1,8) -- (1,7) -- (3,7) -- (3,3) -- (7,3) -- (7,1) -- (0,1);
  \draw[fill=red] (1,8) circle (\cs);
  \draw[fill=red] (2,7) circle (\cs);
  \draw[fill=red] (7,2) circle (\cs);
  \foreach \y in {4,6}
    \draw[fill=red] (3,\y) circle (\cs);
  \foreach \x in {4,6}
    \draw[fill=red] (\x,3) circle (\cs);
  \foreach \x in {0,2,4,6}
    \draw[fill=red] (\x,1) circle (\cs);
  \draw[fill=blue] (5,8) circle (\cs);
  \foreach \x in {6,8}
    \draw[fill=blue] (\x,7) circle (\cs);
  \foreach \x in {0,2,4,6,8}
    \draw[fill=blue] (\x,5) circle (\cs);
\end{tikzpicture}
&&
\begin{tikzpicture}[scale=\thescale,baseline=40]
  \base
  \draw[-, UQpurple, line width=\lw] (8,7) to[out=0, in=0] (8,5);
  \draw[-,blue,line width=\lw] (5,8) -- (5,7) -- (8,7);
  \draw[-,blue, line width=\lw] (8,5) -- (0,5);
  \draw[-,red,line width=\lw] (1,8) -- (1,7) -- (3,7) -- (3,3) -- (5,3) -- (5,1) -- (0,1);
  \draw[fill=red] (1,8) circle (\cs);
  \draw[fill=red] (2,7) circle (\cs);
  \draw[fill=red] (5,2) circle (\cs);
  \foreach \y in {4,6}
    \draw[fill=red] (3,\y) circle (\cs);
  \draw[fill=red] (4,3) circle (\cs);
  \foreach \x in {0,2,4}
    \draw[fill=red] (\x,1) circle (\cs);
  \draw[fill=blue] (5,8) circle (\cs);
  \foreach \x in {6,8}
    \draw[fill=blue] (\x,7) circle (\cs);
  \foreach \x in {0,2,4,6,8}
    \draw[fill=blue] (\x,5) circle (\cs);
\end{tikzpicture}
\allowdisplaybreaks
\\ &
\begin{tikzpicture}[scale=\thescale,baseline=40]
  \base
  \draw[-,blue,line width=\lw] (5,8) -- (5,7) -- (7,7) -- (7,5) -- (0,5);
  \draw[-,red,line width=\lw] (1,8) -- (1,7) -- (3,7) -- (3,3) -- (7,3) -- (7,1) -- (0,1);
  \draw[fill=red] (1,8) circle (\cs);
  \draw[fill=red] (2,7) circle (\cs);
  \draw[fill=red] (7,2) circle (\cs);
  \foreach \y in {4,6}
    \draw[fill=red] (3,\y) circle (\cs);
  \foreach \x in {4,6}
    \draw[fill=red] (\x,3) circle (\cs);
  \foreach \x in {0,2,4,6}
    \draw[fill=red] (\x,1) circle (\cs);
  \draw[fill=blue] (5,8) circle (\cs);
  \draw[fill=blue] (6,7) circle (\cs);
  \draw[fill=blue] (7,6) circle (\cs);
  \foreach \x in {0,2,4,6}
    \draw[fill=blue] (\x,5) circle (\cs);
\end{tikzpicture}
&&
\begin{tikzpicture}[scale=\thescale,baseline=40]
  \base
  \draw[-,blue,line width=\lw] (5,8) -- (5,7) -- (7,7) -- (7,5) -- (0,5);
  \draw[-,red,line width=\lw] (1,8) -- (1,7) -- (3,7) -- (3,3) -- (5,3) -- (5,1) -- (0,1);
  \draw[fill=red] (1,8) circle (\cs);
  \draw[fill=red] (2,7) circle (\cs);
  \draw[fill=red] (5,2) circle (\cs);
  \foreach \y in {4,6}
    \draw[fill=red] (3,\y) circle (\cs);
  \draw[fill=red] (4,3) circle (\cs);
  \foreach \x in {0,2,4}
    \draw[fill=red] (\x,1) circle (\cs);
  \draw[fill=blue] (5,8) circle (\cs);
  \draw[fill=blue] (6,7) circle (\cs);
  \draw[fill=blue] (7,6) circle (\cs);
  \foreach \x in {0,2,4,6}
    \draw[fill=blue] (\x,5) circle (\cs);
\end{tikzpicture}
&&
\begin{tikzpicture}[scale=\thescale,baseline=40]
  \base
  \draw[-,blue,line width=\lw] (5,8) -- (5,7) -- (7,7) -- (7,5) -- (0,5);
  \draw[-,red,line width=\lw] (1,8) -- (1,7) -- (5,7) -- (5,3) -- (7,3) -- (7,1) -- (0,1);
  \draw[fill=red] (1,8) circle (\cs);
  \foreach \x in {2,4}
    \draw[fill=red] (\x,7) circle (\cs);
  \draw[fill=red] (5,6) circle (\cs);
  \draw[fill=red] (5,4) circle (\cs);
  \draw[fill=red] (6,3) circle (\cs);
  \draw[fill=red] (7,2) circle (\cs);
  \foreach \x in {0,2,4,6}
    \draw[fill=red] (\x,1) circle (\cs);
  \draw[fill=blue] (5,8) circle (\cs);
  \draw[fill=blue] (6,7) circle (\cs);
  \draw[fill=blue] (7,6) circle (\cs);
  \foreach \x in {0,2,4,6}
    \draw[fill=blue] (\x,5) circle (\cs);
\end{tikzpicture}
\end{align*}
\fi
Compare against the atoms for
\[
\overline{\states}_{\lambda, 1}
\sqcup
\overline{\states}_{\lambda, s_1}
\sqcup
\overline{\states}_{\lambda, s_2}
\sqcup
\overline{\states}_{\lambda, s_1 s_2}
\sqcup
\overline{\states}_{\lambda, s_2 s_1}
\sqcup
\overline{\states}_{\lambda, s_2 s_1 s_2}
\]
given in Example~\ref{ex:atom_21}.
Note that each inversion that was present in the atom is now either an inversion for $w_0 w$, an $L$-matrix $\tt{a}'_2$, or a $K$-matrix $\tt{k}_3'$.
\end{example}

\begin{theorem}
\label{thm:sum_of_atoms}
We have
\[
    Z(\states_{\lambda, w}; \zz) = \sum_{y \leqslant w} Z(\overline{\states}_{\lambda, y}; \zz).
\]
\end{theorem}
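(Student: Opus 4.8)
The plan is to deduce the identity directly from results already established. Theorem~\ref{thm:partition_demazure_BC} gives $Z(\states_{\lambda,w};\zz)=\zz^{\rho}D_w(\zz,\lambda)$, Theorem~\ref{thm:lskeys} applied to $f=\zz^{\lambda}$ gives $D_w(\zz,\lambda)=\sum_{y\leqslant w}A_y(\zz,\lambda)$, and Theorem~\ref{thm:partition_atom_BC} gives $\zz^{\rho}A_y(\zz,\lambda)=Z(\overline{\states}_{\lambda,y};\zz)$ for each $y$; chaining these three equalities yields the claim at once. This already constitutes a complete proof.

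I would however also develop the more informative \emph{combinatorial} proof, following~\cite[Thm.~3.6]{BSW20}, which produces a weight-preserving bijection
\[
\Phi\colon \states_{\lambda,w}\ \longrightarrow\ \bigsqcup_{y\leqslant w}\overline{\states}_{\lambda,y}
\]
and hence reproves the identity state-by-state (and, read backwards, lets one compute $Z(\states_{\lambda,w};\zz)$ purely combinatorially). The first step is to note that the character model and the atom model differ only at the vertices $\tt{a}_2\leftrightarrow\tt{a}'_2$ and $\tt{k}_3\leftrightarrow\tt{k}'_3$, and that each replacement vertex carries exactly the same Boltzmann weight as the vertex it replaces ($z$ in the $\Gamma$ row, $1$ in the $\Delta$ row, and $1$ for the $K$-matrix entry). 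Therefore any state map built from local vertex-by-vertex replacements is automatically weight-preserving, and the whole problem reduces to the combinatorics of the colored strands.

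The bijection $\Phi$ is then constructed by ``changing how the paths interact'' at the replacement vertices. In an admissible state of $\states_{\lambda,w}$ each $\tt{a}'_2$ (resp.\ $\tt{k}'_3$) vertex is precisely a site where two equal-spin strands are routed in the direction opposite to the one forced in the atom model. I would resolve each such site by uncrossing its strands and tracking the induced permutation of the boundary colors; the structural claim---to be verified exactly as in the type~$A$ argument of~\cite{BSW20}---is that this resolution carries a state on left boundary $ww_0\cc$ to a state of an atom model on left boundary $yw_0\cc$, where $y$ is recorded by the set of resolved sites, and that $y$ ranges over precisely the Bruhat interval $\{y\in W : y\leqslant w\}$ with each atom state hit exactly once. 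This is the state-level refinement of the observation in the Example that every inversion of the atom becomes either an inversion of $w_0 w$, an $\tt{a}'_2$-vertex, or a $\tt{k}'_3$-vertex.

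The hard part will be showing that the index set of resolutions is \emph{exactly} the Bruhat interval below $w$ and that $\Phi$ is onto the disjoint union with no overcounting---in other words, that the uncrossing rule is compatible with the strong Bruhat order. This is the same subword and exchange-condition bookkeeping as in~\cite[Thm.~3.6]{BSW20}; the only genuinely new feature here is the U-turn, which contributes the $\tt{k}'_3$ sites, and these should be absorbed into the identical argument since their weights and local routings mirror the horizontal $\tt{a}'_2$ case. Granting this bookkeeping, $\Phi$ is a weight-preserving bijection and the stated equality of partition functions follows termwise.
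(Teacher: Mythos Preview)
Your first argument is correct and complete: once Theorem~\ref{thm:partition_demazure_BC}, Theorem~\ref{thm:lskeys}, and Theorem~\ref{thm:partition_atom_BC} are all in hand, the identity follows by chaining them exactly as you say. This is a genuinely different route from the paper. The paper does \emph{not} invoke Theorem~\ref{thm:lskeys} here; instead it gives only the combinatorial bijection (your second approach), and then remarks afterward that this yields $D_w(\zz,\lambda)=\sum_{y\leqslant w}A_y(\zz,\lambda)$ as a \emph{corollary}, i.e.\ a lattice-model reproof of the special case of Theorem~\ref{thm:lskeys}. So your shortcut is logically valid but runs the implication in the opposite direction from the one the paper is after; what the paper's approach buys is an independent combinatorial proof of the atom decomposition, while yours buys brevity at the cost of relying on the classical identity.

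Your sketch of the combinatorial bijection is in the right spirit and matches the paper's (very terse) proof, which also appeals to the analogous argument in~\cite{BSW20}. One small correction: the $K$-matrix replacement is not ``$\tt{k}_3\leftrightarrow\tt{k}'_3$ at the same local configuration,'' since $\tt{k}_3$ has spins $(\overline{u},\overline{u})$ while $\tt{k}'_3$ has spins $(u,u)$; rather, the relevant pairing at the U-turn is between $\tt{k}_2$ and $\tt{k}'_3$. The paper phrases the map in the inverse direction (atom $\to$ character): for each pair of colors, the \emph{first} place they touch is made into an inversion vertex ($\tt{a}_2^\dagger$ or $\tt{k}_2$), and every subsequent touch becomes the character-only vertex ($\tt{a}'_2$ or $\tt{k}'_3$). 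Your ``uncrossing'' description is the inverse of this. The Bruhat-interval bookkeeping you flag as the hard part is exactly the content borrowed from~\cite{BSW20}; the paper does not reprove it either.
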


\begin{proof}
We can show this combinatorially by following using the same idea as the proof of~\cite[Thm.~3.9]{BSW20}.
We consider the paths taken by two colors, and replace vertices accordingly.
Indeed, the first time we see two colors touch at a $\tt{a}_2$ or $\tt{a}_2^{\dagger}$ vertex or at a $K$-matrix $\tt{b}_2$, we replace it by a $\tt{a}_2^{\dagger}$ or a $\tt{k}_2$ respectively.
Every subsequent interaction between two colors becomes an $\tt{a}_2'$ or a $\tt{k}_3'$ for the $K$-matrix.
\end{proof}

As an immediate corollary of Theorem~\ref{thm:sum_of_atoms}, we have $D_w(\zz, \lambda) = \sum_{y \leqslant w} A_y(\zz, \lambda)$ (see Theorem~\ref{thm:lskeys}).

\begin{example}
Let $\lambda = (2, 1)$.
We compute all admissible states in $\states_{\lambda, s_1 s_2}$:
\ifexamples
\newcommand{\cs}{.35} 
\newcommand{\lw}{0.6mm} 
\newcommand{\thescale}{0.35} 
\newcommand{\base}{
  \foreach \y in {1,5} {
    \draw[-] (8,\y) to [out=0, in=0] (8,\y+2);
  }
  \foreach \x in {1,3,5,7}{
    \draw[-] (\x,0) -- (\x,8);
    \foreach \y in {0,2,4,6,8}
      \draw[fill=white] (\x,\y) circle (\cs);
  }
  \foreach \y in {1,3,5,7} {
    \draw[-] (0,\y) -- (8,\y);
    \foreach \x in {0,2,4,6,8}
      \draw[fill=white] (\x,\y) circle (\cs);
  }
}
\begin{align*}
&
\begin{tikzpicture}[scale=\thescale,baseline=40]
  \base
  \draw[-, UQpurple, line width=\lw] (8,7) to[out=0, in=0] (8,5);
  \draw[-, UQgold, line width=\lw] (8,3) to[out=0, in=0] (8,1);
  \draw[-,blue,line width=\lw] (5,8) -- (5,7) -- (8,7);
  \draw[-,blue,line width=\lw] (8,5) -- (0,5);
  \draw[-,red,line width=\lw] (1,8) -- (1,7) -- (5,7) -- (5,3) -- (8,3);
  \draw[-,darkred,line width=\lw] (8,1) -- (0,1);
  \draw[fill=red] (1,8) circle (\cs);
  \foreach \x in {2,4}
    \draw[fill=red] (\x,7) circle (\cs);
  \draw[fill=red] (5,6) circle (\cs);
  \draw[fill=red] (5,4) circle (\cs);
  \draw[fill=red] (6,3) circle (\cs);
  \draw[fill=red] (8,3) circle (\cs);
  \foreach \x in {0,2,4,6,8}
    \draw[fill=darkred] (\x,1) circle (\cs);
  \draw[fill=blue] (5,8) circle (\cs);
  \foreach \x in {6,8}
    \draw[fill=blue] (\x,7) circle (\cs);
  \foreach \x in {0,2,4,6,8}
    \draw[fill=blue] (\x,5) circle (\cs);
\end{tikzpicture}
&&
\begin{tikzpicture}[scale=\thescale,baseline=40]
  \base
  \draw[-, UQpurple, line width=\lw] (8,7) to[out=0, in=0] (8,5);
  \draw[-, UQgold, line width=\lw] (8,3) to[out=0, in=0] (8,1);
  \draw[-,blue,line width=\lw] (5,8) -- (5,7) -- (8,7);
  \draw[-,blue, line width=\lw] (8,5) -- (0,5);
  \draw[-,red,line width=\lw] (1,8) -- (1,7) -- (3,7) -- (3,3) -- (8,3);
  \draw[-,darkred, line width=\lw] (8,1) -- (0,1);
  \draw[fill=red] (1,8) circle (\cs);
  \draw[fill=red] (2,7) circle (\cs);
  \foreach \y in {4,6}
    \draw[fill=red] (3,\y) circle (\cs);
  \foreach \x in {4,6,8}
    \draw[fill=red] (\x,3) circle (\cs);
  \foreach \x in {0,2,4,6,8}
    \draw[fill=darkred] (\x,1) circle (\cs);
  \draw[fill=blue] (5,8) circle (\cs);
  \foreach \x in {6,8}
    \draw[fill=blue] (\x,7) circle (\cs);
  \foreach \x in {0,2,4,6,8}
    \draw[fill=blue] (\x,5) circle (\cs);
\end{tikzpicture}
&&
\begin{tikzpicture}[scale=\thescale,baseline=40]
  \base
  \draw[-, UQgold, line width=\lw] (8,7) to[out=0, in=0] (8,5);
  \draw[-,blue,line width=\lw] (5,8) -- (5,5) -- (0,5);
  \draw[-,red,line width=\lw] (1,8) -- (1,7) -- (8,7);
  \draw[-,darkred,line width=\lw] (8,5) -- (5,5) -- (5,3) -- (7,3) -- (7,1) -- (0,1);
  \draw[fill=red] (1,8) circle (\cs);
  \foreach \x in {2,4,6,8}
    \draw[fill=red] (\x,7) circle (\cs);
  \foreach \x in {6,8}
    \draw[fill=darkred] (\x,5) circle (\cs);
  \draw[fill=darkred] (5,4) circle (\cs);
  \draw[fill=darkred] (6,3) circle (\cs);
  \draw[fill=darkred] (7,2) circle (\cs);
  \foreach \x in {0,2,4,6}
    \draw[fill=darkred] (\x,1) circle (\cs);
  \foreach \y in {6,8}
    \draw[fill=blue] (5,\y) circle (\cs);
  \foreach \x in {0,2,4}
    \draw[fill=blue] (\x,5) circle (\cs);
\end{tikzpicture}
\allowdisplaybreaks
\\ &
\begin{tikzpicture}[scale=\thescale,baseline=40]
  \base
  \draw[-, UQpurple, line width=\lw] (8,7) to[out=0, in=0] (8,5);
  \draw[-, UQgold, line width=\lw] (8,1) to[out=0, in=0] (8,3);
  \draw[-,blue,line width=\lw] (5,8) -- (5,5) -- (0,5);
  \draw[-,red,line width=\lw] (1,8) -- (1,7) -- (8,7);
  \draw[-,darkred
, line width=\lw] (8,5) -- (7,5) -- (7,3) -- (8,3);
  \draw[-,darkred, line width=\lw] (8,1) -- (0,1);
  \draw[fill=red] (1,8) circle (\cs);
  \foreach \x in {2,4,6,8}
    \draw[fill=red] (\x,7) circle (\cs);
  \draw[fill=darkred] (8,5) circle (\cs);
  \draw[fill=darkred] (7,4) circle (\cs);
  \draw[fill=darkred] (8,3) circle (\cs);
  \foreach \x in {0,2,4,6,8}
    \draw[fill=darkred] (\x,1) circle (\cs);
  \draw[fill=blue] (5,8) circle (\cs);
  \draw[fill=blue] (5,6) circle (\cs);
  \foreach \x in {0,2,4}
    \draw[fill=blue] (\x,5) circle (\cs);
\end{tikzpicture}
&&
\begin{tikzpicture}[scale=\thescale,baseline=40]
  \base
  \draw[-, UQgold, line width=\lw] (8,3) to[out=0, in=0] (8,1);
  \draw[-,blue,line width=\lw] (5,8) -- (5,7) -- (7,7) -- (7,5) -- (0,5);
  \draw[-,red,line width=\lw] (1,8) -- (1,7) -- (3,7) -- (3,3) -- (8,3);
  \draw[-,darkred, line width=\lw] (8,1) -- (0,1);
  \draw[fill=red] (1,8) circle (\cs);
  \draw[fill=red] (2,7) circle (\cs);
  \foreach \y in {4,6}
    \draw[fill=red] (3,\y) circle (\cs);
  \foreach \x in {4,6,8}
    \draw[fill=red] (\x,3) circle (\cs);
  \foreach \x in {0,2,4,6,8}
    \draw[fill=darkred] (\x,1) circle (\cs);
  \draw[fill=blue] (5,8) circle (\cs);
  \draw[fill=blue] (6,7) circle (\cs);
  \draw[fill=blue] (7,6) circle (\cs);
  \foreach \x in {0,2,4,6}
  \draw[fill=blue] (\x,5) circle (\cs);
\end{tikzpicture}
&&
\begin{tikzpicture}[scale=\thescale,baseline=40]
  \base
  \draw[-, UQgold, line width=\lw] (8,3) to[out=0, in=0] (8,1);
  \draw[-,blue,line width=\lw] (5,8) -- (5,7) -- (7,7) -- (7,5) -- (0,5);
  \draw[-,red,line width=\lw] (1,8) -- (1,7) -- (5,7) -- (5,3) -- (8,3);
  \draw[-,darkred, line width=\lw] (8,1) -- (0,1);
  \draw[fill=red] (1,8) circle (\cs);
  \foreach \x in {2,4}
    \draw[fill=red] (\x,7) circle (\cs);
  \foreach \y in {6,4}
    \draw[fill=red] (5,\y) circle (\cs);
  \foreach \x in {6,8}
    \draw[fill=red] (\x,3) circle (\cs);
  \foreach \x in {0,2,4,6,8}
    \draw[fill=darkred] (\x,1) circle (\cs);
  \draw[fill=blue] (5,8) circle (\cs);
  \draw[fill=blue] (6,7) circle (\cs);
  \draw[fill=blue] (7,6) circle (\cs);
  \foreach \x in {0,2,4,6}
    \draw[fill=blue] (\x,5) circle (\cs);
\end{tikzpicture}
\end{align*}
\fi
\end{example}

\section{Key algorithm and Proctor patterns}
\label{sec:proctor}

In this section, we will use our model to construct a key algorithm on \emph{reverse} King tableaux~\cite{King75,King76} for $G = \Sp_{2n}$ or Sundaram tableaux~\cite{Sundaram90} for $G = \SO_{2n+1}$.
We begin by recalling the weight preserving bijection between states of the uncolored type $C$ model and symplectic Proctor patterns~\cite[Thm.~4.2]{Proctor94} given in~\cite[Ch.~1]{Ivanov12}.
We then give the analogous bijection between the states of the uncolored type $B$ model and odd orthogonal Proctor patters~\cite[Thm.~7.1]{Proctor94}.
Similar to~\cite{BSW20}, the order of our variables is different by $i \leftrightarrow \overline{n + 1 - i}$, which is the reason we naturally work with reverse King tableaux.
As a consequence, these bijections with our model provides a new proof of~\cite[Thm.~4.2, Thm.~7.1]{Proctor94}.

\subsection{Symplectic patterns and King tableaux}

We consider the case for $G = \Sp_{2n}$, which is the Lie group of Cartan type $C_n$.
We note that these patterns were first given by \v{Z}elobenko~\cite{Zel62}.
A \defn{symplectic Proctor pattern} is a pattern of non-negative integers of the form
\[
\begin{array}{cccccccccc}
a_{1,1} && a_{1,2} && a_{1,3} && \cdots && a_{1,n}
\\ & b_{1,1} && b_{1,2} && b_{1,3} && \cdots && b_{1,n}
\\ && a_{2,2} && a_{2,3} && \cdots && a_{2,n}
\\ &&& b_{2,2} && b_{2,3} && \cdots && b_{2,n}
\\ &&&& \ddots && \ddots && \vdots & \vdots
\\ &&&&&& a_{n-1,n-1} && a_{n-1,n}
\\ &&&&&&& b_{n-1,n-1} && b_{n-1,n}
\\ &&&&&&&& a_{n,n}
\\ &&&&&&&&& b_{n,n}
\end{array}
\]
that satisfies the interlacing conditions
\begin{align*}
\min \{a_{i,j}, a_{i+1,j} \} & \geq b_{i,j} \geq \max \{a_{i,j+1}, a_{i+1,j+1} \},
\\
\min \{b_{i-1,j-1}, b_{i,j-1} \} & \geq a_{i,j} \geq \max \{b_{i-1,j}, b_{i,j} \}.
\end{align*}
The weight of a symplectic Proctor pattern $P$ is given by
\[
\wt(P) := \prod_{i=1}^n z_i^{A_i-2B_i+A_{i+1}},
\qquad
\text{ where } A_i = \sum_{j=i}^n a_i \text{ and } B_i = \sum_{j=i}^n b_i.
\]
We consider $A_{n+1} = 0$.
Let $\PP^C_{\lambda}$ denote the set of symplectic Proctor patterns with top row $\lambda$.

A \defn{King tableau}~\cite{King75,King76} is a filling of a Young diagram with entries in the ordered alphabet $1 < \bon < 2 < \btw < \cdots < n < \bn$ such that the rows are weakly increasing and columns are strictly increasing and the smallest entry in row $i$ is $i$.
The weight of a King tableau $T$ is
\[
\wt(T) = \prod_{i=1}^n x_i^{m_i - m_{\ibar}},
\]
where $m_k$ is the number of times $k$ appears in $T$.
Let $\king_{\lambda}$ denote the set of King tableaux of shape $\lambda$.
A \defn{reverse} King tableau is a King tableau with respect to the alphabet in the reverse order or alternatively the entries in rows (resp.\ columns) are weakly (resp.\ strictly) decreasing and the largest entry in row $i$ is $\overline{n + 1 - i}$.

As discussed in~\cite{Proctor94}, there is a natural bijection $\Theta^C \colon \PP^C_{\lambda} \to \king_{\lambda}$ by extending the usual bijection between GT patterns and semistandard tableaux.
Indeed, the partition $\lambda^{(k)}$ of the $k$-th row indicates the subtableau consisting of all of the letters greater than the $k$-th letter in the alphabet.
For instance, if $k = 3$, then we restrict to the letters $\overline{n-1} < n < \bn$.

\begin{proposition}[{\cite[Ch.~1]{Ivanov12}}]
\label{prop:bijection_typeC}
Let $\states^C_{\lambda}$ denote the uncolored model for $G = \Sp_{2n}$.
There exists a weight-preserving bijection
\[
\Psi^C \colon \states^C_{\lambda} \to \PP^C_{\lambda}.
\]
\end{proposition}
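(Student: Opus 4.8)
The plan is to realize $\Psi^C$ as the standard ``read off the path positions line by line'' correspondence, adapted to the U-turn geometry, and then to check separately that the local admissibility rules are exactly the Proctor interlacing inequalities and that the Boltzmann weight of a state reproduces $\wt$. First I would record, for a fixed admissible state, the set of columns whose vertical edge immediately \emph{above} a given horizontal line carries a path. Reading the $\Gamma$ line $2i-1$ in this way produces the tuple $(a_{i,i}, \dots, a_{i,n})$ and reading the $\Delta$ line $2i$ produces $(b_{i,i}, \dots, b_{i,n})$, after subtracting the fixed staircase $\rho$ so that a strictly decreasing sequence of occupied columns becomes a weakly decreasing tuple of nonnegative integers. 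The top boundary, supported on the columns $\lambda + \rho$, then forces the top row to be $a_1 = \lambda$ (matching that $\PP^C_{\lambda}$ has top row $\lambda$), while the all-$0$ bottom boundary and the U-turns force the pattern to terminate correctly. This fixes the candidate map $\Psi^C$.

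Next I would verify the two families of interlacing inequalities by a finite inspection of the admissible vertices. In the uncolored model only the $0$/path specializations of the vertices in Figures~\ref{fig:colored_gamma_weights} and~\ref{fig:colored_delta_weights} survive. The admissible vertices strung along a single horizontal line constrain the occupied-column sets of the rows immediately above and below it, and a routine propagation argument shows these constraints are equivalent to $\min\{a_{i,j}, a_{i+1,j}\} \ge b_{i,j} \ge \max\{a_{i,j+1}, a_{i+1,j+1}\}$ across a $\Delta$ line and to the dual inequalities $\min\{b_{i-1,j-1}, b_{i,j-1}\} \ge a_{i,j} \ge \max\{b_{i-1,j}, b_{i,j}\}$ across a $\Gamma$ line. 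The role of the U-turn, whose allowed configurations are the $K$-matrix entries $\tt{k}_1, \tt{k}_2, \tt{k}_3$ of Figure~\ref{fig:colored_K_weights_BC}, is to relate the rightmost entries $a_{i,i}$ and $b_{i,i}$ and to enforce nonnegativity; this is the step I expect to require the most care, since it is where the type $C$ model departs from the type $A$ Gelfand--Tsetlin picture and where the $0 \leftrightarrow 1$ semidual convention of Remark~\ref{rem:uncolored_semidual} must be tracked honestly.

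I would then establish weight preservation. The $\Gamma$ line $2i-1$ carries parameter $z_i$, the $\Delta$ line $2i$ carries $z_i^{-1}$, and the U-turn carries $z_i$ with $\tt{k}_1$-weight $z_i^{-2}$ in type $C$. Counting on each of these three pieces the vertices whose Boltzmann weight is a nontrivial power of the parameter, and expressing these counts through the partial sums of the recorded rows, the total power of $z_i$ telescopes to $A_i - 2B_i + A_{i+1}$, which is precisely the exponent appearing in $\wt(P)$. The delicate accounting here is that the factor $z_i^{-1}$ on the $\Delta$ line together with the $z_i^{-2}$ U-turn weight is exactly what produces the coefficient $-2$ in front of $B_i$, so I would isolate the contribution of each individual path segment to make this cancellation transparent.

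Finally, to see that $\Psi^C$ is a genuine bijection rather than merely weight preserving, I would construct its inverse directly. Given a symplectic Proctor pattern, the interlacing inequalities guarantee that at each horizontal line there is a \emph{unique} admissible routing of paths consistent with the occupied-column sets prescribed by the two adjacent pattern rows, and the $K$-matrix rule uniquely fixes each turnaround. Uniqueness at every line assembles into a single admissible state mapping back to the given pattern, so $\Psi^C$ and this construction are mutually inverse; in particular every state is admissible, recovering Proctor's theorem. Since the entire argument is local and the odd orthogonal case differs only in the $\tt{k}_1$ weight (Figure~\ref{fig:colored_K_weights_BC}, right), I expect the same scheme to yield the announced type $B$ bijection with the odd orthogonal Proctor patterns with only the weight bookkeeping at the U-turn altered.
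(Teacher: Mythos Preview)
Your proposal is correct and follows essentially the same construction the paper describes. In fact, the paper does not give a proof of this proposition at all: it cites Ivanov's thesis and, in the paragraph immediately following the statement, simply describes $\Psi^C$ as the extension of the usual GT-pattern/five-vertex-model bijection, reading the $i$-th row of vertical edges as the $01$-sequence of the $i$-th Proctor-pattern row. Your outline---record occupied columns row by row, verify interlacing from the admissible vertex types, match the Boltzmann weight to $\wt(P)$, and build the inverse locally---is exactly a fleshed-out version of that description.

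One small slip worth fixing: the U-turn sits at the \emph{right} end of the lattice (the column-$0$ side), so it governs the \emph{smallest} pattern entries (the rightmost ones, $b_{i,n}$, and the drop in the number of paths between the $a_i$-layer and the $a_{i+1}$-layer), not the largest entries $a_{i,i}, b_{i,i}$ as you wrote. Concretely, a path that wraps through the U-turn rather than crossing a vertical edge between rows $2i-1$ and $2i$ is what accounts for a zero (or small) part in the $b_i$-row; the nonnegativity and the rightmost interlacing constraint are enforced by the $K$-matrix entries together with the column-$0$ vertex. This does not affect the structure of your argument, only the bookkeeping in that one paragraph.
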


Ivanov constructed the bijection $\Psi^C$ in Proposition~\ref{prop:bijection_typeC} explicitly by extending the usual bijection between the five-vertex model and Gelfand--Tsetlin (GT) patterns (see, \textit{e.g.},~\cite[Sec.~2.1]{BSW20}) and using the $1$ edges between the ${}^{\Delta}_{\Gamma}$ (resp.~${}^{\Gamma}_{\Delta}$) rows to define the $\{a_{ij}\}_{i,j}$ (resp.~$\{b_{ij}\}_{i,j}$) values by the GT pattern bijection.
More precisely, the $i$-th row of vertical edges in the model is the $01$-sequence of the partition in the $i$-th row of the symplectic Proctor pattern read from right-to-left.
We can make the analogous injection on the colored model $\overline{\states}_{\lambda, w}$ or $\states_{\lambda, w}$ by considering the positions of the colored vertical edges or equivalently by forgetting about the colors in our model as an intermediate step.
Note that this is unaffected by the difference between the uncolored version of our lattice model and that in~\cite{Ivanov12} (see Remark~\ref{rem:uncolored_semidual}).

\begin{example}
\label{ex:patterns_typeC}

Consider the states given in Example~\ref{ex:atom_21} for $\overline{\states}_{\lambda, w}$.
Then under the bijection $\Psi^C$, the states correspond to the following symplectic Proctor patterns:
\begin{align*}
1: & \quad
\begin{array}{cccc}
2 && 1 \\
& 1 && 0 \\
&& 1 \\
&&& 0
\end{array}
\qquad
s_1: \quad
\begin{array}{cccc}
2 && 1 \\
& 2 && 0 \\
&& 2 \\
&&& 0
\end{array}
\qquad
s_2: \quad
\begin{array}{cccc}
2 && 1 \\
& 1 && 0 \\
&& 1 \\
&&& 1
\end{array}
\allowdisplaybreaks \\
s_1 s_2: &  \quad 
\begin{array}{cccc}
2 && 1 \\
& 1 && 0 \\
&& 0 \\
&&& 0
\end{array}
\qquad
\begin{array}{cccc}
2 && 1 \\
& 2 && 1 \\
&& 2 \\
&&& 0
\end{array}
\qquad
\begin{array}{cccc}
2 && 1 \\
& 1 && 1 \\
&& 1 \\
&&& 0
\end{array}
\allowdisplaybreaks \\
s_2 s_1: & \quad 
\begin{array}{cccc}
2 && 1 \\
& 2 && 0 \\
&& 2 \\
&&& 1
\end{array}
\qquad
\begin{array}{cccc}
2 && 1 \\
& 2 && 0 \\
&& 2 \\
&&& 2
\end{array}
\allowdisplaybreaks \\
s_1 s_2 s_1: &  \quad 
\begin{array}{cccc}
2 && 1 \\
& 2 && 0 \\
&& 0 \\
&&& 0
\end{array}
\qquad
\begin{array}{cccc}
2 && 1 \\
& 2 && 0 \\
&& 1 \\
&&& 0
\end{array}
\qquad
\begin{array}{cccc}
2 && 1 \\
& 2 && 0 \\
&& 1 \\
&&& 1
\end{array}
\qquad
\begin{array}{cccc}
2 && 1 \\
& 2 && 1 \\
&& 1 \\
&&& 0
\end{array}
\allowdisplaybreaks \\
s_2 s_1 s_2: &  \quad 
\begin{array}{cccc}
2 && 1 \\
& 2 && 1 \\
&& 2 \\
&&& 1
\end{array}
\qquad
\begin{array}{cccc}
2 && 1 \\
& 2 && 1 \\
&& 2 \\
&&& 2
\end{array}
\qquad
\begin{array}{cccc}
2 && 1 \\
& 1 && 1 \\
&& 1 \\
&&& 1
\end{array}
\allowdisplaybreaks \\
w_0: &  \quad
\begin{array}{cccc}
2 && 1 \\
& 2 && 1 \\
&& 1 \\
&&& 1
\end{array}
\end{align*}
\end{example}

\subsection{odd orthogonal patterns and Sundaram tableaux}

Here we instead assume $G = \SO_{2n+1}$, which is the Lie group of Cartan type $B_n$.
An \defn{odd orthogonal Proctor pattern} is a symplectic Proctor pattern such that the values $b_{i,n}$, for all $1 \leq i \leq n$, at the right ends are also allowed to be half integers.
We remark that these patterns were first announced by Gelfand and Tsetlin without proof in~\cite{GT50}.
Let $\PP^B_{\lambda}$ denote the set of odd orthogonal Proctor patterns with top row $\lambda$.

A Sundaram tableau~\cite{Sundaram90} is a King tableau with an additional letter $\infty > \bn$ that is allowed to repeat down columns but can only appear once in a row.
The weight of a Sundaram tableau is the same as for a King tableau; in particular, we ignore $\infty$ in the weight computation.
We denote the set of Sundaram tableau of shape $\lambda$ by $\sundaram_{\lambda}$.
A reverse Sundaram tableau is defined analogously to a reverse King tableau.
Likewise, we have a natural bijection $\Theta^B \colon \PP^B_{\lambda} \to \sundaram_{\lambda}$, as noted in~\cite{Proctor94}, by the same description as $\Theta^C$ except if the rightmost entry in the odd orthogonal Proctor pattern is a half integer, we replace the leftmost entry in the corresponding row with an $\infty$.

Recall that the model for both type $B$ and $C$ Demazure atoms (as well as for Demazure characters) have the same states, but the $\tt{k}_1$ entry of the $K$-matrix has a binomial weight $z^{-2} + z^{-1}$ in type $B$ as opposed to the monomial weight $z^{-2}$ for type $C$.
Thus, following~\cite{BSW20}, we can introduce a marking to the states for this $K$-matrix entry, where if the bend is marked, then it has a Boltzmann weight of $z^{-1}$ and otherwise the Boltzmann weight is  $z^{-2}$.
This yields a bijection between the marked states and the monomials of the partition function as opposed to a product of binomials.

\begin{proposition}
\label{prop:bijection_typeB}
Let $\widehat{\states}_{\lambda}$ denote the set of marked states for the uncolored type $B$ model.
There exists a weight-preserving bijection
\[
\Psi^B \colon \widehat{\states}_{\lambda} \to \PP^B_{\lambda}.
\]
\end{proposition}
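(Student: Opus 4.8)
The plan is to bootstrap off the type $C$ bijection $\Psi^C$ of Proposition~\ref{prop:bijection_typeC}, since the underlying states of the uncolored type $B$ and type $C$ models are literally the same: the two models differ only in the Boltzmann weight of the $\tt{k}_1$ entry of the $K$-matrix, which is the binomial $z^{-2}+z^{-1}$ in type $B$ versus the monomial $z^{-2}$ in type $C$. Forgetting the markings, $\Psi^C$ therefore already assigns to the underlying state of each element of $\widehat{\states}_\lambda$ a symplectic Proctor pattern, and a symplectic Proctor pattern is precisely an odd orthogonal Proctor pattern all of whose rightmost entries $b_{i,n}$ are integers. Thus the whole problem reduces to matching the extra binary datum carried by a marked state---a choice, for each $\tt{k}_1$ U-turn, of marked or unmarked---with the extra freedom in $\PP^B_\lambda$ of letting each $b_{i,n}$ be a half-integer.

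First I would prove the key combinatorial lemma: in any state, the U-turn joining rows $2i-1$ and $2i$ is a $\tt{k}_1$ entry (equivalently, no path turns around there) if and only if the entry $b_{i,n}$ of the associated symplectic pattern $\Psi^C(\text{state})$ satisfies $b_{i,n}\geq 1$, while a path turns around at that U-turn exactly when $b_{i,n}=0$. This is visible in Example~\ref{ex:patterns_typeC}: for $w_0$ both U-turns are empty and both $b_{i,n}=1$, whereas for $w=1$ both U-turns carry paths and both $b_{i,n}=0$. In general it follows from Ivanov's right-to-left reading of the $01$-sequences at the boundary column, together with the $0\leftrightarrow1$ semidualization of Remark~\ref{rem:uncolored_semidual}.

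With the lemma in hand I would define $\Psi^B$ as follows. Given a marked state $s$, set $P=\Psi^C(s)$, and for each $\tt{k}_1$ U-turn at row pair $i$ that is marked, replace $b_{i,n}$ by $b_{i,n}-\tfrac12$. By the lemma a marked U-turn has $b_{i,n}\geq 1$, so $b_{i,n}-\tfrac12\geq\tfrac12\geq 0$ respects the nonnegativity bound, while the upper bound $b_{i,n}\leq\min\{a_{i,n},a_{i+1,n}\}$ is only relaxed; hence $\Psi^B(s)\in\PP^B_\lambda$. Weight-preservation is then immediate: marking the $i$-th U-turn multiplies the Boltzmann weight by $z_i$ (from $z_i^{-2}$ to $z_i^{-1}$), and in $\wt(P)=\prod_i z_i^{A_i-2B_i+A_{i+1}}$ the substitution $b_{i,n}\mapsto b_{i,n}-\tfrac12$ decreases $B_i$ by $\tfrac12$ and so multiplies $\wt(P)$ by $z_i$; the unmarked contributions are handled by $\Psi^C$.

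Finally I would exhibit the inverse and thereby conclude bijectivity. Given $Q\in\PP^B_\lambda$, replace each half-integer entry $b_{i,n}$ by $b_{i,n}+\tfrac12=\lceil b_{i,n}\rceil$ to obtain an integer symplectic pattern, apply $(\Psi^C)^{-1}$ to recover a state, and mark the U-turn at row pair $i$ precisely when $b_{i,n}$ was a half-integer. A half-integer entry is $\geq\tfrac12$, so its ceiling is $\geq 1$, and by the lemma the corresponding U-turn is indeed a $\tt{k}_1$ entry, making the marking legitimate; this map is manifestly inverse to $\Psi^B$. The main obstacle is the key lemma of the second paragraph: the construction and weight-matching are bookkeeping, but pinning down exactly how the emptiness of a U-turn is controlled by the rightmost pattern entry $b_{i,n}$ requires carefully unwinding Ivanov's boundary conventions for the extension of the five-vertex/Gelfand--Tsetlin correspondence.
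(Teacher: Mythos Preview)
Your proposal is correct and follows exactly the paper's approach: the paper's proof is the single sentence ``extend $\Psi^C$ by replacing $b_{i,n}$ with $b_{i,n}-\tfrac12$ if the $K$-matrix entry $\tt{k}_1$ is marked,'' and your construction is identical. Your lemma that the $i$-th U-turn is $\tt{k}_1$ if and only if $b_{i,n}\geq 1$, together with the resulting checks of well-definedness, weight-preservation, and bijectivity, supplies details the paper leaves implicit.
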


\begin{proof}
We extend the bijection $\Psi^C$ given above to the desired bijection by replacing $b_{i,n}$ with $b_{i,n} - \frac{1}{2}$ if the $K$-matrix entry $\tt{k}_1$ is marked.
\end{proof}

Similar to the case when $X = C$, we can extend $\Psi^B$ to an injection with the domain the colored model $\overline{\states}_{\lambda, w}^B$ or $\states^B_{\lambda, w}$.

\begin{example}
Let us take $w = s_2$. Then we have the one state in the model $\overline{\states}^B_{\lambda, w}$ with one $\tt{k}_1$ U-turn that we can mark, which corresponds to the following pair of odd orthogonal Proctor patterns and reverse Sundaram tableaux:
\[
\newcommand{\cs}{.35} 
\newcommand{\lw}{0.6mm} 
\newcommand{\thescale}{0.35} 
\begin{tikzpicture}[scale=\thescale,baseline=40]
  \foreach \y in {1,5} {
    \draw[-] (8,\y) to [out=0, in=0] (8,\y+2);
  }
  \foreach \x in {1,3,5,7}{
    \draw[-] (\x,0) -- (\x,8);
    \foreach \y in {0,2,4,6,8}
      \draw[fill=white] (\x,\y) circle (\cs);
  }
  \foreach \y in {1,3,5,7} {
    \draw[-] (0,\y) -- (8,\y);
    \foreach \x in {0,2,4,6,8}
      \draw[fill=white] (\x,\y) circle (\cs);
  }
  \draw[-, UQgold, line width=\lw] (8,7) to[out=0, in=0] (8,5);
  \draw[-,blue,line width=\lw] (5,8) -- (5,3) -- (7,3) -- (7,1) -- (0,1);
  \draw[-,red,line width=\lw] (1,8) -- (1,7) -- (8,7);
  \draw[-,darkred, line width=\lw] (8,5) -- (0,5);
  \draw[fill=red] (1,8) circle (\cs);
  \foreach \x in {2,4,6,8}
    \draw[fill=red] (\x,7) circle (\cs);
  \foreach \x in {0,2,4,6,8}
  \draw[fill=darkred] (\x,5) circle (\cs);
  \foreach \y in {8,6,4}
    \draw[fill=blue] (5,\y) circle (\cs);
  \draw[fill=blue] (6,3) circle (\cs);
  \draw[fill=blue] (7,2) circle (\cs);
  \foreach \x in {0,2,4,6}
    \draw[fill=blue] (\x,1) circle (\cs);
  \draw[color=UQpurple] (8.6+0.2,2+0.2) rectangle (8.6-0.2,2-0.2);
\end{tikzpicture}
\quad \longleftrightarrow \quad
\begin{array}{cccc}
2 && 1 \\
& 1 && 0 \\
&& 1 \\
&&& 1
\end{array},
\quad
\begin{array}{cccc}
2 && 1 \\
& 1 && 0 \\
&& 1 \\
&&& \frac{1}{2}
\end{array}
\quad\longleftrightarrow\quad
\ytableaushort{{\btw}1,1}\,,
\quad
\ytableaushort{{\infty}1,1}\,,
\]
where the box in the unique state of $\overline{\states}^B_{s_2,\lambda}$ denotes the possible marking.
\end{example}

\subsection{Key algorithm}

Using the bijections $\Psi^X$, we now give a simple algorithm for computing the (right) key $w$ of a reverse King (resp.\ Sundaram) tableau $T$ for $X = C$ (resp.\ $X = B$) using the bijections from~\cite{Proctor94}.

Let $\mcT$ denote a set of tableaux such that $\chi_{\lambda}(\zz) = \sum_{T \in \mcT} \xx^{\wt(T)}$.
The \defn{(right) key} of $\mcT$ is a map $\key \colon T \to W$ such that
\[
\mcT = \bigsqcup_{w \in W} \{T \in \mcT \mid \key(T) = w\}
\qquad \text{ and } \qquad
A_w(\zz, \lambda) = \sum_{\substack{T \in \mcT \\ \key(T) = w}} \zz^{\wt(T)}.
\]

We consider the case when $\mcT$ is the set of \emph{reverse} King (resp.\ Sundaram) tableaux of shape $\lambda$ for $X = C$ (resp.~$X = B$).
Consider some $T \in \mcT$, which we first convert to the corresponding type of Proctor pattern and then to a state in the uncolored lattice model.
That is, the state that we have is $S = \bigl( (\Psi^X)^{-1} \circ (\Theta^X)^{-1} \bigr)(T)$.
From Theorem~\ref{thm:sum_of_atoms}, Theorem~\ref{thm:partition_atom_BC}, and the fact that $D_{w_0}(\zz, \lambda) = \chi_{\lambda}(\zz)$, we know there is a unique way to color the corresponding state $S$.
This coloring gives us a signed permutation $w$ by reading the left boundary of the colored state, which then defines $\key(T) = w$.

\begin{theorem}
Let $\lambda$ be a partition.
Let $X = C$ (resp.~$X = B$), and let $\mcT$ denote the set of reverse King (resp.\ Sundaram) tableaux of shape $\lambda$.
The map $\key \colon \mcT \to W$ defined above is a right key map.
\end{theorem}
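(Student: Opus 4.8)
The plan is to prove that the \emph{color-forgetting map}
\[
\pi \colon \bigsqcup_{w \in W} \overline{\states}^X_{\lambda, w} \longrightarrow \states^X_{\lambda},
\]
from the disjoint union of the colored atom models onto the uncolored model, is a weight-preserving bijection. Granting this, the two defining properties of a right key map follow by transporting everything through the bijections $\Psi^X$ (Proposition~\ref{prop:bijection_typeC} and Proposition~\ref{prop:bijection_typeB}) and $\Theta^X$. Indeed, each $T \in \mcT$ determines the uncolored state $S = (\Psi^X)^{-1}\bigl((\Theta^X)^{-1}(T)\bigr)$; if $\pi$ is bijective then $S$ has exactly one colored preimage, that preimage lies in a single $\overline{\states}^X_{\lambda, w}$, and reading its left $\Delta$ boundary (which is $w w_0 \cc$) recovers $w$. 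This makes $\key(T) = w$ well defined and yields $\mcT = \bigsqcup_{w} \{T : \key(T) = w\}$ immediately.

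First I would record that $\pi$ is weight-preserving: comparing the colored tables in Figures~\ref{fig:colored_gamma_weights} and~\ref{fig:colored_delta_weights} with their uncolored specializations shows that forgetting colors leaves the weight of every vertex and every U-turn unchanged. I would then prove \emph{injectivity} of $\pi$ by a deterministic tracing argument: all colored states share the same fixed top boundary $c_n, \dotsc, c_1$ in positions $\lambda+\rho$, and given the occupied edges (that is, the uncolored state $S$) together with the colors already placed on the incoming edges of a vertex, the colored $L$- and $K$-matrix tables admit a unique admissible completion, the choice among $\tt{a}_2$, $\tt{a}_2^{\dagger}$, $\tt{a}_2^{\circ}$ being dictated by the relative order of the two colors present. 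Propagating from the top boundary therefore assigns every color uniquely, so two colored states projecting to the same $S$ must coincide.

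Surjectivity is the step I expect to be the main obstacle. Here I would invoke Theorem~\ref{thm:sum_of_atoms}, whose combinatorial vertex-replacement bijection supplies colored lifts, together with a counting argument that upgrades injectivity to a bijection. Since $\pi$ is weight-preserving, grouping colored states by their uncolored image gives
\[
\sum_{S \in \states^X_{\lambda}} \abs{\pi^{-1}(S)} \cdot \operatorname{wt}_{\mathrm{B}}(S) = \sum_{w \in W} Z(\overline{\states}^X_{\lambda, w}; \zz) = \zz^{\rho} \sum_{w \in W} A_w(\zz, \lambda) = \zz^{\rho} \chi_{\lambda}(\zz),
\]
using Theorem~\ref{thm:partition_atom_BC} and Corollary~\ref{cor:character_atom_decomp}, where $\operatorname{wt}_{\mathrm{B}}$ denotes the Boltzmann weight. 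On the other hand, $\Psi^X$ identifies $\states^X_{\lambda}$ with $\PP^X_{\lambda}$, and Proctor's character formula (reproved by these bijections) gives $\sum_{S} \operatorname{wt}_{\mathrm{B}}(S) = \zz^{\rho}\chi_\lambda(\zz)$ as well, the global $\zz^{\rho}$ coming uniformly from the $\lambda+\rho$ top boundary. Subtracting yields $\sum_{S}\bigl(\abs{\pi^{-1}(S)} - 1\bigr)\operatorname{wt}_{\mathrm{B}}(S) = 0$; by injectivity each multiplicity is $0$ or $1$, and since these are Laurent monomials with nonnegative coefficients (no cancellation), every multiplicity equals $1$. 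Hence $\pi$ is a bijection.

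Finally, restricting $\pi$ to a single $w$ gives a weight-preserving bijection between the colored states of $\overline{\states}^X_{\lambda, w}$ and the fiber $\{T \in \mcT : \key(T) = w\}$, via $T \mapsto (\Psi^X)^{-1}(\Theta^X)^{-1}(T) \mapsto \pi^{-1}(\cdot)$. Therefore
\[
\sum_{\substack{T \in \mcT \\ \key(T) = w}} \zz^{\wt(T)} = \zz^{-\rho}\, Z(\overline{\states}^X_{\lambda, w}; \zz) = A_w(\zz, \lambda)
\]
by Theorem~\ref{thm:partition_atom_BC}, where the factor $\zz^{-\rho}$ reconciles the Boltzmann weight with the tableau weight $\wt(T) = \prod_i x_i^{m_i - m_{\ibar}}$ under the $i \leftrightarrow \overline{n+1-i}$ reversal that forces the \emph{reverse} King (resp.\ Sundaram) tableaux. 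Summing over $w$ recovers $\chi_\lambda(\zz) = \sum_{T \in \mcT} \zz^{\wt(T)}$, so both defining conditions of a right key map hold. The delicate points to execute carefully are the verification of the deterministic coloring rule at crossing vertices and the precise matching of the $\zz^{\rho}$-twisted weights with $\wt(T)$ under the variable reversal.
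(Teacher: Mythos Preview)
Your argument is correct and is precisely the reasoning the paper compresses into the single sentence preceding the theorem (citing Theorem~\ref{thm:sum_of_atoms}, Theorem~\ref{thm:partition_atom_BC}, and $D_{w_0}=\chi_\lambda$): each uncolored state admits exactly one atom-model coloring, and then Theorem~\ref{thm:partition_atom_BC} identifies the fiber sums with the $A_w$. One remark: your deterministic tracing already yields surjectivity as well as injectivity---for every uncolored local configuration and every assignment of colors to the incoming edges there is always exactly one admissible colored completion in Figures~\ref{fig:colored_gamma_weights}--\ref{fig:colored_K_weights_BC}, and the resulting left $\Delta$ boundary automatically encodes a signed permutation---so the partition-function counting is redundant (though still valid; in type $B$ replace ``Laurent monomials'' by ``Laurent polynomials with nonnegative coefficients,'' since $\tt{k}_1$ contributes a binomial, and the no-cancellation step goes through unchanged).
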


For the remainder of this section, we assume $X = C$ unless stated otherwise.
We give a conjecture that would justify that the key map given above is natural.
We first need some notation from crystal theory.
For more details on crystals, we refer the reader to~\cite{BS17}.
A highest weight crystal $B(\lambda)$ is a certain edge-colored weighted connected digraph that satisfies certain additional properties and encodes the action of the corresponding quantum group of $G$.
In particular, we have operators $e_i$ defined by $e_i(b) = b'$ for every edge $b' \xrightarrow[\hspace{20pt}]{i} b$.
Let $u_{\lambda}$ denote the highest weight element of $B(\lambda)$, which is the unique source in the digraph.

Fix some $w \in W$, and chose some reduced expression $w = s_{i_1} s_{i_2} \dotsm s_{i_{\ell}}$.
A \defn{Demazure crystal} is the crystal induced from $B(\lambda)$ by the set of vertices
\[
B_w(\lambda) := \{ b \in B(\lambda) \mid e_{i_{\ell}}^{a_{\ell}} \cdots e_{i_2}^{a_2} e_{i_1}^{a_1} b = u_{\lambda} \text{ for some } a_1, \dotsc, a_{\ell} \in \ZZ_{\geq 0} \}.
\]
This does not depend on the choice of reduced expression for $w$~\cite{K93}.
The \defn{atom crystal} is the crystal induced from $B(\lambda)$ by the set of vertices
\[
\overline{B}_w(\lambda) := B_w(\lambda) \setminus \bigcup_{v < w} B_v(\lambda).
\]
A common model for $B(\lambda)$ for type $C_n$ is the set of Kashiwara--Nakashima (KN) tableaux, which has a natural crystal structure~\cite{KN94}.

The Sheats bijection~\cite{Sheats99} is a weight-preserving bijection between King tableaux and a variant of KN tableaux called DeConcini tableaux~\cite{DeConcini79}.
There is a natural map from KN tableaux to DeConcini tableaux, and so we will abuse terminology slightly and call the composite of these maps the Sheats bijection from KN tableaux to King tableaux.
However, we have the weight twisting by $i \leftrightarrow \overline{n+1-i}$ coming from the bijection from KN tableaux to DeConcini tableaux.
To go between reverse King tableaux and normal King tableaux, we simply replace $i \leftrightarrow \overline{n+1-i}$ entry-wise, where $\overline{\ibar} = i$.
In order to get the alphabets to match, we require interchanging $i \leftrightarrow \ibar$ by using the tableau switching algorithm from~\cite{BSS96}.
This leads to the following conjecture.

\begin{conjecture}
\label{conj:atom_structure}
Let $\mcT_{\lambda}$ denote the set of reverse King tableaux of shape $\lambda$.
There exists a crystal structure on $\mcT_{\lambda}$ such that $\{T \in \mcT_{\lambda} \mid \key(T) = w\}$ equals the corresponding crystal atom $\overline{B}_w(\lambda)$.
Additionally, applying the tableau switching algorithm from $\mcT_{\lambda} \to \king_{\lambda}$ is a crystal isomorphism with the crystal structure on $\king_{\lambda}$ given by Lee~\cite{Lee19}.
Moreover, the composition of
\begin{enumerate}
\item applying the tableau switching algorithm to interchange $i \leftrightarrow \ibar$ for all $i$, \label{map:switch}
\item replacing $i \leftrightarrow \overline{n+1-i}$ entry-wise, and \label{map:reverse}
\item applying the Sheats bijection
\end{enumerate}
is a crystal isomorphism on the corresponding crystal atoms that sends $\key$ to the key map defined in~\cite{JL19,Santos19}.
\end{conjecture}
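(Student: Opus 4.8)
The plan to establish Conjecture~\ref{conj:atom_structure} is to transport the Kashiwara--Nakashima crystal structure on $B(\lambda)$ through the chain of bijections named in the statement, and then to match the transported atom decomposition against the lattice-model key map. First I would build the crystal structure on $\mcT_\lambda$ by starting from the KN model for $B(\lambda)$ with its operators $e_i, f_i$ from~\cite{KN94}, pulling this back along the Sheats bijection~\cite{Sheats99} to King tableaux, applying the entry-wise reversal $i \leftrightarrow \overline{n+1-i}$ of step~\ref{map:reverse}, and finally undoing the alphabet swap via the tableau switching algorithm of~\cite{BSS96} in step~\ref{map:switch}. Each of these maps is weight-preserving up to the controlled twist $i \leftrightarrow \overline{n+1-i}$, so the composite endows $\mcT_\lambda$ with a crystal isomorphic to $B(\lambda)$. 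The first thing to verify is that this transported structure agrees on $\king_\lambda$ with the crystal defined by Lee~\cite{Lee19}; since both are crystal structures on the same underlying set with the same character, it suffices to check that they share a highest-weight source and that $e_i, f_i$ act identically on a generating set, which is a finite local computation once the switching algorithm is written out explicitly.

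Next I would identify the fibers of the lattice-model key with the atom crystals. By the construction of $\key$ together with Theorems~\ref{thm:partition_atom_BC} and~\ref{thm:sum_of_atoms}, the fiber $\{T \in \mcT_\lambda \mid \key(T) = w\}$ carries character $A_w(\zz, \lambda)$ and these fibers partition $\mcT_\lambda$. On the crystal side, the nesting $B_w(\lambda) = \bigsqcup_{y \leqslant w} \overline{B}_y(\lambda)$ together with $\ch B_w(\lambda) = D_w(\zz,\lambda)$ and the recursion $D_w = \sum_{y \leqslant w} A_y$ of Theorem~\ref{thm:lskeys} forces, by Möbius inversion over the Bruhat poset, the identity $\ch \overline{B}_w(\lambda) = A_w(\zz, \lambda)$. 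Equality of characters alone is insufficient, so the crucial step is to show that these two $W$-indexed partitions of $\mcT_\lambda$ coincide \emph{set-theoretically}. I would argue this by induction on $\ell(w)$, proving that the combinatorial key map is compatible with the raising operators $e_i$: concretely, that adjoining a color on the left boundary of the coloring that defines $\key$ corresponds to passing from $B_w(\lambda)$ to $B_{s_i w}(\lambda)$, so that the unique colorability supplied by Theorem~\ref{thm:sum_of_atoms} places $T$ in $\overline{B}_w(\lambda)$ exactly when $\key(T) = w$.

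Finally I would match $\key$ with the representation-theoretic key map of~\cite{JL19,Santos19}. Once the fibers of $\key$ are shown to be the atom crystals $\overline{B}_w(\lambda)$, both maps send a tableau to the index $w$ of the atom crystal containing its image under the composite of steps~\ref{map:switch}--\ref{map:reverse} and the Sheats bijection, so they agree provided the transported crystal structure is the standard KN one; this last compatibility is precisely what the first paragraph supplies, modulo checking that switching and reversal intertwine the key maps on the two sides.

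The main obstacle is the set-theoretic upgrade in the second paragraph: converting the character identity $\ch \overline{B}_w(\lambda) = A_w(\zz,\lambda)$ into an honest bijection of fibers. Because distinct atoms share monomials, one cannot separate the fibers weight by weight, and the argument must instead be structural, tracking the crystal operators through the coloring of states. A secondary difficulty is that the Sheats bijection interacts subtly with the switching algorithm, so establishing that their composite is a genuine crystal morphism---not merely weight-preserving---will require a careful local analysis of how $e_i, f_i$ transform before and after switching, most delicately at the $s_n$ node, where the short root and the half-integer entries of the odd orthogonal Proctor patterns enter in type $B$.
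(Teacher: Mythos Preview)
The statement you are attempting to prove is presented in the paper as a \emph{conjecture}, not a theorem: the paper offers no proof of Conjecture~\ref{conj:atom_structure}. The only supporting material is Example~\ref{ex:evidence}, which verifies the conjecture by hand for $n=2$ and $\lambda=(2,1)$ by tracing the chain of bijections on all sixteen tableaux and checking that the resulting KN tableaux land in the correct crystal atoms. So there is no ``paper's own proof'' to compare against; you are attempting something the authors explicitly leave open.

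As a research plan your outline is reasonable, but it is not a proof, and you correctly identify the essential gap yourself. The step that fails to go through is the set-theoretic identification of $\{T : \key(T)=w\}$ with $\overline{B}_w(\lambda)$. Your proposed induction on $\ell(w)$ would require showing that the crystal raising operators $e_i$, once transported through Sheats, entry-reversal, and tableau switching, interact with the colored lattice model in a way that respects the left boundary coloring---but nothing in the paper establishes any compatibility between crystal operators and the model, and the Sheats bijection is not known to be a crystal morphism in any useful sense. Your claim that matching the transported structure with Lee's crystal on $\king_\lambda$ is ``a finite local computation'' is also unjustified: Lee's operators are defined by a signature rule on King tableaux, and showing that two globally defined crystal structures on an infinite family of shapes agree is not a finite check unless one has a uniform local description on both sides, which is precisely what is missing. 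These are the reasons the authors state the result as a conjecture rather than a theorem.
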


We note that first two maps~(\ref{map:switch}) and~(\ref{map:reverse}) commute.

\begin{example}
\label{ex:evidence}
Consider the states given in Example~\ref{ex:atom_21} and the symplectic Proctor patterns under $\Psi^C$ in Example~\ref{ex:patterns_typeC}.
The corresponding reverse King tableaux with the alphabet $1 < \bon < 2 < \btw$ are
\ytableausetup{boxsize=1.3em}
\begin{align*}
1: &  \quad \ytableaushort{21,1}\,,
\qquad
s_1:  \quad \ytableaushort{22,1}\,,
\qquad
s_2:  \quad \ytableaushort{{\btw}1,1}\,,
\allowdisplaybreaks \\
s_1 s_2: &  \quad \ytableaushort{{\bon}1,1}\,,\ \ytableaushort{22,{\bon}}\,,\ \ytableaushort{2{\bon},1}\,,
\qquad
s_2 s_1:  \quad \ytableaushort{{\btw}2,1}\,,\ \ytableaushort{{\btw}{\btw},1}\,,
\allowdisplaybreaks \\
s_1 s_2 s_1: &  \quad \ytableaushort{{\bon}{\bon},1}\,,\ \ytableaushort{2{\bon},1}\,,\ \ytableaushort{{\btw}{\bon},1}\,,\ \ytableaushort{2{\bon},{\bon}}\,,
\qquad
s_2 s_1 s_2: \quad \ytableaushort{{\btw}2,{\bon}}\,,\ \ytableaushort{{\btw}{\btw},{\bon}}\,,\ \ytableaushort{2{\bon},1}\,,
\allowdisplaybreaks \\
w_0: &  \quad \ytableaushort{{\btw}{\bon},{\bon}}\,,
\end{align*}
Next, we change the alphabet by $i \leftrightarrow \ibar$ using the tableau switching algorithm, so our new alphabet is $\bon < 1 < \btw < 2$, to yield
\begin{align*}
1: &  \quad \ytableaushort{21,1}\,,
\qquad
s_1:  \quad \ytableaushort{22,1}\,,
\qquad
s_2:  \quad \ytableaushort{{\btw}1,1}\,,
\allowdisplaybreaks \\
s_1 s_2: &  \quad \ytableaushort{11,{\bon}}\,,\ \ytableaushort{22,{\bon}}\,,\ \ytableaushort{2{\bon},1}\,,
\qquad
s_2 s_1:  \quad \ytableaushort{2{\btw},1}\,,\ \ytableaushort{{\btw}{\btw},1}\,,
\allowdisplaybreaks \\
s_1 s_2 s_1: &  \quad \ytableaushort{1{\bon},{\bon}}\,,\ \ytableaushort{2{\bon},1}\,,\ \ytableaushort{{\btw}{\bon},1}\,,\ \ytableaushort{2{\bon},{\bon}}\,,
\qquad
s_2 s_1 s_2: \quad \ytableaushort{2{\btw},{\bon}}\,,\ \ytableaushort{{\btw}{\btw},{\bon}}\,,\ \ytableaushort{2{\bon},1}\,,
\allowdisplaybreaks \\
w_0: &  \quad \ytableaushort{{\btw}{\bon},{\bon}}\,.
\end{align*}
We then apply the map that sends $i \leftrightarrow \overline{n+1-i}$ entry-wise to obtain
\begin{align*}
1: &  \quad \ytableaushort{{\bon}{\btw},{\btw}}\,,
\qquad
s_1:  \quad \ytableaushort{{\bon}{\bon},{\btw}}\,,
\qquad
s_2:  \quad \ytableaushort{1{\btw},{\btw}}\,,
\allowdisplaybreaks \\
s_1 s_2: &  \quad \ytableaushort{{\btw}{\btw},2}\,,\ \ytableaushort{{\bon}{\bon},2}\,,\ \ytableaushort{{\bon}{\btw},2}\,,
\qquad
s_2 s_1:  \quad \ytableaushort{{\bon}1,{\btw}}\,,\ \ytableaushort{11,{\btw}}\,,
\allowdisplaybreaks \\
s_1 s_2 s_1: &  \quad \ytableaushort{{\btw}2,2}\,,\ \ytableaushort{{\bon}2,{\btw}}\,,\ \ytableaushort{12,{\btw}}\,,\ \ytableaushort{{\bon}2,2}\,,
\qquad
s_2 s_1 s_2:  \quad \ytableaushort{{\bon}1,2}\,,\ \ytableaushort{11,2}\,,\ \ytableaushort{1{\btw},2}\,,
\allowdisplaybreaks \\
w_0: &  \quad \ytableaushort{12,2}\,,
\end{align*}
Finally, we use the Sheats bijection to obtain the KN tableaux (with the alphabet now $1 < 2 < \btw < \bon$)
\begin{align*}
1: &  \quad \ytableaushort{11,2}\,,
\qquad
s_1:  \quad \ytableaushort{12,2}\,,
\qquad
s_2:  \quad \ytableaushort{11,{\btw}}\,,
\allowdisplaybreaks \\
s_1 s_2: &  \quad \ytableaushort{12,{\btw}}\,,\ \ytableaushort{22,{\bon}}\,,\ \ytableaushort{22,{\btw}}\,,
\qquad
s_2 s_1:  \quad \ytableaushort{1{\btw},2}\,,\ \ytableaushort{1{\btw},{\btw}}\,,
\allowdisplaybreaks \\
s_1 s_2 s_1: &  \quad \ytableaushort{2{\bon},{\btw}}\,,\ \ytableaushort{1{\bon},2}\,,\ \ytableaushort{1{\bon},{\btw}}\,,\ \ytableaushort{2{\bon},{\bon}}\,,
\qquad
s_2 s_1 s_2:  \quad \ytableaushort{2{\btw},{\bon}}\,,\ \ytableaushort{{\btw}{\btw},{\bon}}\,,\ \ytableaushort{2{\btw},{\btw}}\,,
\allowdisplaybreaks \\
w_0: &  \quad \ytableaushort{{\btw}{\bon},{\bon}}\,,
\end{align*}
which are precisely the crystal atoms (see~\cite[Ex.~16]{Santos19}).
\end{example}

We remark that applying $w_0$ to the weight means we have $w_0 \wt = -\wt$, and in terms of the bijection with Proctor patterns, we instead use the initial alphabet $\bon < 1 < \btw < 2$.
In this case, we are taking dual atoms from the lowest weight element and replacing $e_i \mapsto f_i$ in the definition of a Demazure crystal.
Equivalently, we are applying the Lusztig involution (or the contragradient dual) to the crystal atom.
However, this does not remove the fact we are working with reverse King tableaux unlike in~\cite{BSW20}.

\begin{example}
We note that applying tableau switching $i \leftrightarrow \ibar$ is not the same as taking the Proctor patterns for the alphabet changed under $w_0$.
Indeed, the following tableaux are fixed under the tableau switching but are interchanged using the alphabet $\bon < 1 < \btw < 2$:
\[
\ytableaushort{2{\bon},1}\,,
\qquad\qquad
\ytableaushort{21,{\bon}}\,.
\]
\end{example}

If we instead took the order of the spectral parameters from~\cite{Gray17,Ivanov12}, the weight would instead be twisted by the longest element $w_A$ of natural $S_n \subseteq W$.
Yet this would give us King tableaux as our image under the bijection $\Phi^C$.
This would mean we would have a key algorithm on King tableaux for a twisted version of atoms working with an extremal weight crystal $B(w_A \lambda)$ in the terminology of~\cite{K93}.
We have similar results for $G = \SO_{2n+1}$ and Sundaram tableaux, as well as conjecture a crystal structure that is an extension of a solution of Conjecture~\ref{conj:atom_structure}.


\appendix
\section{\textsc{SageMath} code}

We give our \textsc{SageMath}~\cite{sage} code that we used to prove Proposition~\ref{prop:YBE}.

\begin{lstlisting}
def type_C_R_matrix(gamma_L1=True, gamma_L2=True, max_m=4):
    BR = ZZ
    c = [1] * max_m

    # The auxiliary space is the horizontal lines and the quantum space
    # is the vertical line. The input the left and bottom sides.
    def L_gamma(aux_in, q_in, aux_out, q_out, z):
        if set([aux_in, q_out]) != set([aux_out, q_in]):
            return 0
        if aux_in == aux_out == q_out == q_in == 0:  # blank
            return 1
        if aux_out == q_out == 0:  # right turn corner
            return z*c[aux_in-1]
        if aux_in == q_in == 0:  # left turn corner
            return 1*c[aux_out-1]
        if q_in == q_out == 0:  # horizontal line
            return z*c[aux_in-1]
        if aux_in == aux_out == 0:  # vertical line
            return 0
        # Must be a crossing
        assert aux_in > 0 and aux_out > 0 and q_in > 0 and q_out > 0
        if aux_in == aux_out:
            if q_in == q_out == aux_in == aux_out:
                return z*c[aux_in-1]^2
            if q_in < aux_out:
                return z*c[aux_in-1]*c[q_in-1]
        elif aux_in == q_in:
            if q_in < q_out:
                return z*c[aux_in-1]*c[aux_out-1]
        return 0

    # Infinite recursion if we set L_gamma = L_gamma_mirror,
    #   so we introduce an auxiliary function name.
    L_gamma_mat = L_gamma

    def L_delta(aux_in, q_in, aux_out, q_out, z):
        if set([aux_in, q_in]) != set([aux_out, q_out]):
            return 0
        if aux_in == aux_out == q_out == q_in == 0:  # blank
            return z
        if aux_out == q_in == 0:  # right turn corner
            return 1/c[aux_in-1]
        if aux_in == q_out == 0:  # left turn corner
            return z/c[aux_out-1]
        if q_in == q_out == 0:  # horizontal line
            return 1/c[aux_in-1]
        if aux_in == aux_out == 0:  # vertical line
            return 0
        # Must be a crossing
        assert aux_in > 0 and aux_out > 0 and q_in > 0 and q_out > 0
        if aux_in == aux_out:
            if q_in == q_out == aux_in == aux_out:
                return 1/c[aux_in-1]^2
            if q_in > aux_in:
                return 1/c[aux_in-1]/c[q_in-1]
        elif aux_in == q_out:
            if q_in > q_out:
                return 1/c[aux_in-1]/c[aux_out-1]
        return 0

    if gamma_L1:
        L1_wt = L_gamma_mat
    else:
        L1_wt = L_delta
    if gamma_L2:
        L2_wt = L_gamma_mat
    else:
        L2_wt = L_delta

    states_to_vars = {(in_top,in_bot,out_top,out_bot): 0
                      for in_top in range(max_m)
                      for in_bot in range(max_m)
                      for out_top in range(max_m)
                      for out_bot in range(max_m)
                      }
    zi, zj = BR['zi,zj'].fraction_field().gens()
    base = zi.parent()
    S = PolynomialRing(base, 'x', len(states_to_vars))
    vars_to_states = []
    for i, st in enumerate(states_to_vars):
        states_to_vars[st] = S.gen(i)
        vars_to_states.append(st)
    def R_wt(in_top, in_bot, out_top, out_bot):
        return states_to_vars.get((in_top,in_bot,out_top,out_bot), 0)

    data = [list(range(max_m)), list(range(max_m)), list(range(max_m))])
    states = list(cartesian_product(data)
    print("Building matrices")

    L1 = matrix(base, [[L1_wt(s[0], s[2], t[0], t[2], zj) if s[1] == t[1]
                       else 0 for t in states] for s in states])
    L2 = matrix(base, [[L2_wt(s[1], s[2], t[1], t[2], zi) if s[0] == t[0]
                       else 0 for t in states] for s in states])
    R = matrix(S, [[R_wt(s[0], s[1], t[1], t[0]) if s[2] == t[2] else 0
                    for t in states] for s in states])
    print("Computing RLL - LLR")
    RLL = R * (L1 * L2) - (L2 * L1) * R

    print("Setting up equations")
    def extract_coeffs(p):
        d = p.dict()
        zero = p.parent().zero()
        return [d.get(gen.exponents()[0], zero) for gen in S.gens()]
    M = matrix(base, [extract_coeffs(RLL[i,j])
                      for i in range(len(states))
                      for j in range(len(states))])
    print("Computing kernel")
    ker = [b for b in M.right_kernel().basis()]
    assert len(ker) == 1
    ret = {}
    for i, val in enumerate(ker[0]):
        if val != 0:
            ret[vars_to_states[i]] = SR(val).factor()
    return ret
\end{lstlisting}

\bibliographystyle{alpha} 
\bibliography{typeC}

\end{document}